\documentclass[a4paper, cleveref, autoref, thm-restate, numberwithinsect]{lipics-v2021}
\usepackage{mathtools}
\usepackage{physics}
\usepackage{graphicx}
\usepackage[dvipsnames, table]{xcolor}
\usepackage{cite}
\usepackage{bm}
\usepackage{booktabs}
\usepackage{diagbox}
\usepackage{MnSymbol}
\usepackage{complexity}
\usepackage{subcaption}
\usepackage{tabularx}
\usepackage{thm-restate}
\usepackage{xargs}
\usepackage{ifthen}

\usepackage[capitalise, noabbrev]{cleveref}

\newsavebox{\imagebox}

\newcolumntype{Y}{>{\centering\arraybackslash}X}
\newcommand{\ra}[1]{\renewcommand{\arraystretch}{#1}}
\newcommand{\shortref}[1]{{\color{black} Thm.\,\ref{#1}}}
\newcommand{\shortrefcor}[1]{{\color{black} Cor.\,\ref{#1}}}

\nolinenumbers
\hideLIPIcs

\definecolor{red30}{RGB}{229,242,251}
\definecolor{green30}{RGB}{132,195,237}
\definecolor{openColor}{RGB}{234,234,234}

\definecolor{blue1}{rgb}{0.274,0.392,0.666}
\definecolor{blue2}{rgb}{0.529,0.607,0.784}
\definecolor{blue3}{rgb}{0.784,0.819,0.901}
\definecolor{brown1}{rgb}{0.654,0.509,0.18}
\definecolor{brown2}{rgb}{0.776,0.682,0.466}
\definecolor{brown3}{rgb}{0.898,0.854,0.752}
\definecolor{cyan1}{rgb}{0.137,0.631,0.878}
\definecolor{cyan2}{rgb}{0.439,0.76,0.921}
\definecolor{cyan3}{rgb}{0.741,0.89,0.964}
\definecolor{gray1}{gray}{0.301}
\definecolor{gray2}{gray}{0.596}
\definecolor{gray3}{gray}{0.89}
\definecolor{green1}{rgb}{0,0.588,0.509}
\definecolor{green2}{rgb}{0.352,0.733,0.682}
\definecolor{green3}{rgb}{0.701,0.878,0.854}
\definecolor{maygreen1}{rgb}{0.549,0.713,0.235}
\definecolor{maygreen2}{rgb}{0.709,0.815,0.505}
\definecolor{maygreen3}{rgb}{0.866,0.913,0.772}
\definecolor{orange1}{rgb}{0.874,0.607,0.105}
\definecolor{orange2}{rgb}{0.921,0.745,0.419}
\definecolor{orange3}{rgb}{0.964,0.882,0.733}
\definecolor{purple1}{rgb}{0.639,0.062,0.486}
\definecolor{purple2}{rgb}{0.768,0.392,0.666}
\definecolor{purple3}{rgb}{0.894,0.717,0.847}
\definecolor{red0}{rgb}{1,0.192,0.075}
\definecolor{red1}{rgb}{0.635,0.133,0.137}
\definecolor{red2}{rgb}{0.764,0.439,0.439}
\definecolor{red3}{rgb}{0.89,0.741,0.741}
\definecolor{signalblue1}{rgb}{0.227,0.463,0.941}
\definecolor{signalblue2}{rgb}{0.42,0.562,0.941}
\definecolor{signalblue3}{rgb}{0.573,0.694,0.941}
\definecolor{signalblue4}{rgb}{0.749,0.812,0.941}
\definecolor{violet1}{rgb}{0.514,0.376,0.824}
\definecolor{violet2}{rgb}{0.639,0.569,0.824}
\definecolor{violet3}{rgb}{0.776,0.737,0.906}
\definecolor{yellow1}{rgb}{0.988,0.898,0}
\definecolor{yellow2}{rgb}{0.992,0.933,0.352}
\definecolor{yellow3}{rgb}{0.996,0.968,0.701}

\colorlet{colorNPK}{red2}
\colorlet{colorNP}{red3}
\colorlet{colorP}{signalblue3}
\definecolor{colorOpen}{RGB}{234,234,234}

\definecolor[named]{lipicsGray}{rgb}{0.31,0.31,0.33}

\usepackage{enumitem}
\setlist[enumerate]{leftmargin=*, align=left, nosep}
\setlist{nosep}
\newcommand{\enumstyle}[1]{\textcolor{lipicsGray}{\sffamily\bfseries\upshape\mathversion{bold}{#1}}}
\newenvironment{myenum}[1]
	{\begin{enumerate}[label=\enumstyle{({#1\arabic*})}, ref={#1}\arabic*]}
	{\end{enumerate}}

\newcommand{\N}{\mathbb{N}}
\newcommand{\setmid}{\ensuremath{\; \colon \;}}
\DeclarePairedDelimiterX\set[1]\lbrace\rbrace{\def\given{\setmid}#1}
\DeclarePairedDelimiter\ceil\lceil\rceil

\newcommand{\by}[2]{#1{\,:\,}#2}
\newcommand{\kneser}[2]{K_{#1{:}#2}}
\newcommand{\mycielski}{\operatorname{M}}

\newcommand{\nph}{$\NP$-hard}
\newcommand{\npc}{$\NP$-complete}

\newcommand{\chif}{\chi_{\lab{f}}}
\newcommand{\chib}{\chi}

\DeclareMathOperator{\indegree}{indeg}

\newcommand{\join}{\lor}
\newcommandx{\hit}[2]{%
   \ifthenelse{ \equal{#2}{} }
      {\ensuremath{\operatorname{hit}(#1)}}
      {\ensuremath{\operatorname{hit}(#1,#2)}}
}
\newcommand{\lab}[1]{\text{\fontfamily{iwona}\selectfont #1}}
\newcommandx{\cn}[3]{%
   \ifthenelse{ \equal{#3}{} }
      {\ensuremath{\operatorname{c}_{\text{\fontfamily{iwona}\selectfont #1}}^{#2}}}
      {\ensuremath{\operatorname{c}_{\text{\fontfamily{iwona}\selectfont #1}}^{#2}(#3)}}
}
\newcommandx{\cl}[1]{\ensuremath{\operatorname{c}_{\text{\fontfamily{iwona}\selectfont l}}^{#1}}}
\newcommandx{\cg}[1]{\ensuremath{\operatorname{c}_{\text{\fontfamily{iwona}\selectfont g}}^{#1}}}

\newcommand{\calG}{\mathcal{G}}
\newcommand{\calH}{\mathcal{H}}
\newcommand{\calF}{\mathcal{F}}
\newcommand{\calS}{\mathcal{S}}
\newcommand{\calB}{\mathcal{B}}
\newcommand{\calBc}{\mathcal{B}_{\mathrm{c}}}
\newcommand{\biclique}{\calBc}
\newcommand{\calI}{\mathcal{I}}
\newcommand{\calK}{\mathcal{K}}

\newcommand{\bip}{\mathcal{B}}
\newcommand{\stars}{\mathcal{S}}
\newcommand{\compl}{\mathcal{K}}

\newcommand{\etri}{\mathcal{E}_3} 
\newcommand{\ebound}[1]{\mathcal{E}_{#1}}

\newcommand{\probglob}[1]{\textsf{global-}{#1}\textsf{-covering} problem}
\newcommand{\probloc}[1]{\textsf{local-}{#1}\textsf{-covering} problem}
\newcommand{\probglobloc}[1]{\textsf{global-} and \textsf{local-}{#1}\textsf{-covering} problem}
\newcommand{\probpartglob}[1]{\textsf{partial-global-}{#1}\textsf{-covering} problem}
\newcommand{\probpartloc}[1]{\textsf{partial-local-}{#1}\textsf{-covering} problem}

\newcommand{\probdecomp}[1]{{#1}\textsf{-decomposition} problem}

\newcommand{\glob}[1]{\textsf{global-}{#1}\textsf{-covering}}
\newcommand{\loc}[1]{\textsf{local-}{#1}\textsf{-covering}}

\newcounter{counter1}
\newcounter{counter2}
\newcounter{counter3}
\newcounter{counter4}

\author{Miriam Goetze}{Karlsruhe Institute of Technology, Germany}{miriam.goetze@kit.edu}{https://orcid.org/0000-0001-8746-522X}{funded by the Deutsche Forschungsgemeinschaft (DFG, German Research Foundation) -- 520723789}

\author{Lucas Schwebler}{Karlsruhe Institute of Technology, Germany}{}{}{}

\authorrunning{M. Goetze, L. Schwebler}
\title{On the Computational Complexity of Local and Global Covering Numbers}

\newtheorem{question}[theorem]{Question}
\crefname{question}{Question}{Questions}
\Crefname{question}{Question}{Questions}
\crefname{conjecture}{Conjecture}{Conjectures}
\Crefname{conjecture}{Conjecture}{Conjectures}
\theoremstyle{definition}

\ccsdesc[100]{Mathematics of computing~Graph theory}

\keywords{covering numbers, complexity, bipartite, fractional chromatic number}

\begin{document}

\maketitle

\begin{abstract}
    The global and local $\calG$-covering number~$\cg{\calG}(H)$ and~$\cl{\calG}(H)$ encode how well the edges of a graph~$H$ can be covered with graphs from a graph class~$\calG$: in the global setting, we minimize the number of graphs from~$\calG$ required, in the local setting how often a vertex is hit by the graphs of the cover.
    Within this work we consider for~$\calG$ the graph classes~$\calB$ of all bipartite and $\calBc$ of all complete bipartite graphs.
    We give a tight lower bound on $\cl{\calB}(H)$ in terms of the fractional chromatic number of~$H$, thereby giving a local analogue of a result by Harary, Hsu and Miller.
    Answering a question by Fishburn and Hammer, we show that it is $\NP$-hard to determine $\cl{\calBc}(H)$.
    Further, we provide a finite and  monotone graph class~$\calG$ such that $\cg{\calG}(H)$ can be computed in constant time for every graph~$H$ while determining~$\cl{\calG}(H)$ is $\NP$-hard.
    This yields a natural example to a question raised by Knauer and Ueckerdt.
\end{abstract}

\section{Introduction}

Covering the edges of a given graph~$H$ with graphs of a class~$\calG$ permits to decompose~$H$ into simpler pieces.
We call such a cover a~$\calG$-cover (see \cref{par:covers} for formal definitions and \cref{fig:examples_G-cover-k7} for an example).
In general, we are interested in minimizing the number of graphs from~$\calG$ in such a cover.
The smallest value that can be achieved is called the \emph{global $\calG$-covering number}~$\cn{g}{\calG}{H}$ of~$H$.
From an algorithmic point of view $\calG$-covers where $\calG$ corresponds to the class~$\calBc$ of all complete bipartite graphs are of interest, as such covers can be interpreted as a compression of~$G$.
Here, in particular when we are given a $\calBc$-cover of a graph~$H$ with small \emph{weighted size}  (that is the sum of vertices over all graphs of the cover) there are significant improvements upon the runtime for computing the vertex and the edge connectivity of general~$H$ and for computing a maximum matching when~$H$ is bipartite, see \cref{overview:algorithmic} for an overview of related results.

In a $\calBc$-cover~$\varphi\colon B_1 \cupdot \dots \cupdot B_t \to H$ of small weighted size of an $n$-vertex graph~$H$, a single vertex~$v \in V(H)$ may still lie in many of the graphs~$B_i$ of the cover. 
That is, adjacency testing for such compressions of~$H$ (e.g. \emph{adjacency labelings}) may require $\mathcal{O}(n\log(n))$ time \cite{cardinalImplicitRepresentationsPolynomial2026}.
Yet, if each vertex of~$H$ only belongs to at most~$t$ of the graphs~$B_i$, few bits are required to represent each vertex and adjacency can be tested in~$\mathcal{O}(t\log(n))$ time.
Note that here, the~$\calBc$-cover may consist of many graphs~$B_i$, we only need it to be locally small, i.e., that each vertex is contained in few graphs of the cover.
In general, the smallest~$t$ such that there exists a $\calG$-cover of~$H$ where each vertex is only contained in at most~$t$ graphs of the cover is called the \emph{local $\calG$-covering number}~$\cn{l}{\calG}{H}$, see \cref{fig:examples_G-cover-shift} for an example.

The aim of this paper is to bridge the gap between known results for computing global and local covering numbers.
Global $\calG$-covering numbers have been well-studied for various graph classes~$\calG$ (see \cite{schwartzOverviewGraphCovering2022} for an overview), such as
\begin{itemize}
    \item bipartite and complete bipartite graphs \cite{hararyBiparticityGraph1977,fishburn1996BipartiteDimension,pintoBicliqueCoversPartitions2014,wattsFractionalBicliqueCovers2006a,juknaCoveringGraphsComplete2009,tuzaCoveringGraphsComplete1984}
    \item complete graphs \cite{bollobasCliqueCoveringsEdges1993,brighamUpperBoundsEdge1984a,erdosRepresentationGraphSet1966} 
    \item forests, star forests, caterpillar forests and linear forests \cite{nash-william1964,knauer2016threeways,goncalvesCaterpillarArboricityPlanar2007,goncalvesStarCaterpillarArboricity2009}
    \item outerplanar and planar graphs \cite{mutzelThicknessGraphsSurvey1998,goncalvesEdgePartitionPlanar2005}
    \item interval graphs \cite{goncalvesCaterpillarArboricityPlanar2007,goncalvesStarCaterpillarArboricity2009,kostochkayEveryOuterplanarGraph1999} \footnote{Gon{\c c}alves and Orchem \cite{goncalvesCaterpillarArboricityPlanar2007,goncalvesStarCaterpillarArboricity2009} work on $\calF$-covers where $\calF$ is the class of all caterpillar forests. As the caterpillar forests are contained in the class~$\calI$ of interval graphs, the upper bounds on the global $\calF$-covering number translate to global $\calI$-covering numbers.}
\end{itemize}
For an overview of known results for computing global covering numbers see \cref{tab:overview}.
Yet, in comparison, little is known about bounds on local covering numbers and the computational complexity in the local setting, see \cref{overview:local} for an overview.

\subsection{Contribution}

In this work we continue the study of the complexity of computing local $\calG$-covering numbers, thereby answering open questions raised by Fishburn and Hammer \cite[p.\,148]{fishburn1996BipartiteDimension}, and Knauer and Ueckerdt \cite[Question~26]{knauer2016threeways}.

\begin{table}[ht]
	\ra{1.3}
	\def\smallDist{3}
	\def\largeDist{8}
	\small
	\centering
	\begin{subtable}[t]{\textwidth}
		\centering
		\begin{tabularx}{\textwidth}{c *{4}{Y}}
			\toprule
			\arrayrulecolor{white}
			& \multicolumn{1}{Y}{forests} & \multicolumn{1}{Y}{star forests} & \multicolumn{1}{Y}{caterpillar forests} & \multicolumn{1}{Y}{linear forests} \\
			global & \multicolumn{1}{|c|}{\cellcolor{colorP} \cite{gabowForestsFramesGames1992,gabowAlgorithmsGraphicPolymatroids1998}} &
			\multicolumn{1}{|c|}{\cellcolor{colorNPK} \cite{hakimiStarArboricityGraphs1996, goncalvesStarCaterpillarArboricity2009}} & \multicolumn{1}{|c|}{\cellcolor{colorNPK} \cite{goncalvesStarCaterpillarArboricity2009, schermerRectangleVisibilityGraphs1996}} & \multicolumn{1}{|c|}{\cellcolor{colorNPK} \cite{perocheNPcompletenessProblemsPartitioning1984}} \\
			\cmidrule{1-3}
			local &  \multicolumn{1}{|c|}{\cellcolor{colorP} \cite{knauer2016threeways}\footnotemark
\setcounter{counter1}{\value{footnote}}} &
			\multicolumn{1}{|c|}{\cellcolor{colorP} \cite{knauer2016threeways}} & \multicolumn{1}{|c|}{\cellcolor{colorOpen} ? \cite[p.\,756]{knauer2016threeways}} & \multicolumn{1}{|c|}{\cellcolor{colorNPK}\cite{stumpf2015CoveringNumbersDifferent}}\\
			\arrayrulecolor{black}\bottomrule

			\addlinespace
            \toprule
			\arrayrulecolor{white}
			& \multicolumn{1}{Y}{bounded stars $\stars_{\leq d}$ for $d \geq 3$} & \multicolumn{1}{Y}{$\{G\}$ for connected $r$-regular $G$, $r \geq 2$} & \multicolumn{1}{Y}{$\{K_{1,d}\}$ for $d \geq 3$} & \multicolumn{1}{Y}{at most three edges $\etri$} \\
			global & \multicolumn{1}{|c|}{\cellcolor{colorNP}\shortref{thm:fin-glob-maxe-np}} &  \multicolumn{1}{|c|}{\cellcolor{colorNP}\shortref{thm:fin-glob-maxe-np}} & \multicolumn{1}{|c|}{\cellcolor{colorNP}\shortref{thm:fin-glob-maxe-np}} & \multicolumn{1}{|c|}{\cellcolor{colorP}\shortref{thm:global-easy-local-hard}} \\
			\cmidrule{1-3}
			local & \multicolumn{1}{|c|}{\cellcolor{colorP}\shortref{thm:fin-local-bounded-star}} & \multicolumn{1}{|c|}{\cellcolor{colorNP}\cite{schweblerGraphCoveringAlgorithnms2026}} & \multicolumn{1}{|c|}{\cellcolor{colorNP}\cite{schweblerGraphCoveringAlgorithnms2026}} & \multicolumn{1}{|c|}{\cellcolor{colorNP}\shortref{thm:global-easy-local-hard}} \\
			\arrayrulecolor{black}\bottomrule

			\addlinespace
            \toprule
			\arrayrulecolor{white}
			& \multicolumn{1}{Y}{planar graphs} & \multicolumn{1}{Y}{outerplanar graphs} & \multicolumn{1}{Y}{cacti} & \multicolumn{1}{Y}{treewidth at most $t$ for $t \geq 2$} \\
			global & \multicolumn{1}{|c|}{\cellcolor{colorNPK}\cite{mansfieldDeterminingThicknessGraphs1983,lee2026determiningOuterthickness}{\protect\footnotemark\setcounter{counter4}{\value{footnote}}}} &  \multicolumn{1}{|c|}{\cellcolor{colorNPK}\cite{lee2026determiningOuterthickness}{\protect\footnotemark[\value{counter4}]}} & \multicolumn{1}{|c|}{\cellcolor{colorNPK}\cite{lee2026determiningOuterthickness}{\protect\footnotemark[\value{counter4}]}} & \multicolumn{1}{|c|}{\cellcolor{colorNPK} \cite{lee2026determiningOuterthickness}{\protect\footnotemark[\value{counter4}]}} \\
			\cmidrule{1-3}
			local & \multicolumn{1}{|c|}{\cellcolor{colorNPK} \cite{schweblerGraphCoveringAlgorithnms2026}{\protect\footnotemark[\value{counter4}]}} & \multicolumn{1}{|c|}{\cellcolor{colorNPK} \cite{schweblerGraphCoveringAlgorithnms2026}{\protect\footnotemark[\value{counter4}]}} & \multicolumn{1}{|c|}{\cellcolor{colorNPK}\cite{schweblerGraphCoveringAlgorithnms2026}{\protect\footnotemark[\value{counter4}]}} & \multicolumn{1}{|c|}{\cellcolor{colorNPK}\cite{schweblerGraphCoveringAlgorithnms2026}{\protect\footnotemark[\value{counter4}]}} \\
			\arrayrulecolor{black}\bottomrule

			\addlinespace
            \toprule
			\arrayrulecolor{white}
			& \multicolumn{1}{Y}{bipartite graphs} & \multicolumn{1}{Y}{bicliques $\calBc$} & \multicolumn{1}{Y}{cliques $\compl$} & interval graphs  \\
			global & \multicolumn{1}{|c|}{\cellcolor{colorNPK}\cite{hararyBiparticityGraph1977,karp1972reducibility}{\protect\footnotemark\setcounter{counter2}{\value{footnote}}}}& \multicolumn{1}{|c|}{\cellcolor{colorNP}\shortref{thm:global-clique-bc-np}, \cite{orlin1977contentment,mullerEdgePerfectnessClasses1996}} &
			\multicolumn{1}{|c|}{\cellcolor{colorNP}\shortref{thm:global-clique-bc-np}, \cite{orlin1977contentment,kouCoveringEdgesCliques1978}} &  \multicolumn{1}{|c|}{\cellcolor{colorNPK} \cite{jiangRecognizingDIntervalGraphs2013}} \\
			\cmidrule{1-3}
			local & \multicolumn{1}{|c|}{\cellcolor{colorNPK}\shortref{thm:local-bip-np}} & \multicolumn{1}{|c|}{\cellcolor{colorNP}\shortref{thm:local-bc-np}} &
			\multicolumn{1}{|c|}{\cellcolor{colorNPK}\shortrefcor{thm:local-clique-np}, \cite{poljak1981complexityRepresentation}{\protect\footnotemark\setcounter{counter3}{\value{footnote}}}} & \multicolumn{1}{|c|}{\cellcolor{colorNPK} \cite{stumpf2015CoveringNumbersDifferent}}  \\
			\arrayrulecolor{black}\bottomrule
		\end{tabularx}
		\medskip
		\caption{\;\; \textcolor{colorP}{$\blacksquare$} in \P \;\; \textcolor{colorNP}{$\blacksquare$} weakly $\NP$-complete\footnote{To our knowledge, it is not known whether these problems are also strongly $\NP$-complete.}\;\; \textcolor{colorNPK}{$\blacksquare$} strongly $\NP$-complete \;\; \textcolor{colorOpen}{$\blacksquare$} Unknown}
	\end{subtable}
	\caption{
		Overview of the main complexity results obtained in this paper.
		The cells correspond to the \glob{$\calG$} and \loc{$\calG$} problem where $\calG$ depends on the column.
		In these problems, $k$ is given in the input.
		If we show $\NP$-hardness for some fixed $k$, the cell is marked accordingly.
	}
	\label{tab:overview}
\end{table}
\footnotetext[\value{counter1}]{Knauer and Ueckerdt show that the local and global $\calF$-covering number for the class~$\calF$ of all forests coincide \cite[Theorem~8]{knauer2016threeways}. The local version can thus be reduced to the global variant.}
\footnotetext[\value{counter4}]{Lee, Liu and Tsai show that the $k$-\probglob{$\calG$} is $\NP$-complete in all these cases for all~$k \geq 3$ \cite{lee2026determiningOuterthickness} Adapting their technique to the local setting, Schwebler yields the same result in the local setting \cite{schweblerGraphCoveringAlgorithnms2026}.}
\footnotetext[\value{counter2}]{Harary, Hsu and Miller relate the global $\calB$-covering number of a graph~$H$ to its chromatic number~$\chi(H)$ (cf. \cref{lem:cover_with_log_chi_bipartite_graphs}). As determining the chromatic number is $\NP$-complete \cite{karp1972reducibility}, so is the \probglob{$\calB$}.}
\footnotetext[\value{counter3}]{We only prove that the \probloc{$\compl$} is $\NP$-complete. Poljak, Rödl and Turzík show the stronger result that the $k$-\probloc{$\compl$} is $\NP$-complete for every fixed $k \geq 4$, while it can be solved in polynomial time for~$k \leq 2$ \cite{poljak1981complexityRepresentation}.}

\subparagraph*{A lower bound on local bipartite covering numbers.}

For the class~$\calB$ of all bipartite graphs the global $\calB$-covering number~$\cn{g}{\calB}{}$ is well-understood: it only depends on the chromatic number of the host graph~$H$.
\begin{theorem}[{Harary, Hsu, Miller~\cite{hararyBiparticityGraph1977}}] 
    \label{lem:cover_with_log_chi_bipartite_graphs}
    Let~$\calB$ be the class of all bipartite graphs.
    For every graph~$H$, we have $\cn{g}{\calB}{H} = \big\lceil\log(\chi(H))\big\rceil$.
\end{theorem}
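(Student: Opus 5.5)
We prove the two inequalities $\cn{g}{\calB}{H} \le \lceil\log(\chi(H))\rceil$ and $\cn{g}{\calB}{H} \ge \lceil\log(\chi(H))\rceil$ separately, in both directions translating between proper colorings and binary codes. Logarithms are base $2$. We assume $H$ has at least one edge, since otherwise both sides are $0$ (with the convention that the empty cover is allowed).

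\emph{Upper bound.} Let $k=\chi(H)$ and $t=\lceil\log k\rceil$, and fix a proper coloring $c\colon V(H)\to\{0,\dots,k-1\}$. Write each color in binary using $t$ bits, so that distinct colors receive distinct bit strings of length $t$. For $i=1,\dots,t$, let $B_i$ be the subgraph of $H$ consisting of all edges $uv$ whose endpoints differ in the $i$-th bit of their colors. Each $B_i$ is bipartite, with parts given by the vertices whose color has $i$-th bit $0$ and those with $i$-th bit $1$. Every edge $uv$ satisfies $c(u)\neq c(v)$, so the two codes differ in some bit, and $uv$ lies in some $B_i$. Hence $B_1,\dots,B_t$ form a $\calB$-cover of $H$ with $t$ graphs.

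\emph{Lower bound.} Suppose $\varphi\colon B_1\cupdot\dots\cupdot B_t\to H$ is a $\calB$-cover. For each $i$, fix a proper $2$-coloring $f_i\colon V(B_i)\to\{0,1\}$. Define $f_i(v)=0$ for vertices $v$ of $H$ not in the image of $B_i$. A slight care is needed because $\varphi$ is only a homomorphism-type cover, so a vertex of $H$ may have several preimages in $B_i$. To handle this, first replace each $B_i$ by its image $\varphi(B_i)\subseteq H$ restricted to the covered edges. The main point to check is that this image is still bipartite. If the cover maps are injective, as with subgraph covers, this is immediate. Otherwise, I would use that in this paper's definition an edge of $H$ is covered by an edge of some $B_i$, and take instead the subgraph of $H$ formed by the edges $uv$ with $u,v$ distinguished under a chosen coloring. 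I expect that under the paper's formal definition (\cref{par:covers}) each $B_i$ maps injectively, so that this step is routine.

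Now assign to each vertex $v$ the vector $(f_1(v),\dots,f_t(v))\in\{0,1\}^t$. If $uv$ is an edge of $H$, it is covered by some $B_i$, and then $f_i(u)\ne f_i(v)$. So this assignment is a proper coloring of $H$ with at most $2^t$ colors, giving $\chi(H)\le 2^t$, that is, $t\ge\lceil\log\chi(H)\rceil$.

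Combining the two inequalities proves \cref{lem:cover_with_log_chi_bipartite_graphs}. The only potentially delicate step is the well-definedness of $f_i$ on $V(H)$ in the lower bound, which depends on the precise notion of cover.
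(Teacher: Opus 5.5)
Your proof is correct. It is the standard binary-encoding argument. The paper does not prove this statement itself: it cites Harary, Hsu and Miller and uses the result as a black box.

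Your lower bound is the integral, global special case of the product-coloring argument the paper gives for \cref{thm:fractional-chromatic-cover}. The one difference is how vertices outside a guest are treated. In the paper, a vertex not hit by $G_i$ receives \emph{all} of $[a_i]$ in coordinate $i$, whereas you assign a fixed default value $f_i(v)=0$. Your default is harmless here, because the palette $\{0,1\}^t$ has size $2^t$ however often a vertex is hit. The paper's choice instead makes the ratio $\abs{A}/\abs{\Phi(v)}$ depend only on the number of guests hitting $v$, which is what produces a \emph{local} bound.

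On the step you flag as delicate: the paper's definition requires $\varphi$ to be injective on each guest. So $\varphi(B_i)$ is a bipartite subgraph of $H$, and transporting the $2$-coloring is routine. Your suggested fallback for non-injective covers would not work, and no repair can. Without injectivity the statement is false: the projection from the bipartite double cover $H \times K_2$ onto $H$ is an edge-surjective homomorphism from a single bipartite graph. That would give global covering number at most $1$ for every graph with an edge. You should simply drop that hedge and rely on injectivity.
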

For local $\calB$-covering numbers no similar relationship is known. 
In fact, the local $\calB$-covering number and the chromatic number can be arbitrarily far apart \cite[Proposition~21]{goetze2025boundedness}.
We present a lower bound on the local $\calB$-covering number in what is known as the \emph{fractional chromatic number}~$\chif$ (see \cref{par:fractional_number} for a definition).
\begin{restatable}{theorem}{fractionalChromaticCoverBip}
	\label{thm:fractional-chromatic-cover-bip}
	For the class~$\calB$ of all bipartite graphs we have $\cn{l}{\calB}{H} \geq \ceil{\log_2(\chif(H))}$ for every graph~$H$.
\end{restatable}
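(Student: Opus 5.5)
Given a $\calB$-cover of $H$ in which every vertex lies in at most $t$ bipartite graphs, the plan is to build a fractional colouring of $H$ of total weight at most $2^t$. This gives $\chif(H) \le 2^t$, and since $t$ is an integer, $t \ge \ceil{\log_2(\chif(H))}$. Fix such a cover $\varphi\colon B_1 \cupdot \dots \cupdot B_m \to H$ and let $I(v)\subseteq [m]$ be the set of indices $i$ with $v \in \varphi(B_i)$, so $|I(v)|\le t$. We may assume every vertex of $H$ has degree at least one; isolated vertices only change $\chif$ trivially.

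\textbf{Step 1: a probabilistic construction.} Fix a proper $2$-colouring $\sigma_i$ of each $B_i$, which exists since $B_i$ is bipartite. A vertex $v$ may be hit by several vertices of $B_i$, since $\varphi$ need not be injective within $B_i$. To handle this, we first take a subgraph cover where each edge of $H$ is assigned to one copy and $\varphi$ restricted to each $B_i$ is injective on vertices. If necessary we pass to the image $\varphi(B_i)$, which we would need to argue stays bipartite, or, more simply, we take the cover to consist of subgraphs of $H$, as is presumably the definition in \cref{par:covers}. Then $\sigma_i(v)\in\{0,1\}$ is well defined for $v\in I$-relevant pairs. Independently for each $i$, draw a uniform bit $r_i$. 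Say $v$ \emph{survives} if $\sigma_i(v)=r_i$ for every $i\in I(v)$. The set $S$ of surviving vertices is independent: for an edge $uv$ covered by some $B_i$ we have $\sigma_i(u)\ne\sigma_i(v)$, so $u$ and $v$ cannot both agree with $r_i$.

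\textbf{Step 2: bounding the fractional chromatic number.} Each vertex survives with probability $2^{-|I(v)|}\ge 2^{-t}$. Using the LP (or probabilistic) characterisation of $\chif$, namely $\chif(H)\le 1/p$ whenever some distribution over independent sets puts every vertex in the set with probability at least $p$, we get $\chif(H)\le 2^t$. If \cref{par:fractional_number} defines $\chif$ via weights on independent sets, we instead assign each realized set $S$ weight $2^{t}\cdot\Pr[S]$. This is a fractional colouring of total weight $2^t$, and each vertex receives weight $\ge 1$.

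\textbf{Expected obstacle.} The main subtlety is the definition of a cover. If the cover homomorphisms allow a vertex of $H$ to have several preimages in one $B_i$ with different colours, the survival event is not well defined. I would handle this by noting that $\varphi(B_i)$ is then still a subgraph whose edges are covered, and choose the convention from \cref{par:covers} that $\varphi$ is injective on each component, or that $\cn{l}{\calB}{}$ counts preimages, in which case $v$ counts each preimage. In the latter case I would merge preimages by picking one and restricting $B_i$ to the edges of $H$ it must cover. Any vertex counted multiple times only makes $|I(v)|$ smaller relative to the local count, so the bound still holds. The rest is routine.
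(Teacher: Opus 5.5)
Your proof is correct and is essentially the paper's argument. The paper proves the general \cref{thm:fractional-chromatic-cover} by giving each vertex $v$ a product colour set in $A=[a_1]\times\dots\times[a_t]$. For bipartite guests ($a_i=2$, $b_i=1$) this set is exactly the set of bit-vectors $(r_i)$ for which $v$ survives in your construction, so your random independent sets are precisely its colour classes.

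The only difference is in the last step. The paper trims all colour sets to a common size and so obtains an $\by{\abs{A}}{b}$-colouring directly, whereas you invoke the LP/probabilistic characterisation of $\chif$; with your uniform distribution over $\{0,1\}^m$ the two are immediately equivalent.

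Your ``expected obstacle'' is moot: the paper's covers are by definition injective on each guest, so $\sigma_i(v)$ is well defined.
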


From \cref{lem:cover_with_log_chi_bipartite_graphs} we obtain $\cn{g}{\calBc}{K_n} = \ceil{\log(n)}$ for every complete graph~$K_n$ as we may assume that all guests of a $\calB$-cover of~$K_n$ are complete bipartite graphs. 
Fishburn and Hammer ask whether we also have $\cn{l}{\calBc}{K_n} = \ceil{\log(n)}$ \cite[p.\,147]{fishburn1996BipartiteDimension}. 
Yet, even before they raised the question this has been known to be true due to results by Hansel \cite{hansel1964NombreMinimal} and Krichevskii \cite{Krichevskii1963ComplexityOC}.
Later on, more proofs using different techniques have been obtained:
a proof due to Radhakrishnan using the probabilistic method \cite{radhakrishnanen2001tropy} is presented in {\cite[Lemma 3.7]{jukna2013Complexity}}.\footnote{Jukna and Sergeev give a lower bound on the weighted size~$\sum_{v \in V(H)} \abs{\varphi^{-1}(v)}$ of a $\calBc$-cover~$\varphi$ for any graph~$H$.} 
Dong and Liu give an explicit proof \cite[Theorem~3.1]{dong2017decompositionIntoCompleteBipartite} which characterizes the structure of optimal local $\calBc$-covers of~$K_n$.\footnote{They give a full proof for $\calBc$-decompositions of~$K_n$ and note how the proof can be altered in order to obtain the corresponding result for $\calBc$-covers.} 
\begin{restatable}{theorem}{DongLiuClCgCompbipKn}
	\label{thm:local-compbip-kn}
	For the class $\calBc$ of all complete bipartite graphs, we have $\cn{l}{\calBc}{K_n} = \ceil{\log_2(n)}$.
\end{restatable}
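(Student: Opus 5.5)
The upper bound is a direct construction. Label the vertices of~$K_n$ by distinct binary strings of length $k = \ceil{\log_2(n)}$. For each coordinate $i \in \{1,\dots,k\}$ let $B_i$ be the complete bipartite graph whose two parts are the vertices with $i$-th bit $0$ and those with $i$-th bit $1$. Any two distinct vertices differ in some bit, so every edge of~$K_n$ lies in some~$B_i$. Each vertex lies in at most $k$ of the~$B_i$. Hence $\cn{l}{\calBc}{K_n} \le \cn{g}{\calBc}{K_n} \le \ceil{\log_2(n)}$, which is also the bound obtained from \cref{lem:cover_with_log_chi_bipartite_graphs}.

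For the lower bound I would use \cref{thm:fractional-chromatic-cover-bip}. Every complete bipartite graph is bipartite, so any $\calBc$-cover is a $\calB$-cover, and therefore $\cn{l}{\calBc}{K_n} \ge \cn{l}{\calB}{K_n}$. Since $\chif(K_n) = n$ (every independent set of $K_n$ is a single vertex), \cref{thm:fractional-chromatic-cover-bip} gives $\cn{l}{\calB}{K_n} \ge \ceil{\log_2 n}$, and the two bounds match.

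If I instead had to argue from scratch, I would use the Hansel/Radhakrishnan weighting argument. Take a $\calBc$-cover $B_1,\dots,B_t$ with parts $(X_j,Y_j)$, where each vertex lies in at most $s$ of the~$B_j$. Independently for each $j$, choose a side $X_j$ or $Y_j$ uniformly at random, and let $S$ be the set of vertices $v$ such that every $B_j$ containing $v$ chose the side that contains~$v$. Two vertices of $S$ cannot be joined by an edge of some~$B_j$, because they would then lie on opposite sides of that~$B_j$ and only one side was chosen. So $S$ is independent in $K_n$, which gives $|S| \le 1$. On the other hand, $\Pr[v \in S] = 2^{-d(v)} \ge 2^{-s}$, where $d(v)$ is the number of~$B_j$ containing~$v$. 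Therefore
\[
1 \ge \mathbb{E}\,|S| \ge n\,2^{-s},
\]
so $s \ge \log_2 n$, and since $s$ is an integer, $s \ge \ceil{\log_2 n}$.

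The main obstacle in this self-contained route is showing that the random set $S$ is independent. This holds because the cover uses \emph{complete} bipartite graphs with a fixed bipartition each, so every edge they cover goes between the two chosen-side classes. The same argument in the form $\sum_v 2^{-d(v)} \le \alpha(H)$ is essentially how I would expect \cref{thm:fractional-chromatic-cover-bip} to be proved via fractional colourings.
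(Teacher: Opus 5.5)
Your proof is correct. The lower bound is exactly the paper's argument: \cref{thm:fractional-chromatic-cover-bip} with $\chif(K_n)=n$ and $\calBc\subseteq\calB$.

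The difference is in the upper bound. The paper does not construct a cover. It cites Harary--Hsu--Miller for $\cg{\calB}(K_n)=\ceil{\log_2 n}$. It then shows $\cg{\calBc}(K_n)=\cg{\calB}(K_n)$ by replacing each bipartite guest with bipartition $(X_i,Y_i)$ by the complete bipartite graph between $X_i$ and $Y_i$, which is available in $K_n$. Your binary-labelling cover produces complete bipartite guests directly. It therefore needs neither the cited theorem nor the completion step; in effect it is the upper-bound half of Harary--Hsu--Miller specialised to $K_n$.

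Your self-contained random argument is also correct, and it is closer to the paper than your closing remark suggests. For bipartite guests, the product colouring in the proof of \cref{thm:fractional-chromatic-cover} has colour set $[2]^t$. The colour class of $x\in[2]^t$ is precisely your set $S$ for the outcome $x$ of the side choices. The paper uses the whole family of these independent sets as a fractional colouring, which gives $\chif(H)\le 2^k$. Passing only to $\mathbb{E}\abs{S}$ gives $k\ge\log_2(\abs{V(H)}/\alpha(H))$. This is weaker in general, since $\abs{V(H)}/\alpha(H)\le\chif(H)$, but the two coincide for $K_n$. Also, completeness of the guests plays no role in that argument: a fixed bipartition of each guest suffices, so it bounds $\cl{\calB}(K_n)$ as well.
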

With \cref{thm:fractional-chromatic-cover-bip}, we easily derive an alternative proof of \cref{thm:local-compbip-kn}.

\subparagraph*{Complexity of the local \texorpdfstring{$\bm{\calB}$}{B}- and \texorpdfstring{$\bm{\calBc}$}{Bc}-covering problem.}

As the global $\calB$-covering number $\cg{\calB}(H)$ for the class~$\calB$ of all bipartite graphs is determined by the chromatic number (cf. \cref{lem:cover_with_log_chi_bipartite_graphs}), the \probglob{$\calB$} is $\NP$-complete \cite{karp1972reducibility}.
Yet, while \cref{thm:fractional-chromatic-cover-bip} provides a tight lower bound on~$\cl{\calB}(H)$ in the fractional chromatic number~$\chif(H)$ of~$H$, the two parameters~$\cl{\calB}(H)$ and~$\chif(H)$ are not related by a function (see \cref{thm:fractional-chromatic-cover-bip-not-tight}).
Thus, even though determining whether a graph~$H$ has fractional chromatic number at most~$k$ is $\NP$-hard \cite{grotschelEllipsoidMethodIts1981}, this does not immediately yield $\NP$-hardness of the \probloc{$\calB$}.
We show $\NP$-completeness via a reduction from the $3$-\textsf{vertex}-\textsf{coloring} problem (that is, decide whether~$\chi(H) \leq 3$ for a given graph~$H$).
\begin{restatable}{theorem}{localBcoveringNPC}
	\label{thm:local-bip-np}
	The $2$-\probloc{$\bip$} is $\NP$-complete.
\end{restatable}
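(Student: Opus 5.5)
Membership in $\NP$ is routine. We may assume every guest of a $\calB$-cover is a subgraph of~$H$, and that there are at most $\abs{E(H)}$ guests, since each guest only needs to contribute one edge. A certificate is therefore a list of at most $\abs{E(H)}$ bipartite subgraphs together with their bipartitions. We can check in polynomial time that every edge is covered and that every vertex lies in at most two guests.

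For hardness we reduce from $3$-\textsf{vertex}-\textsf{coloring}. Given $G$, we build $H$ such that $\cl{\calB}(H)\le 2$ holds if and only if $\chi(G)\le 3$. As a first candidate I would take the join $H = G \join K_1$ with an apex~$a$.

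The easy direction uses \cref{lem:cover_with_log_chi_bipartite_graphs}. If $\chi(G)\le 3$, then $\chi(H)\le 4$, so $\cg{\calB}(H)\le 2$ and hence $\cl{\calB}(H)\le 2$.

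For the converse, fix a local cover in which every vertex lies in at most two guests. Let $B_1,B_2$ be the guests containing~$a$. Each vertex $v$ of~$G$ must lie in $B_1$ or $B_2$ on the side opposite to~$a$. Let $D$ be the set of vertices opposite to $a$ in both $B_1$ and $B_2$, and let $X_i$ be the set opposite to $a$ only in $B_i$.

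Two vertices of~$D$ have no guest in which they lie on opposite sides, so $D$ is independent. A vertex $v\in X_1$ has exactly one further slot $C_v$. Consider an edge $uv$ inside $X_1$. It cannot be covered by $B_1$, because both endpoints sit on the same side there. It also cannot be covered by $B_2$ unless one endpoint uses its free slot for $B_2$ on $a$'s side. So every such edge is covered by a guest occupying the free slot of both endpoints. The same holds for edges between $X_1$ and $X_2$, and for edges from $D$ to the $X_i$, which would need $B_1$ or $B_2$ with the $X$-vertex on $a$'s side.

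From this bookkeeping I would read off a colouring of $G$ by the pair (which $B_i$, plus one bit from the free guest). The aim is to show that the sides of the free guests are forced to merge classes so that only three colours remain.

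The main obstacle is exactly this converse. A naive reading of the slots gives something like $X_1$ bipartite, $X_2$ bipartite and $D$ independent, that is five colours rather than three. Consistent with \cref{thm:fractional-chromatic-cover-bip}, the join alone probably only certifies fractional-chromatic-type constraints.

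If the single apex is too weak, the first fix I would try is to strengthen the gadget. One option is to join $G$ with a small fixed graph, such as an edge $ab$ or an odd cycle, whose own local cover is rigid and forces all of $B_1,B_2$ to be used in a prescribed way. Another is to replace each vertex of~$G$ by a small gadget that exhausts both slots. In either case the goal is that the free slots disappear and every vertex of~$G$ gets one of the pairs (side in $B_1$, side in $B_2$), with one pair forbidden by the gadget. Edges of~$G$ would then have to be covered by $B_1$ or $B_2$, and the three remaining pairs give a proper $3$-colouring.

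I would first establish a rigidity lemma for the chosen gadget, namely that in every cover with at most two guests per vertex its two guests are forced. After that, both directions of the reduction follow from the case analysis above.
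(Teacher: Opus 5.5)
The NP-membership argument, the reduction from $3$-colouring via $G\join K_1$, and the easy direction are all correct. The single apex is exactly the construction the paper uses, and nothing stronger is needed.

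The gap is the converse, which is the whole content of the hardness proof. You never show that a $2$-local $\calB$-cover of $G\join a$ yields a proper $3$-colouring of $G$. You stop at a five-colour count, conjecture that the construction is too weak, and switch to unspecified gadgets and a ``rigidity lemma'' that is neither stated nor proved. As written, no reduction is established. The conjecture is also false: the paper uses $\mycielski(C_5)\join w$ precisely to show that $\cl{\calB}(H)$ can exceed $\lceil\log_2\chif(H)\rceil$. So the apex does detect non-$3$-colourability, not just fractional constraints.

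The missing idea is that the free slots are correlated. First merge the remaining guests into one:
Every vertex, including $a$, already lies in $B_1$ or $B_2$, so all remaining guests are pairwise vertex-disjoint. Their union is therefore bipartite and can be replaced by a single guest $B_3$. After adding isolated vertices if needed, every vertex of $G$ lies in exactly two of $B_1,B_2,B_3$.

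Next, record a pattern for each vertex. Let $p(v)\in\{0,1,\star\}^3$, where $p_i(v)$ is the side of $v$ in $B_i$, $a$ is on side $0$ of $B_1$ and $B_2$, and $\star$ means $v\notin V(B_i)$. Exactly one entry is $\star$, and $p_1(v)=1$ or $p_2(v)=1$, so only seven patterns occur:
your $D$ is $(1,1,\star)$; your $X_1$ splits into $(1,0,\star),(1,\star,0),(1,\star,1)$; your $X_2$ splits into $(0,1,\star),(\star,1,0),(\star,1,1)$.

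An edge $uv$ requires some $i$ with $\{p_i(u),p_i(v)\}=\{0,1\}$. A direct check shows that none of the classes $\{(0,1,\star),(\star,1,0)\}$, $\{(1,0,\star),(1,\star,0)\}$, $\{(1,1,\star),(1,\star,1),(\star,1,1)\}$ contains such a pair, so these give a $3$-colouring.

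This is the merging you were looking for:
$D$ absorbs the side-$1$ vertices of both $X_1$ and $X_2$, because with one common bipartition of $B_3$ no guest puts two of these vertices on opposite sides. Inside $X_i$, the vertices on $a$'s side of $B_{3-i}$ join the side-$0$ vertices.

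A small correction along the way: $B_2$ can never cover an edge inside $X_1$, not even via a free slot, since no vertex of $X_1$ lies opposite $a$ in $B_2$.
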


Fishburn and Hammer ask whether the \probglobloc{$\calBc$} are $\NP$-complete for the class~$\calBc$ of all complete bipartite graphs \cite[p.\,148]{fishburn1996BipartiteDimension}.
This has been shown to be the case in the global setting by Orlin, and independently by Müller \cite{mullerEdgePerfectnessClasses1996,orlin1977contentment}: Orlin proves that the \probglob{$\calBc$} is $\NP$-complete for bipartite hosts, Müller that this remains true even when the hosts are \emph{chordal bipartite}, i.e, are bipartite graphs where every cycle of size at least~$6$ has a chord.
We reformulate Orlin's proof for the $\NP$-completeness of the global $\calBc$- and $\calK$-covering problem (here $\calK$ denotes the class of all complete graphs) within a framework which we can then extend to show $\NP$-completeness for the local $\calBc$- and $\calK$-covering problem.
As in the work of Orlin, we prove $\NP$-completeness via the decision problem for \emph{$S$-partial $\calG$-covers}.
An $S$-partial $\calG$-cover of a graph~$H$ and a subset~$S \subseteq E(H)$ of its edges consists of subgraphs~$G_1, \dots, G_t \in \mathcal{G}$ of~$H$ whose union covers all edges of~$S$. 
Using the global and the local $S$-partial $\calG$-cover as a stepping stone, we answer Fishburn and Hammer's question on the complexity of the \probloc{$\calBc$} and obtain alternative proofs for three other settings\footnote{For the class~$\calK$ of all complete graphs and the class~$\calBc$ of all complete bipartite graphs, the global \cite[Theorem~8.1]{orlin1977contentment} \cite[Proposition~3]{kouCoveringEdgesCliques1978} and local $\calK$-covering problem \cite[Theorem~2.1]{poljak1981complexityRepresentation} and the global $\calBc$-covering problem \cite[Theorem~6]{mullerEdgePerfectnessClasses1996}\cite[Theorem~8.1]{orlin1977contentment} have been known to be $\NP$-complete.}:
\begin{restatable}{theorem}{thmGlobLocCompbipBip}
	\label{thm:glob-loc-compbip-bip}
	The \probglobloc{$\calG$} are $\NP$-complete for the class~$\calG$ of
    \begin{itemize}
        \item all complete graphs,
		\item all complete bipartite graphs.
	\end{itemize}
\end{restatable}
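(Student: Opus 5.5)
Membership in $\NP$ is immediate in all four cases. A $\calK$-cover can use only cliques of $H$. A $\calBc$-cover can use only bicliques of $H$, and every biclique has at most $|E(H)|$ edges. We may assume every guest contains at least one edge, so an optimal cover needs at most $|E(H)|$ guests. A certificate therefore lists at most $|E(H)|$ vertex subsets (with a bipartition in the $\calBc$ case). Verifying it means checking that each listed subgraph lies in $\calG$ and in $H$, that every edge is covered, and that the global count, respectively the maximum number of guests containing a single vertex, is at most $k$. All of this is polynomial.

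For hardness I would follow Orlin's strategy and go through the $S$-partial versions. The first step is to show that the global and local $S$-partial $\calK$- and $\calBc$-covering problems are $\NP$-hard. For cliques, I would reduce from a known hard covering problem such as covering with few cliques; the local hardness of $\calK$-covering can be taken from Poljak, Rödl and Turzík if needed. For bicliques, I would use Orlin's observation: for a bipartite graph $H$ with sides $A,B$, add all edges inside $A$ and all edges inside $B$ to get a graph $H'$, and take $S=E(H)$. A clique of $H'$ meets $A$ in a clique part and $B$ in a clique part, so its edges in $S$ form exactly a biclique of $H$. Conversely, every biclique of $H$ spans a clique of $H'$. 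This correspondence holds guest by guest and vertex by vertex, so it preserves both the number of guests and the number of guests at each vertex. Thus $S$-partial clique covers of $(H',S)$ correspond to biclique covers of $H$, in both the global and the local setting.

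The second step, and the main one, is to eliminate $S$. I need a gadget that turns an instance $(H,S)$ into a graph $\hat H$ with $c(\hat H)=c_S(H)+f$ for an explicitly computable offset $f$. For the global setting I would blow up each vertex, replacing $v$ by an independent set (or a clique, as appropriate) of several copies. Non-$S$ edges are made cheap: they are covered by a few large structures that cover all of them simultaneously. $S$-edges are made expensive: each one must be covered by a guest corresponding to a clique or biclique of the original graph. For the local setting the cost must be spread evenly, so that every vertex pays the same fixed amount $f$ to cover the non-$S$ edges. Natural candidates are a product with a complete graph $K_m$, using \cref{thm:local-compbip-kn} for $\cl{\calBc}(K_m)=\ceil{\log_2 m}$, and a vertex-transitive pattern such as a shift graph.

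I expect the main obstacle to be the matching lower bound. I must show that an optimal cover of $\hat H$ cannot save by mixing the gadget edges with the $S$-edges. The plan is an exchange argument: project each guest onto one fibre of the blow-up. The projection is again a clique or biclique, and averaging over the copies, or choosing the copy that is hit least, produces an $S$-partial cover of $H$ whose load at each vertex is at most the load in $\hat H$ minus $f$. For the lower bound on $f$ itself, I would use the tightness of \cref{thm:local-compbip-kn}, or its fractional counterpart \cref{thm:fractional-chromatic-cover-bip}.
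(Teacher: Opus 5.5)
Your $\NP$-membership argument is fine, and the clique cases can be rescued by citing Orlin, Kou et al.\ and Poljak--R\"odl--Turz\'ik. The hardness part, however, has two genuine gaps, and both sit at the one case that is new: the \probloc{$\calBc$}.

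\textbf{First gap: no source of hardness for the local $\calBc$ case.} The ``Orlin observation'' you invoke for bicliques runs in the wrong direction. The map $H\mapsto H'$ (make both sides cliques, $S=E(H)$) turns an instance of $\calBc$-covering of a bipartite graph into an instance of \emph{partial $\calK$}-covering. So it could only transfer hardness from $\calBc$ to partial-$\calK$, and it never produces a partial-$\calBc$ instance. Nor can it be reversed: the known hard partial-$\calK$ instances (e.g.\ $H\join x$ with $S$ the star at $x$) are not of the special form $(H',E(H))$. You therefore have no source of hardness for the partial local $\calBc$ problem. You cannot simply cite the local $\calBc$ problem either, since its complexity is exactly Fishburn and Hammer's open question that the theorem answers.

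The missing idea is that the $2$-\probloc{$\calB$} is already $\NP$-complete. Indeed, $\cl{\calB}(H\join w)\le 2$ if and only if $\chi(H)\le 3$: the universal vertex forces a cover by three bipartite guests hitting every vertex exactly twice, and the seven possible side-labels form a $3$-colorable graph. Now view $H$ as a subgraph of $K_n$ with $S=E(H)$. The $\calB$-covers of $H$ and the $E(H)$-partial $\calBc$-covers of $K_n$ correspond guest by guest without increasing hitcounts, since every bipartite guest extends to the complete bipartite graph on its bipartition inside $K_n$. The global partial $\calBc$ case comes from Orlin's reduction from vertex clique cover, not from the correspondence you describe.

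\textbf{Second gap: no actual reduction eliminating $S$.} Your second step lists candidate structures but never specifies $\hat H$ or $f$. The inequality you want from the exchange argument, namely ``load at most the load in $\hat H$ minus $f$'' at every vertex, is exactly what has to be proved. Knowing that the non-$S$ part alone costs $f$ per vertex (say via $\cl{\calBc}(K_m)=\ceil{\log_2 m}$) is not enough. A single guest of an optimal cover may cover gadget edges and $S$-edges at the same time, so you cannot discard $f$ guests per vertex. Moreover, in the local setting the offset must be identical at every vertex, regardless of how many optional edges it meets.

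The paper handles this in two steps. It first pads $H$ with pendant optional edges so that every vertex meets exactly $\ell-k$ optional edges. It then attaches to each optional edge an $\ell$-local gadget $(F,e)$ with two properties: $F$ admits an $\ell$-local cover hitting the endpoints of $e$ only once, and every $\ell$-local cover of any extension contains a guest inside $F$ covering $e$. Removing these forced guests lowers the hitcount of every vertex of $H$ by exactly $\ell-k$.

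For $\calBc$, the $(m+1)$-local gadget is the $m$-$\calBc$-blowup of a graph $H_0$ with $\cl{\calBc}(H_0)=m$: $m+2$ and $2m+3$ copies of $H_0$, completely joined through the support $xy$. Its forcing property comes from the star-blowup pigeonhole lemma. Polynomial size is secured by the $C_4$-free Erd\H{o}s--R\'enyi--S\'os graphs, which satisfy $\cl{\calBc}(G^{(p)})=\frac{p+3}{2}$, together with Bertrand's postulate. Nothing in your plan plays the role of these forcing gadgets. Even in the global case no blow-up is needed: a $C_4$ (for $\calBc$) or a $K_3$ (for $\calK$) attached to each optional edge suffices.
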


\subparagraph*{Comparing the computational complexity of global and local covering problems.}
In many cases, the \probglob{$\calG$} is $\NP$-complete.
Yet, when the local covering number~$\cn{l}{\calG}{}$ can be computed in polynomial time and the global covering number~$\cn{g}{\calG}{}$ can be bounded in terms of~$\cn{l}{\calG}{}$ for a host class~$\calH$ (see \cite{goetze2025boundedness} for structural properties of~$\calG$ and~$\calH$ which yield what is known as $(\cn{g}{\calG}{}$,$\cn{l}{\calG}{}$)-\emph{boundedness}) we obtain an approximation algorithm for~$\cn{g}{\calG}{}$.

In general it seems that computing global $\calG$-covering numbers is harder than for local $\calG$-covering numbers, as observed in \cite[p.\,1314]{blasiusLocalUnionBoxicity2018}.
Indeed, for many graph classes~$\calG$ this seems to be the case  (cf. \cref{tab:overview}).
While computing the global covering number with star forests is known to be $\NP$-complete \cite[Theorem~4]{hakimiStarArboricityGraphs1996} \cite[Theorem~2]{goncalvesStarCaterpillarArboricity2009}, Knauer and Ueckerdt show that the local counterpart can be computed in polynomial time \cite[Theorem~25]{knauer2016threeways}.
For covers with cliques, both the global and the local covering problem turn out to be $\NP$-complete \cite[Theorem~8.1]{orlin1977contentment} \cite[Theorem~2.1]{poljak1981complexityRepresentation}.
The same holds for covers with the class~$\calI$ of interval graphs.
The \probglob{$\calI$} has been shown to be $\NP$-complete by Jiang \cite[Theorem~1]{jiangRecognizingDIntervalGraphs2013}.
Stumpf adapted the proof to obtain $\NP$-completeness for the local variant \cite[Section~6.1]{stumpf2015CoveringNumbersDifferent}.

Knauer and Ueckerdt ask in \cite[Question~26]{knauer2016threeways} whether there exists a host class~$\calH$ and a guest class~$\calG$ such that computing the local $\calG$-covering number is $\NP$-hard while the global $\calG$-covering number can be computed in polynomial time for all graphs in~$\calH$.
This has been answered in the affirmative by Stumpf.
They provide a general template for such examples by restricting the host class~$\calH$ \cite[Section~6.2]{stumpf2015CoveringNumbersDifferent}.
In fact, they also give examples where the global $\calG$-covering number~$\cn{g}{\calG}{H}$ can be computed in constant time for all~$H \in \calH$, while it is undecidable whether $\cn{l}{\calG}{H} \leq k$ \cite{stumpf2017ma}.
Yet, the polynomial-time algorithms for computing~$\cn{g}{\calG}{}$ heavily rely on the very specifically chosen class~$\calH$ of host graphs and cannot be lifted to general graphs~$H$.
We give an example of a monotone, finite graph class~$\calG$ such that $\cn{g}{\calG}{H}$ can be computed in constant time for every graph~$H$, while the \probloc{$\calG$} is $\NP$-complete.
\begin{restatable}{theorem}{thmGlobalEasyLocalHard}
	\label{thm:global-easy-local-hard}
	For the class $\etri$ of all graphs on at most three edges, we have:
	\begin{enumerate}[label=\enumstyle{(\roman*)}, ref=\roman*]
		\item\label{itm:global-easy} $\cn{g}{\etri}{H} = \ceil{\frac{\abs{E(H)}}{3}}$ for all host graphs $H$.
		\item\label{itm:local-hard} The \probloc{$\etri$} is $\NP$-complete.
	\end{enumerate}
\end{restatable}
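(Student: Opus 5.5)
For part~(\ref{itm:global-easy}) the lower bound is immediate. Every guest has at most three edges, so $t$ guests cover at most $3t$ edges, which forces $t \geq \ceil{\abs{E(H)}/3}$. For the upper bound, partition $E(H)$ arbitrarily into $\ceil{\abs{E(H)}/3}$ sets of at most three edges each. Each part, viewed as a subgraph (edges plus their endpoints), lies in $\etri$, since $\etri$ contains every graph with at most three edges regardless of its shape. Note that $\etri$ is finite only up to isolated vertices; I would take guests without isolated vertices, so there are finitely many isomorphism types. This settles~(\ref{itm:global-easy}), and the value is computable in constant time once $\abs{E(H)}$ is known.

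For part~(\ref{itm:local-hard}), membership in $\NP$ is clear: a cover uses at most $\abs{E(H)}$ guests, each of constant size, and the local load at every vertex can be checked in polynomial time. For hardness, first rephrase $\cn{l}{\etri}{H} \le k$. We may assume the guests partition $E(H)$, because deleting edges from a guest keeps it in $\etri$ and never increases any load. So the question becomes: can $E(H)$ be partitioned into classes of size at most three such that every vertex $v$ meets at most $k$ classes? Each class is hit at $v$ by one, two or three of the $\deg(v)$ edges at $v$. A vertex of degree exactly $3k$ therefore has load at most $k$ only if the edges at $v$ are grouped into exactly $k$ classes, each consisting of three edges through $v$. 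In other words, the edges at $v$ must be split into $k$ stars $K_{1,3}$ centred at $v$.

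I would fix $k$ and reduce from an $\NP$-complete edge-partition problem. The most natural choice is \textsf{$K_{1,3}$-decomposition}, i.e.\ partitioning the edges into claws, which is $\NP$-complete by Dor--Tarsi (and relatedly $P_3$/triangle-free variants). An equally good alternative is the \textsf{triangle-partition} (edge-partition into triangles) problem, which is $\NP$-complete by Holyer. Take the triangle version with $k = 2$ as a concrete plan. In a graph where all degrees are small, the partition classes have size at most three. To force every class to be a triangle, use a counting argument. Every class $C$ contributes $\abs{V(C)}$ to the total load, and for three edges $\abs{V(C)} \ge 3$, with equality exactly for triangles. If the instance is $2r$-regular with $r$ small, then the sum of loads is $\sum_C \abs{V(C)}$, and since $\abs{C}\le 3$ we have $\sum_C \abs{V(C)} \ge \abs{E}$, with equality iff every class is a triangle. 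Choosing $k$ so that $k\abs{V} = \abs{E}$ ($k = \deg/2$, e.g.\ $4$-regular graphs with $k=2$) then shows that $\cn{l}{\etri}{H} \le k$ iff $H$ has a triangle partition. For the converse direction, a triangle partition of a $2k$-regular graph gives load exactly $k$ at each vertex.

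The main obstacle is making sure the counting inequality really holds. It needs $\abs{V(C)} \ge \abs{C}$ for every class, with equality only for triangles. This fails for a class with one or two edges (e.g.\ $\abs{V}=2$ versus $\abs{C}=1$, which is fine, but the inequality must be checked carefully). In fact a single edge has $2 > 1$, a $P_3$ has $3>2$, two disjoint edges have $4>2$, a triangle has $3=3$, and every other three-edge graph has at least $4$ vertices. So the equality case is clean. What remains is to confirm that triangle partition stays $\NP$-complete on $4$-regular graphs (or on $2k$-regular graphs for some fixed $k$). If the literature does not give this directly, I would pad vertices of lower degree with gadgets whose edges are forced into triangles, for example attaching octahedron-like or $K_5$-type pieces to which the same counting applies locally.
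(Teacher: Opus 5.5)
Part (i), the $\NP$-membership argument and your counting lemma are correct: for a $2k$-regular host $H$ you do get $\cl{\etri}(H)\le k$ if and only if $H$ has a $K_3$-decomposition. The gap is the hardness of the source problem. You leave it open, and your concrete choice does not work.

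In a $4$-regular graph with a triangle decomposition, every vertex lies in exactly two triangles, and two edge-disjoint triangles share at most one vertex. Taking the triangles as vertices and the vertices of $H$ as edges therefore gives a simple cubic graph $M$ with $H\cong L(M)$. Conversely, the triangles at the vertices of a cubic $M$ decompose $L(M)$. Line graphs can be recognised, and their root graphs reconstructed, in polynomial time, and by Whitney's theorem the root is essentially unique. Hence $K_3$-decomposition of $4$-regular graphs is in $\P$, and your $k=2$ plan cannot yield $\NP$-hardness unless $\P=\NP$.

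Holyer and Dor--Tarsi only give hardness for general graphs. You would therefore have to pad an arbitrary instance $D$ to a $2k$-regular graph with $2k\ge\Delta(D)$; this is allowed, since $k$ is part of the input. That requires, for every such $k$, triangle-decomposable gadgets in which all new vertices have degree exactly $2k$ and the attachment vertex receives a prescribed even deficit. That construction is the actual content of the reduction, and it is missing. The pieces you name do not supply it: $K_5$ has $10$ edges and no triangle decomposition, and octahedra only produce degree $4$, which is the polynomial case.

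The paper avoids regularity altogether, using exactly the claw observation you make and then drop. Assuming all degrees of $D$ are even, it attaches $\frac{3}{2}(\Delta(D)-\deg_D(v))$ pendant leaves to each $v\in V(D)$ and sets $k=\frac{\Delta(D)}{2}$. In the yes-case the old edges are covered by the triangles and the leaves at $v$ by claws centred at $v$, so every vertex of $D$ is hit exactly $k$ times.

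For the converse, your unweighted sum over all vertices is no longer tight, because the leaves are hit as well. Instead, the paper gives old edges weight $1$ and new edges weight $\frac13$, and defines the cost of a guest as the number of vertices of $D$ it hits.
\begin{itemize}
\item Covering gives $\sum_i w(G_i)\ge k\abs{V(D)}$, since the total edge weight is exactly $k\abs{V(D)}$.
\item Locality at the vertices of $D$ gives $\sum_i c(G_i)\le k\abs{V(D)}$.
\item A case distinction on the cost shows that every guest has cost at least its weight. Cost $1$ means only new edges at one vertex, so weight at most $1$. Cost $2$ allows at most one old edge, so weight at most $\frac53$. Cost at least $3$ means weight at most $3$.
\item Equality holds only for a triangle of old edges or a claw of three new edges.
\end{itemize}
Equality throughout then forces the old edges to be partitioned into triangles.
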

This provides the first natural graph class with this behavior and a more satisfactory answer to the question of Knauer and Ueckerdt.

\subsection{Related Work}
\subparagraph*{Algorithmic aspects of \texorpdfstring{$\bm{\calBc}$}{Bc}-covers in the literature.}
\label{overview:algorithmic}
A dense $n$-vertex graph~$H$ with a $k$-global $\calBc$-cover~$\varphi$ is structurally similar to a graph with~$\mathcal{O}(kn)$ edges. 
Indeed, for every complete bipartite graph~$K_{s,t}$ of the cover~$\varphi$, we can replace its~$s \cdot t$ edges in~$H$ with $s+t$~edges by inserting a new vertex~$w$ connected to all endpoints of the copy of~$K_{s,t}$ in~$H$.
This property can be algorithmically exploited:
Given a $\calBc$-decomposition (that is a $\calBc$-cover with edge-disjoint graphs) of an $n$-vertex graph~$H$ with weighted size~$s$, Feder and Motwani give algorithms for computing matchings of maximum size in bipartite graphs, vertex and edge connectivity and all-pair-shortest-paths which only depend on~$n$ and~$s$ \cite{federCliquePartitionsGraph1995}.
In particular for dense graphs, this yields improvements upon the runtime as every graph admits a $\calBc$-cover of weighted size~$\mathcal{O}(\frac{n^2}{\log(n)})$ \cite{chungDecompositionGraphsComplete1983,tuzaCoveringGraphsComplete1984}.
The runtime for computing maximum matchings in bipartite graphs has subsequently been improved by Cabello, Cheng, Cheong and Knauer:
In general, such a matching can be computed in time $\mathcal{O}(\abs{E(H)}^{1+\varepsilon})$ for every~$\varepsilon > 0$ with high probability, using maximum-flow \cite{chenMaximumFlowMinimumCost2022}.
Yet, the runtime can be reduced to $\mathcal{O}(s^{1+\varepsilon})$ if we are given a compression based on a $\calBc$-cover of~$H$ of weighted size~$s$ \cite[p.\,13]{cabelloGeometricMatchingBottleneck2024}.
Bounds on the weighted size of $\calBc$-covers have been obtained for many graph classes based on intersection families and visibility graphs (see \cite{cardinalCompactRepresentationSemilinear2025} for an overview).
In fact, lower bounds on the weighted size of~$\calBc$-covers of a point-hyperplane incidence graph yield lower bounds on the runtime of partitioning algorithms counting the number of point-hyperplane incidences \cite{ericksonNewLowerBounds1996}.

\subparagraph*{Local covering numbers in the literature.}
\label{overview:local}
Fishburn and Hammer consider global and local $\calBc$-covering numbers \cite{fishburn1996BipartiteDimension} for the class~$\calBc$ of complete bipartite graphs.
While their question on the complexity of computing global $\calBc$-covering numbers has been answered \cite{mullerEdgePerfectnessClasses1996,orlin1977contentment}, the complexity of the \probloc{$\calBc$} remained open.
We answer this question within this work (cf. \cref{thm:glob-loc-compbip-bip}).
In \cite{javadiLocalCliqueCovering2012} the local covering numbers with complete graphs are considered, which had been introduced in \cite{skumsEdgeIntersectionGraphs2009} (Skums, Suzdal and Tyshkevich mostly consider $k$-local decompositions into complete graphs).
Pinto compares \emph{decomposition numbers}\footnote{They use the term \emph{partition number} instead. Yet, within this work we use decomposition when considering edge-partitions, as partition is mostly used in the literature for vertex-partitions.} (that is the minimum number of edge-disjoint graphs from~$\calG$ required to cover a graph~$H$) to covering numbers. 
They observe that the global $\calBc$-partition number can be bounded in terms of the global $\calBc$-covering number, yet the analogous statement in the local setting does not hold \cite{pintoBicliqueCoversPartitions2014}.
Recently, the interest in local $\calBc$-covering numbers has increased due to its application in adjacency labeling. 
Cardinal and Yuditsky give upper bounds on the local $\calBc$-covering number~$\cn{l}{\calBc}{H}$ of graphs~$H \in \calH$ in the number~$\abs{V(H)}$ of vertices for many classes~$\calH$ \cite{cardinalCompactRepresentationSemilinear2025}.
In particular, they prove a logarithmic upper bound on~$\cn{l}{\calBc}{}$ for what is known as \emph{semilinear families} (which include interval graphs, permutation graphs, circle graphs, box intersection graphs and intersection graphs of rectilinear objects), thereby unifying proofs of known bounds on the size of adjacency labelings for different families~$\calH$. 
Analyzing known algorithms for computing $\calBc$-decompositions, they compute small adjacency labelings of what is known as capped graphs \cite[p.\,22]{cardinalImplicitRepresentationsPolynomial2026}.

Knauer and Ueckerdt give sharp bounds on global and local $\cal{G}$-covering numbers for specific host classes (such as planar and outerplanar graphs, and graphs of bounded treewidth) where $\calG$ is the class of star, caterpillar and linear forests and interval graphs \cite{knauer2016threeways}.
Apart from the global $\calI$-covering number~$\cn{g}{\calI}{}$ for the class~$\calI$ of all interval graphs (known as the \emph{track number}), another variant called the \emph{interval number}~$\cn{f}{\calI}{}$ received much attention \cite{trotterDoubleMultipleInterval1979,scheinermanIntervalNumberPlanar1983,gueganIntervalNumberPlanar2021,hopkinsBoundIntervalNumber1981, hopkinsIntervalNumberComplete1984,erdosNoteIntervalNumber1985,scheinermanMaximumIntervalNumber1987,andreaeExtremalProblemConcerning1986,baloghIntervalNumberSpecial2004,griggsExtremalValuesInterval1980,scheinermanIntervalNumberChordal1988,andreaeIntervalNumberTriangulated1987}, which is closely related to the local $\calI$-covering number~$\cn{l}{\calI}{}$ introduced in \cite{knauer2016threeways}.

\section{Preliminaries}
\label{sec:preliminaries}

\subparagraph*{\texorpdfstring{$\bm{\calG}$}{G}-covers and decompositions.}
\label{par:covers}
For a graph class~$\calG$ (called the \emph{guest class}) we say that subgraphs~$G_1, \dots, G_t \in \calG$ of a graph~$H$ form a \emph{$\calG$-cover of~$H$} if their union covers all edges of~$H$.
That is, there is an injective\footnote{We say that $\varphi$ is injective if the restriction~$\varphi_{\mid G_i}$ of $\varphi$ to each graph~$G_i$ is injective.}, edge-surjective graph homomorphism~$\varphi\colon G_1 \cupdot \dots \cupdot G_t \to H$ where $G_i \in \calG$ for all~$i \in [t]$.\footnote{In general a $\calG$-cover~$\varphi$ may correspond to a not necessarily injective graph homomorphism \cite{knauer2016threeways,goetze2025boundedness}, that is the graphs~$G_i$ may not be subgraphs of~$H$. Yet all graph homomorphisms we consider in this work are injective.}
If the subgraphs~$\varphi(G_i)$ are all edge-disjoint, $\varphi$ is a \emph{$\calG$-decomposition}, see \cref{fig:example_G-cover} for examples.
We write~$G$-decomposition when~$\calG$ corresponds to a single graph~$G$.
We call~$H$ the \emph{host} and the graphs~$G_i$ the \emph{guests}.
An edge~$e \in E(H)$ is \emph{covered} by a guest~$G_i$ if~$e \in \varphi(E(G_i))$ and a vertex $v \in V(H)$ is \emph{hit} by~$G_i$ if $v \in \varphi(V(G_i))$.
The \emph{hitcount} $\hit{\varphi}{v} = \abs{\varphi^{-1}(v)}$ of a vertex~$v \in V(H)$ corresponds to the number of guests~$G_i$ that contain~$v$.
When~$\varphi$ is clear from context, we may also write~$\hit{v}{}$.
\begin{figure}
    \centering
    \begin{subfigure}[t]{0.5\textwidth}
       \centering
        \includegraphics[page=6]{figures/examples_G-cover.pdf}
        \caption{}
        \label{fig:examples_G-cover-k7}
    \end{subfigure}\hfill
    \begin{subfigure}[t]{0.5\textwidth}
        \centering
        \includegraphics[page=8]{figures/examples_G-cover.pdf}
        \caption{}
        \label{fig:examples_G-cover-shift}
    \end{subfigure}
    \caption{Examples of $\calG$-covers. (\subref{fig:examples_G-cover-k7}) A $10$-global $5$-local $\set{K_3}$-cover of~$K_7$ (taken from \cite[Figure~1]{goetze2025boundedness}). (\subref{fig:examples_G-cover-shift}) A $5$-global $2$-local $\calBc$-cover of a shift graph~$S$ of an orientation of~$K_5$ certifying $\cn{l}{\calBc}{S} \leq 2$.}
    \label{fig:example_G-cover}
\end{figure}

A cover~$\varphi\colon G_1 \cupdot \dots \cupdot G_t \to H$ is 
\begin{itemize}
\item \emph{$t$-global} if it consists of up to~$t$ guests, and 
\item \emph{$k$-local} if the hitcount of no vertex exceeds~$k$, that is~$\hit{\varphi}{v} \leq k$ for all $v \in V(H)$,
\end{itemize}
see \cref{fig:example_G-cover} for examples.
The smallest~$t$ such that there exists a $t$-global $\calG$-cover of~$H$ is the \emph{global $\calG$-covering number}~$\cn{g}{\calG}{H}$.
The \emph{local $\calG$-covering}~$\cn{l}{\calG}{H}$ is the smallest~$k$ such that there exists a $k$-local $\calG$-cover of~$H$.
If there exists no such cover, we set $\cn{g}{\calG}{H} = \infty$ and~$\cn{l}{\calG}{H} = \infty$ respectively.
Note that if~$K_2 \in \calG$, then the local and the global $\calG$-covering number are finite for every graph~$H$.
Every $t$-global $\calG$-cover of a graph~$H$ is in particular $t$-local.
We thus obtain the following.
\begin{lemma}[{Knauer and Ueckerdt \cite[Proposition~4]{knauer2016threeways}}]
    For every graph class~$\calG$ and every graph~$H$, we have 
    $\cn{l}{\calG}{H} \leq \cn{g}{\calG}{H}$.
\end{lemma}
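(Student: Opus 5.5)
The plan is to argue directly from the definitions, taking an optimal global cover and reading off its hitcounts. If $\cn{g}{\calG}{H} = \infty$, the inequality $\cn{l}{\calG}{H} \leq \cn{g}{\calG}{H}$ holds trivially. So assume $t := \cn{g}{\calG}{H} < \infty$, and fix a $t$-global $\calG$-cover $\varphi\colon G_1 \cupdot \dots \cupdot G_t \to H$. Its existence is guaranteed by the definition of the global covering number as a minimum over covers.

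The key step is to bound the hitcount of every vertex by $t$. For $v \in V(H)$ we have $\hit{\varphi}{v} = \abs{\varphi^{-1}(v)}$. By the injectivity convention, the restriction of $\varphi$ to each guest $G_i$ is injective, so each $G_i$ contributes at most one preimage of $v$. Hence $\abs{\varphi^{-1}(v)} \leq t$. (Even without injectivity, one would count $\hit{}{}$ as the number of guests containing $v$, which is still at most $t$.)

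This shows that $\varphi$ is a $t$-local $\calG$-cover of $H$. Because $\cn{l}{\calG}{H}$ is the minimum $k$ over all $k$-local covers, we obtain $\cn{l}{\calG}{H} \leq t = \cn{g}{\calG}{H}$. No step is a genuine obstacle. The only point requiring care is the hitcount bound, which needs the per-guest injectivity, or equivalently the reading of the hitcount as a number of guests.
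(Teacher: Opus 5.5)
Your proof is correct and matches the paper's argument. The paper simply notes that every $t$-global $\calG$-cover is in particular $t$-local, because each vertex lies in at most $t$ guests. Your handling of the case $\cn{g}{\calG}{H} = \infty$ and your use of per-guest injectivity to bound $\hit{\varphi}{v}$ make explicit what the paper leaves implicit.
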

Yet, $\cn{l}{\calG}{H}$ and~$\cn{g}{\calG}{H}$ can be arbitrarily far apart \cite{goetze2025boundedness}.

Note that for each~$G_i$ of a $t$-global $\calG$-cover $\varphi\colon G_1 \cupdot \dots \cupdot G_t \to H$ of a graph~$H$ with minimum $t$, there exists an edge~$e_i \in E(H)$ that is not covered by any other graph~$G_j$. 
That is, the global $\calG$-covering number of~$H$ (if finite) does not exceed the number of edges.
\begin{lemma}
	\label{thm:covering-small-witness}
    If a graph~$H$ admits a $\calG$-cover for some graph class~$\calG$, then 
    \[\cn{l}{\calG}{H} \leq \cn{g}{\calG}{H} \leq \abs{E(H)}.\]
\end{lemma}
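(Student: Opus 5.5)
The first inequality, $\cn{l}{\calG}{H} \leq \cn{g}{\calG}{H}$, is the lemma of Knauer and Ueckerdt stated just above, so only $\cn{g}{\calG}{H} \leq \abs{E(H)}$ needs an argument. Since $H$ admits a $\calG$-cover, $\cn{g}{\calG}{H}$ is finite. Fix a $t$-global $\calG$-cover $\varphi\colon G_1 \cupdot \dots \cupdot G_t \to H$ with $t = \cn{g}{\calG}{H}$ minimum.

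The key step is to show that every guest is needed. Suppose some guest $G_i$ covers no edge that is left uncovered by the other guests, i.e.\ $\varphi(E(G_i)) \subseteq \bigcup_{j \neq i} \varphi(E(G_j))$. Then restricting $\varphi$ to $\bigcupdot_{j\neq i} G_j$ still gives an injective, edge-surjective homomorphism onto $H$, that is, a $(t-1)$-global $\calG$-cover. This contradicts the minimality of $t$. The degenerate case $t=1$ also fits this argument: an edgeless $H$ would then satisfy $t = 0$ via the empty cover, since the empty union covers $E(H)=\emptyset$.

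Hence for each $i \in [t]$ we may choose an edge $e_i \in E(H)$ that is covered by $G_i$ and by no $G_j$ with $j \neq i$. The map $i \mapsto e_i$ is injective, because $e_i = e_j$ with $i \neq j$ would mean that this edge is covered by two distinct guests. This gives $t \leq \abs{E(H)}$, and chaining with the Knauer--Ueckerdt lemma yields the claim.

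There is no real obstacle here. The only point needing care is the convention that an empty cover is allowed, so that the bound $0 \leq 0$ holds for edgeless hosts; otherwise the argument is a direct minimality argument.
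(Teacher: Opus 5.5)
Your proof is correct and follows the paper's own argument: in a minimum-size global cover every guest must cover a private edge (otherwise it could be dropped), these private edges are pairwise distinct, so $\cn{g}{\calG}{H} \leq \abs{E(H)}$, and the first inequality is the Knauer--Ueckerdt lemma. Your remark that the empty cover must be allowed for edgeless hosts is a sensible detail that the paper leaves implicit.
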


\subparagraph*{\texorpdfstring{$\bm{S}$}{S}-partial \texorpdfstring{$\bm{\calG}$}{G}-covers.}
\label{par:S-partial-covers}
To prove $\NP$-hardness of several decision problems related to covering numbers, Orlin introduces the notion of \emph{$S$-partial $\calG$-covers} \cite{orlin1977contentment}. 
While a $\calG$-cover of a graph~$H$ needs to cover all edges of~$H$, an $S$-partial $\calG$-cover is only required to cover all edges~$S \subseteq E(H)$:
For a subset~$S \subseteq E(H)$ of the edges of a graph~$H$, we say that a set of subgraphs~$G_1, \dots, G_t \in \calG$ of~$H$ forms an an \emph{$S$-partial $\calG$-cover} if their union covers all edges of~$S$.
More formally, there is a (not necessarily edge-surjective) injective graph homomorphism~$\varphi\colon G_1 \cupdot \dots G_t \to H$ that hits each edge of~$S$.
We call edges $e \in E(H) - S$ \emph{optional}.
As for $\calG$-covers, we say that~$\varphi$ is 
\begin{itemize}
\item \emph{$t$-global} if it consists of $t$~subgraphs~$G_i$, and 
\item \emph{$\ell$-local} if~$\abs{\varphi^{-1}(v)} \leq \ell$ for every $v \in V(H)$.
\end{itemize}
When~$\calG$ is monotone, $\varphi$ can be restricted to a $\calG$-cover of~$S$.
Yet, in general $H$ might admit a $t$-global $S$-partial $\calG$-cover, but there might be no $t$-global $\calG$-cover of the graph induced by the edges of~$S$ for some~$S \subseteq E(H)$. 
Indeed, for the class~$\calBc$ of all complete graphs, the graph~$K_{n,n}$ admits a $1$-global $\calBc$-cover~$\varphi$, which is in particular an $S$-partial $\calBc$-cover for every~$S \subseteq E(K_{n,n})$.
However, when~$S \subseteq E(K_{n,n})$ does not correspond to a complete bipartite graph, there is no $1$-global $\calBc$-cover of~$S$.

\subparagraph*{\texorpdfstring{$\bm{\calG}$}{G}-covering problems.}
\label{par:covering_problems}
With regard to algorithmic complexity we consider two decision problems for a fixed graph class~$\calG$:
\begin{itemize}
    \item \probglob{$\calG$}: Given a graph $H$ and $k \in \mathbb{N}$, does $H$ admit a $k$-global $\calG$-cover?
    \item \probloc{$\calG$}: Given a graph $H$ and $k \in \mathbb{N}$, does $H$ admit a $k$-local $\calG$-cover?
\end{itemize}
Here, both~$H$ and~$k$ are part of the input, while the graph class~$\calG$ is fixed.
For fixed $k \in \N$ and a fixed graph class~$\calG$, we obtain two related decision problems:
\begin{itemize}
    \item $k$-\probglob{$\calG$}: Given a host graph $H$, does $H$ admit a $k$-global $\calG$-cover?
    \item $k$-\probloc{$\calG$}: Given a host graph $H$, does $H$ admit a $k$-local $\calG$-cover?
\end{itemize}
Clearly, $\NP$-hardness of the $k$-\probglob{$\calG$} implies $\NP$-hardness of $k$-\probloc{$\calG$}, and the same holds for the local setting. 
We say that the global covering problem for the graph class~$\calG$ is 
\emph{weakly $\NP$-complete} when the \probglob{$\calG$} is $\NP$-complete, and
\emph{strongly $\NP$-complete} when the $k$-\probglob{$\cal{G}$} is $\NP$-complete for some~$k$.
We use these terms similarly in the local setting.
See \cref{tab:overview} for an overview on the complexity of covering problems.

Similarly, by considering $S$-partial $\calG$-covers of a graph~$H$ instead of~$\calG$-covers of~$H$, we obtain two decision problems, in each of which the subset~$S \subseteq E(H)$, the graph~$H$ and $k \in \N$ are part of the input: the \probpartglob{$\calG$} and the \probpartloc{$\calG$}.

As the global $\calG$-covering number of every graph~$H$ does not exceed the number of edges if~$\cn{g}{\calG}{H}$ is finite (cf \cref{thm:covering-small-witness}), each graph~$H$ admits a small witness (if any). 
The same holds for $S$-partial $\calG$-covers of~$H$ as every $\calG$-cover is also an $S$-partial cover for every~$S \subseteq E(H)$.
In particular, we obtain the following:
\begin{observation}
\label{obs:NP-membership}
    If it can be verified in polynomial time whether a graph~$G$ belongs the a graph class~$\calG$, then the (\textsf{partial}-) \textsf{global}- and (\textsf{partial}-) \textsf{local}-$\calG$-\textsf{covering} problem lie in~$\NP$.
\end{observation}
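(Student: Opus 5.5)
The plan is to use the small-witness bound of \cref{thm:covering-small-witness} to show that a yes-instance always has a certificate of polynomial size that can be checked in polynomial time. There are four problems to handle: global, local, partial-global and partial-local. The argument is the same for all of them, once we also account for the set $S$.

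\textbf{Step 1: bounding the number of guests.} In the global problems, a $k$-global cover has at most $k$ guests. If $k > \abs{E(H)}$, then by \cref{thm:covering-small-witness} (for a cover) or by the analogous argument (for an $S$-partial cover) we may replace $k$ by $\abs{E(H)}$. The analogous argument is this: drop every guest that does not cover an edge of $S$ that no other guest covers. What remains is still an $S$-partial cover, now with at most $\abs{S} \le \abs{E(H)}$ guests.

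For the local problems, the same pruning applies. Discarding guests never increases any hitcount, so a $k$-local (partial) cover can be pruned to one with at most $\abs{E(H)}$ guests that is still $k$-local.

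\textbf{Step 2: the certificate has polynomial size.} Since $\varphi$ is injective on each guest, every guest $G_i$ is isomorphic to a subgraph of $H$. It can therefore be encoded by the vertex set and edge set of its image $\varphi(G_i)$ in $H$, which takes $O(\abs{V(H)}+\abs{E(H)})$ space. With at most $\abs{E(H)}$ guests, the whole certificate has polynomial size in $\abs{H}$. Note that the instance size also includes the encoding of $k$, which only makes the certificate relatively smaller.

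\textbf{Step 3: the verifier runs in polynomial time.} Given the list of subgraphs, the verifier checks the following.
\begin{enumerate}
\item Each listed subgraph is a subgraph of $H$.
\item Each listed subgraph belongs to $\calG$. This is polynomial by the hypothesis of the observation.
\item Their union covers every edge of $E(H)$, or every edge of $S$ in the partial problems.
\item The number of subgraphs is at most $k$ (global), or every vertex $v$ lies in at most $k$ of them (local).
\end{enumerate}
All of these checks are clearly polynomial.

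\textbf{Main subtlety.} The only delicate point is the partial setting. Here one must make sure the pruning argument still applies, because in general $\calG$ is not monotone and an $S$-partial cover cannot be restricted to a cover of $S$. That is why I would prune whole guests rather than restrict them. Pruning whole guests keeps every remaining guest in $\calG$, while still ensuring each remaining guest is needed for some edge of $S$.
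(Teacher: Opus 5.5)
Your proposal is correct. Its core is the paper's own argument: a yes-instance has a witness with at most $\abs{E(H)}$ guests, because every guest of a minimal cover owns a private edge. The verifier then only checks membership in $\calG$, coverage, and the global or local bound.

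Where you differ is in how the local and partial variants are justified. The paper cites \cref{thm:covering-small-witness}, which bounds the \emph{value} $\cg{\calG}(H)$ via a minimum-size global cover. It adds the remark that every $\calG$-cover is also an $S$-partial cover. These two facts leave gaps:
\begin{itemize}
\item They do not bound the number of guests of a $k$-local cover when $\cl{\calG}(H) \le k < \cg{\calG}(H)$.
\item They give no size bound when $H$ has an $S$-partial $\calG$-cover but no $\calG$-cover at all. For example, take $\calG = \{K_3\}$, $H$ a triangle with a pendant edge, and $S$ the three triangle edges.
\end{itemize}
Your pruning of whole guests handles all four problems uniformly. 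It preserves $k$-locality, since hitcounts only decrease, and it needs no heredity or monotonicity of $\calG$. So it is the more complete justification of the observation.

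One wording fix: guests must be discarded one at a time, i.e.\ you pass to an inclusion-minimal subfamily that still covers $S$. Discarding simultaneously every guest without a private edge can destroy the cover. For instance, two guests that both cover the same single edge of $S$ and nothing else would both be removed.
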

It thus suffices to show $\NP$-hardness to obtain $\NP$-completeness.

\subparagraph*{Fractional chromatic number.}
\label{par:fractional_number}
The fractional chromatic number is a well-established parameter in the field of fractional graph theory, see \cite{scheinermanFractionalGraphTheory2011} for an introduction.
It is defined via \emph{$\by{a}{b}$-colorings}.
An \emph{$\by{a}{b}$-coloring} of a graph~$H$ is a vertex-coloring~$\Phi\colon V(H) \to \binom{[a]}{b}$ where each vertex is assigned a set of $b$ out of~$a$ possible colors such that~$\Phi(u) \cap \Phi(v) = \varnothing$ for every edge~$uv \in E(H)$.
That is, the color sets of adjacent vertices are disjoint.
We say that~$H$ is \emph{$\by{a}{b}$-colorable}, see \cref{fig:petersen} for an example.
\begin{figure}
    \centering
    \begin{subfigure}[t]{0.4\textwidth}
        \centering
        \includegraphics[page=1]{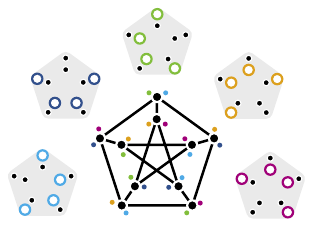}
        \caption{}
        \label{fig:petersen}
    \end{subfigure}\hfill
    \begin{subfigure}[t]{0.3\textwidth}
        \centering
        \includegraphics[page=2]{figures/examples_fractional_chrom.pdf}
        \caption{}
        \label{fig:matching}
    \end{subfigure}\hfill
    \begin{subfigure}[t]{0.3\textwidth}
        \centering
        \includegraphics[page=3]{figures/examples_fractional_chrom.pdf}
        \caption{}
        \label{fig:fractional_coloring}
    \end{subfigure}\hfill
    \caption{(\subref{fig:petersen}) A $\by{5}{2}$-coloring of the Petersen graph~$\kneser{5}{2}$ certifying~$\chif(\kneser{5}{2}) \leq \frac{5}{2}$. In fact, $\chif(\kneser{5}{2}) = \frac{5}{2}$ by \cref{thm:fract-chrom-ind}. (\subref{fig:matching}) A $\by{2}{1}$-coloring of a matching~$M$. (\subref{fig:fractional_coloring}) A graph~$H$ that can be covered with the Petersen graph~$\kneser{5}{2}$ (solid lines) and the matching~$M$ (dotted lines). The colors of three vertices are illustrated for the coloring~$\Phi$ obtained from the $\by{5}{2}$-coloring of~$\kneser{5}{2}$ and the $\by{2}{1}$-coloring of~$M$ as defined in the proof of \cref{thm:fractional-chromatic-cover}.}
    \label{fig:example_fractional_chrom}
\end{figure}
The minimum~$a$ such that~$H$ is~$\by{a}{b}$-colorable is the \emph{$b$-fold chromatic number}~$\chib_b(H)$.
The \emph{fractional chromatic number} is now defined as~$\chif(H) = \inf_{b \in \N} \frac{\chib_b(H)}{b}$.
Clearly, we have $\chif(H) \leq \chi_1(H) = \chi(H)$ and it is easy to see that $\chif$ is a monotone graph parameter:
\begin{observation}
\label{obs:fract_chi_monotone}
    For every graph~$H$, we have $\chif(H) \leq \chi(H)$ and if~$H'$ is a subgraph of a graph~$H$, then $\chif(H') \leq \chif(H)$.
\end{observation}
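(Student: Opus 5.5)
Both parts of the statement follow directly from the definitions.

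For the first inequality, $\chif(H) \leq \chi(H)$, I take $b=1$ in the definition. Then $\chi_1(H)$ is the minimum $a$ such that $H$ admits an $\by{a}{1}$-coloring. Such a coloring assigns to each vertex a single color from $[a]$, and adjacent vertices receive different colors. This is exactly a proper vertex coloring with $a$ colors, so $\chi_1(H) = \chi(H)$. Since $\chif(H) = \inf_{b \in \N} \chi_b(H)/b$, the infimum is at most the term $b=1$, which gives $\chif(H) \leq \chi_1(H)/1 = \chi(H)$.

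For monotonicity, let $H'$ be a subgraph of $H$ and fix $b \in \N$. Take an $\by{a}{b}$-coloring $\Phi$ of $H$ with $a = \chi_b(H)$, and restrict it to $V(H')$. Every edge $uv \in E(H')$ is also an edge of $H$, so $\Phi(u) \cap \Phi(v) = \varnothing$. Hence the restriction is an $\by{a}{b}$-coloring of $H'$, and therefore $\chi_b(H') \leq \chi_b(H)$ for every $b$.

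Dividing by $b$ and taking the infimum over $b \in \N$ on both sides yields $\chif(H') \leq \chif(H)$.

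I do not expect any real obstacle here. The only points to check are the identification $\chi_1 = \chi$ and that the bound $\chi_b(H') \leq \chi_b(H)$ holds for each $b$ separately before passing to the infimum.
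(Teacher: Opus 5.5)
Your proof is correct and follows the paper's reasoning. The paper notes $\chif(H) \leq \chi_1(H) = \chi(H)$ via $b=1$ and simply calls monotonicity easy to see. Your restriction argument, which gives $\chi_b(H') \leq \chi_b(H)$ for each fixed $b$ before passing to the infimum, is exactly the omitted detail.
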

The fractional chromatic number is known to be always rational \cite[p.\,30]{scheinermanFractionalGraphTheory2011}.
\begin{lemma}
	\label{thm:fract-chrom-rational}
	For every graph $H$, there are integers $a$ and $b$ such that $H$ is $a{:}b$-colorable and $\chif(H) = \frac{a}{b}$.
	In particular, $\chif(H)$ is a rational number.
\end{lemma}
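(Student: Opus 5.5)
The plan is to use the LP characterization of the fractional chromatic number. By LP duality, $\chif(H)$ equals the optimum of the linear program that minimizes $\sum_I x_I$ over nonnegative weights $x_I$ on the independent sets $I$ of $H$, subject to $\sum_{I \ni v} x_I \geq 1$ for every vertex $v$. A $b$-fold coloring with $a$ colors gives a feasible point of value $a/b$: put weight $1/b$ on each color class. Conversely, an optimal vertex of the LP has rational entries. Clearing denominators yields integer weights, hence an $\by{a}{b}$-coloring with $a/b$ equal to the LP optimum. This is the standard argument from \cite{scheinermanFractionalGraphTheory2011}, and I will follow it.

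\textbf{Step 1 (setup).} Write $\calI(H)$ for the finite family of independent sets of $H$, including the empty set. The infimum in the definition of $\chif$ is over $b\in\N$. For each fixed $b$, $\chi_b(H)$ is a positive integer attained by some coloring.

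\textbf{Step 2 (colorings give feasible LP points).} Let $\Phi$ be an $\by{a}{b}$-coloring. For each color $c\in[a]$, the class $\Phi^{-1}(c)=\set{v \given c\in\Phi(v)}$ is independent. Set $x_I = \frac1b\cdot\abs{\set{c \given \Phi^{-1}(c)=I}}$. Every vertex lies in exactly $b$ classes, so $x$ is feasible with value $a/b$. Hence the LP optimum $\lambda$ satisfies $\lambda\le\chif(H)$.

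\textbf{Step 3 (feasible LP points give colorings).} The feasible region is a nonempty polyhedron. The objective is bounded below by $0$, so an optimum $x^*$ exists and can be taken at a vertex. A vertex is the unique solution of a linear system with integer coefficients, so by Cramer's rule $x^*$ is rational and $\lambda\in\mathbb{Q}$.

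Choose $b$ such that $bx^*_I\in\N$ for all $I$. First make the covering constraints tight: if some $v$ has $\sum_{I\ni v}x^*_I>1$, shift weight from an $I\ni v$ to $I\setminus\set{v}$, which is still independent. This keeps the objective and rationality unchanged, possibly after rescaling $b$. Alternatively, allow $v$ to take only $b$ of its colors. Then take $a=\sum_I bx^*_I = b\lambda$ colors, with $bx^*_I$ colors assigned to each $I$. Give each vertex the colors of the independent sets containing it, truncated to exactly $b$ colors. Adjacent vertices never share an independent set, so their color sets are disjoint. This gives an $\by{a}{b}$-coloring with $a/b=\lambda$, so $\chif(H)\le\lambda$.

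\textbf{Conclusion.} Steps 2 and 3 give $\chif(H)=\lambda=a/b$, and this value is attained, which proves both claims.

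The main obstacle is Step 3: establishing that an optimal solution exists at a rational vertex, and converting it into a coloring in which every vertex has exactly $b$ colors. I would handle the existence via the fundamental theorem of linear programming and the truncation argument above.
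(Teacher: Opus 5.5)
Your argument is correct. It is the standard linear-programming proof from Scheinerman and Ullman \cite{scheinermanFractionalGraphTheory2011}, which the paper invokes only by citation and does not reproduce. One small point: the opening appeal to ``LP duality'' is unnecessary, because your Steps 2 and 3 show directly that $\chif(H)$ equals the LP optimum, and the truncation variant of Step 3 makes the weight-shifting step superfluous.
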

As every color class of a proper coloring of a graph~$H$ forms an independent set, we obtain a lower bound on the chromatic number of~$H$ in terms of the size~$\alpha(H)$ of a largest independent set.
In fact, this lower bound also applies to the fractional chromatic number \cite[Proposition~3.1.1]{scheinermanFractionalGraphTheory2011}.
\begin{lemma}
	\label{thm:fract-chrom-ind}
	For every graph $H$ we have $\frac{\abs{V(H)}}{\alpha(H)} \leq \chif(H)$.
\end{lemma}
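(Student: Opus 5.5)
The lemma $\frac{\abs{V(H)}}{\alpha(H)} \leq \chif(H)$ follows by a double-counting argument applied to an $\by{a}{b}$-coloring. Fix $b \in \N$ and let $a = \chib_b(H)$, so there is an $\by{a}{b}$-coloring $\Phi\colon V(H) \to \binom{[a]}{b}$. For each color $c \in [a]$ consider the color class $C_c = \set{v \in V(H) \given c \in \Phi(v)}$. Since adjacent vertices receive disjoint color sets, no edge has both endpoints in $C_c$. Hence every $C_c$ is an independent set and $\abs{C_c} \leq \alpha(H)$.

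Next we count the pairs $(v,c)$ with $c \in \Phi(v)$ in two ways. Each vertex contributes exactly $b$ pairs, so the total is $b\abs{V(H)}$. Grouping the pairs by color instead gives $\sum_{c\in[a]}\abs{C_c} \leq a\,\alpha(H)$. Therefore $b\abs{V(H)} \leq \chib_b(H)\,\alpha(H)$, that is, $\frac{\abs{V(H)}}{\alpha(H)} \leq \frac{\chib_b(H)}{b}$ for every $b$.

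Finally we take the infimum over $b \in \N$ in the definition $\chif(H) = \inf_{b} \frac{\chib_b(H)}{b}$. This yields the claim. Alternatively, \cref{thm:fract-chrom-rational} lets us fix $a,b$ with $\chif(H) = \frac{a}{b}$ and apply the computation once.

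There is no real obstacle here. The only point needing care is the degenerate case: if $H$ has no vertices, the statement is vacuous or by convention, and otherwise $\alpha(H) \geq 1$, so the division is legitimate.
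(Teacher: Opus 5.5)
Your proof is correct and matches the paper's argument: the color classes of an $\by{a}{b}$-coloring are independent sets, double counting gives $b\abs{V(H)} \leq a\,\alpha(H)$, and taking the infimum yields the bound. Your explicit handling of the degenerate case where $H$ has no vertices is a small addition that the paper leaves implicit.
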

\begin{proof}
	Consider an $\by{a}{b}$-coloring $\Phi\colon V(H) \to \binom{[a]}{b}$ of $H$.
	For each color $k \in [a]$, we denote by ${A(k) = \{v \in V(H) \mid k \in \Phi(v)\}}$ its color class, i.e., the vertices having color~$k$.
	Since adjacent vertices have disjoint color sets, $A(k)$ is an independent set and thus $\abs{A(k)} \leq \alpha(H)$.
	Note that $b \cdot \abs{V(H)} = \sum_{k \in [a]} \abs{A(k)} \leq a \cdot \alpha(H)$.
	Rearranging yields $\frac{\abs{V(H)}}{\alpha(H)} \leq \frac{a}{b}$.
	The bound now follows as $\chif(H)$ is the infimum of $\frac{a}{b}$ over all $\by{a}{b}$-colorings of $H$.
\end{proof}
In a proper coloring, each vertex of a clique receives a distinct color. 
That is, the size~$\omega(H)$ of a largest clique in~$H$ is a lower bound on the chromatic number.
Similarly, it provides a lower bound on the fractional chromatic number \cite{scheinermanFractionalGraphTheory2011}.
\begin{corollary}
	\label{thm:fract-chrom-clique}
	For every graph $H$ we have $\omega(H) \leq \chif(H)$.
\end{corollary}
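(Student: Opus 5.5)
The statement is \cref{thm:fract-chrom-clique}, namely $\omega(H) \leq \chif(H)$. I would derive it from \cref{thm:fract-chrom-ind} together with the monotonicity of $\chif$ from \cref{obs:fract_chi_monotone}.

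First, let $K \subseteq H$ be a largest clique, so $K \cong K_{\omega(H)}$. Since $K$ is a subgraph of $H$, \cref{obs:fract_chi_monotone} gives $\chif(K) \leq \chif(H)$. Hence it suffices to show $\chif(K_n) \geq n$ with $n = \omega(H)$.

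Second, every independent set of $K_n$ contains at most one vertex, so $\alpha(K_n) = 1$. \Cref{thm:fract-chrom-ind} then yields $\chif(K_n) \geq \abs{V(K_n)}/\alpha(K_n) = n$. Combining the two steps gives $\omega(H) \leq \chif(K) \leq \chif(H)$.

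None of these steps is a real obstacle. If one prefers not to rely on monotonicity, there is a direct argument: in any $\by{a}{b}$-coloring of $H$, the $\omega(H)$ vertices of a maximum clique receive pairwise disjoint $b$-subsets of $[a]$. Therefore $b\,\omega(H) \leq a$, that is $a/b \geq \omega(H)$, and taking the infimum over all such colorings gives the claim.
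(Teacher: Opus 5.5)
Your proof is correct and is essentially the paper's own argument: apply \cref{thm:fract-chrom-ind} to $K_n$ using $\alpha(K_n)=1$, then transfer the bound to $H$ via the monotonicity in \cref{obs:fract_chi_monotone}. Your alternative direct argument, that clique vertices need pairwise disjoint $b$-subsets so $b\,\omega(H)\leq a$, is also valid and avoids the monotonicity step.
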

\begin{proof}
    As the largest independent set of the complete graph~$K_n$ has size~$1$, \cref{thm:fract-chrom-ind} yields~$n \leq \chif(K_n)$.
	Since $\chif$ is monotone, we obtain $\omega(H) \leq \chif(H)$ for every graph $H$.
\end{proof}

Mycielski constructed triangle-free graphs with arbitrarily large chromatic number by providing a construction rule which augments the chromatic number by one in each step \cite{mycielski1955coloriage}.
Given a graph~$G$ we define~$\mycielski(G)$ as the graph obtained from~$G$ by 
\begin{itemize}
\item adding a copy~$v'$ of each vertex~$v$ and connecting~$v'$ to all neighbors of~$v$, and
\item adding a vertex~$x$ connected to all copies~$v'$, 
\end{itemize}
see \cref{fig:example_mycielski} for an example.
Mycielski showed that~$\chi(\mycielski(G)) = \chi(G)+1$ and~$\omega(\mycielski(G)) = \omega(G)$, thereby giving examples of graphs with arbitrarily large chromatic number that contain no large cliques.
In fact, their fractional chromatic number is also unbounded \cite[Theorem~3.3.4]{scheinermanFractionalGraphTheory2011}.
\begin{lemma}
\label{lem:mycielski_fractional_chrom}
    If~$G$ is a graph on at least one edge, then $\chif(\mycielski(G)) = \chif(G) + \frac{1}{\chif(G)}$ and $\chi(\mycielski(G)) = \chi(G) +1$. 
\end{lemma}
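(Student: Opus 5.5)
The plan is to prove the two equalities separately. For the chromatic number I will follow Mycielski's classical argument. Given a proper $k$-coloring of $G$, color each copy $v'$ like $v$ and give $x$ a new color; this shows $\chi(\mycielski(G)) \le \chi(G)+1$. Conversely, take a $k$-coloring of $\mycielski(G)$ with $k=\chi(\mycielski(G))$, where $x$ gets color $k$. Recolor every $v$ of color $k$ with the color of $v'$. This stays proper, since $v'$ is adjacent to all neighbors of $v$ and no neighbor of $v$ has color $k$. It yields a $(k-1)$-coloring of $G$. For the fractional part I write $c=\chif(G)=\frac{a}{b}$ using an optimal $\by{a}{b}$-coloring $\Phi$ from \cref{thm:fract-chrom-rational}, and I aim to show $\chif(\mycielski(G)) = \frac{a^2+b^2}{ab} = c+\frac1c$.

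\emph{Upper bound.} I will build an explicit $\by{(a^2+b^2)}{ab}$-coloring of $\mycielski(G)$. The palette is $[a]\times[a]$ together with $b^2$ new colors $N$. Each $v\in V(G)$ gets $\Phi(v)\times[a]$. Each copy $v'$ gets $\big(\Phi(v)\times[a-b]\big)\cup N$, of size $b(a-b)+b^2=ab$; this needs $a\ge b$, which holds. The vertex $x$ gets $[a]\times\{a-b+1,\dots,a\}$, of size $ab$. The edge checks are routine. For $uv$ and $uv'$ with $uv\in E(G)$, disjointness follows from $\Phi(u)\cap\Phi(v)=\varnothing$, and $u$ uses no color from $N$. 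For $xv'$, the second coordinates are disjoint and $x$ uses nothing from $N$. Hence $\chif(\mycielski(G))\le \frac{a^2+b^2}{ab}$.

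\emph{Lower bound.} I will generalise the counting in the proof of \cref{thm:fract-chrom-ind} to weights. Suppose $w\colon V(H)\to\mathbb{R}_{\ge0}$ satisfies $w(I)\le 1$ for every independent set $I$. For any $\by{a'}{b'}$-coloring, summing $w$ over the color classes gives $b'\,w(V(H))\le a'$, so $\chif(H)\ge w(V(H))$. By LP duality for the fractional chromatic number (see \cite{scheinermanFractionalGraphTheory2011}), $G$ has such a weighting $w$ with $w(V(G))=c$. On $\mycielski(G)$ I set
\[w'(v)=\tfrac{c-1}{c}\,w(v),\qquad w'(v')=\tfrac1c\,w(v),\qquad w'(x)=\tfrac1c.\]
Its total weight is $(c-1)+1+\frac1c=c+\frac1c$, so it remains to verify $w'(I)\le1$ for every independent set $I$ of $\mycielski(G)$.

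If $x\in I$, then $I$ contains no copies, $I\cap V(G)$ is independent in $G$, and the bound is $\frac{c-1}{c}+\frac1c=1$. If $x\notin I$, write $A=I\cap V(G)$ and $B=\{v : v'\in I\}$. Then $w'(I)=\frac{c-1}{c}w(A)+\frac1c w(B)$. The structural fact available is that no vertex of $B$ has a neighbor in $A$, because $v'$ is adjacent to $N(v)$.

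I expect this last case to be the main obstacle. The set $B$ itself need not be independent, so $w(B)$ can be as large as $c$, and the crude bound fails. What I would try first is to show $w(B)\le w(A)+c\,(1-w(A))$, i.e.\ that vertices outside $N(A)$ carry limited weight. This would follow from exhibiting an optimal fractional coloring of $G$ in which $A$ is extended within the classes. If that does not work directly, I would abandon the explicit dual weighting and prove the lower bound instead by LP duality on the primal side. That route takes an optimal fractional coloring of $\mycielski(G)$ and projects it to a fractional coloring of $G$, mimicking the recoloring argument for $\chi$ above. The goal is to show that the total weight $t$ of the classes containing $x$ forces $\chif(G)\le \chif(\mycielski(G))-t$ and $t\ge 1/\chif(G)$.
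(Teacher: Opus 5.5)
Your chromatic-number argument is correct. So are the explicit $\by{(a^2+b^2)}{ab}$-coloring, the weak-duality principle $\chif(H)\ge w(V(H))$, the weighting $w'$, and the case $x\in I$. The paper does not prove this lemma; it cites Scheinerman--Ullman, and your $w'$ is the standard fractional clique used there.

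\emph{The gap.} The case $x\notin I$ is not proved. The entire lower bound $\chif(\mycielski(G))\ge c+\frac1c$ rests on the inequality $w(B)\le c-(c-1)\,w(A)$ for every independent $A\subseteq V(G)$ and every $B\subseteq V(G)\setminus N(A)$. You only state it as something you would try.

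Your fallback does not rescue this; as formulated, its target is false. The classes containing $x$ have total weight $t\ge 1$, since $x$ must be covered. Then $\chif(G)\le\chif(\mycielski(G))-t$ would give $\chif(\mycielski(G))\ge\chif(G)+1$. That contradicts the upper bound $c+\frac1c$ you just proved, whenever $c>1$.

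\emph{How to close it.} Your first idea does work and is short.
\begin{itemize}
\item Let $C_1,\dots,C_a$ be the color classes of the optimal $\by{a}{b}$-coloring $\Phi$, and put $C_k^*=(C_k\setminus N(A))\cup A$.
\item Each $C_k^*$ is independent: $A$ and $C_k$ are independent, and no vertex outside $N(A)$ has a neighbor in $A$. Hence $w(C_k^*)\le 1$.
\item Every vertex of $A$ lies in all $a$ sets $C_k^*$. Every vertex of $V(G)\setminus(A\cup N(A))$ lies in exactly $b$ of them, and vertices of $N(A)$ lie in none. Summing over $k$ gives
\[
a\,w(A)+b\,w\bigl(V(G)\setminus(A\cup N(A))\bigr)\le a .
\]
\item Divide by $b$, and use $w\ge 0$, $A\cap N(A)=\varnothing$ and $B\subseteq V(G)\setminus N(A)$. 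This yields $w(B)\le w(A)+c\,(1-w(A))$, which is exactly your target inequality.
\item Therefore $w'(I)=\frac{c-1}{c}\,w(A)+\frac1c\,w(B)\le 1$.
\end{itemize}
With this step filled in, your proof is complete. It uses only the optimal coloring you already fixed and the duality fact you already cite.
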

\begin{figure}
    \centering
    \begin{subfigure}[t]{0.4\textwidth}
        \centering
        \includegraphics[page=1]{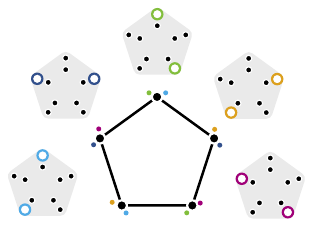}
        \caption{}
        \label{fig:examples_c5}
    \end{subfigure}
    \begin{subfigure}[t]{0.3\textwidth}
        \centering
        \includegraphics[page=2]{figures/examples_mycielski.pdf}
        \caption{}
        \label{fig:examples_mycielski_c5}
    \end{subfigure}
    \caption{(\subref{fig:examples_c5}) A $\by{5}{2}$-coloring of~$C_5$. (\subref{fig:examples_mycielski_c5}) The graph~$\mycielski(C_5)$ obtained from~$C_5$.}
    \label{fig:example_mycielski}
\end{figure}

\section{A lower bound on local bipartite covering numbers.}

In this chapter, we study local $\calB$-covers for the class~$\calB$ of all bipartite graphs.
In particular, we give a lower bound on the local $\calB$-covering number (cf. \cref{thm:fractional-chromatic-cover-bip}), from which we derive an alternative proof for a question raised by Fishburn an Hammer (cf. \cref{thm:local-compbip-kn}). 
Further, we prove $\NP$-completeness of the $2$-\probloc{$\calB$} (\cref{thm:local-bip-np}).

Recall that for the class~$\calB$ of all bipartite graphs the global $\calB$-covering number~$\cn{g}{\calB}{H}$ depends logarithmically on the chromatic number~$\chi(H)$.
In fact, for the class~$\calG_r$ of all graphs of chromatic number at most~$r$, we similarly obtain~$\cn{g}{\calG_r}{H} = \ceil{\log_r(\chi(H))}$ for every graph~$H$ \cite[Proposition~4.11]{stumpf2017ma}. 
For the local $\calG_r$ covering number, we provide a similar lower bound in the fractional chromatic number.
\begin{restatable}{theorem}{thmStructFractionalLower}
	\label{thm:fractional-chromatic-cover}
	If~$\calG$ is a graph class with $\chif(\calG) \leq r$ for some~$r \in \mathbb{R}_{>1}$, then we have $\log_r(\chif(H)) \leq \cl{\calG}(H)$ for every graph~$H$.
\end{restatable}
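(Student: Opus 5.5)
Let $k = \cl{\calG}(H)$ and fix a $k$-local $\calG$-cover $\varphi\colon G_1 \cupdot \dots \cupdot G_t \to H$. The figure caption for \cref{fig:fractional_coloring} already suggests the construction: build a fractional coloring of~$H$ as a "product" of fractional colorings of the guests. By \cref{thm:fract-chrom-rational}, each guest~$G_i$ admits an $\by{a_i}{b_i}$-coloring~$\Phi_i$ with $a_i/b_i = \chif(G_i) \leq r$. The difficulty is that a vertex lies in only some guests, so a naive product over all $i\in[t]$ gives a bound like $r^t$ rather than $r^k$. To fix this, I will pad each vertex's contribution so that guests not containing~$v$ contribute "all colors" in a normalized way. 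Since a global count of $t$ must not appear, I normalize: for a vertex $v$ not hit by $G_i$, allow it the full set $[a_i]$ in coordinate~$i$. This is harmless for edges because every edge $uv$ is covered by some $G_i$ containing both endpoints, and there the sets $\Phi_i(u)$ and $\Phi_i(v)$ are disjoint.

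Concretely, define the color palette $[a_1]\times\dots\times[a_t]$ and assign to $v$ the set $\Phi(v)=\prod_i S_i(v)$, where $S_i(v)=\Phi_i(\varphi_i^{-1}(v))$ if $G_i$ hits $v$ and $S_i(v)=[a_i]$ otherwise. For an edge $uv$ covered by $G_i$, the $i$-th factors are disjoint, so $\Phi(u)\cap\Phi(v)=\varnothing$. The size of $\Phi(v)$ is $\prod_{i\colon v\in G_i} b_i \prod_{i\colon v\notin G_i} a_i$, which is not uniform over~$v$. Hence the next step is to either equalize the sizes or use a weighted version of fractional coloring. I would pass to the LP/weighted formulation: $\chif(H)$ is at most $a/b$ whenever there is a family of independent sets (colors) such that each vertex is in at least $b$ of them out of $a$ total. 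Taking subsets is allowed because a vertex with more colors can discard some, which keeps the coloring proper. So each vertex receives at least $\min_v |\Phi(v)|$ colors. Then
\[\frac{\prod_i a_i}{\min_v \prod_{i\colon v\in G_i} b_i \prod_{i\colon v\notin G_i} a_i} = \max_v \prod_{i\colon v\in G_i}\frac{a_i}{b_i}\leq r^{\hit{\varphi}{v}}\leq r^k.\]
Trimming every $\Phi(v)$ to exactly the minimum size yields an honest $\by{a}{b}$-coloring with $a/b\le r^k$. Therefore $\chif(H)\le r^k$, and taking $\log_r$ (valid since $r>1$) gives the claim.

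The main obstacle is this normalization step: a vertex outside $G_i$ must receive a full factor $a_i$, so that the ratio only accumulates over guests actually hitting~$v$. The trimming argument (removing colors preserves properness) is what makes non-uniform set sizes acceptable. One must also check that guests with no edges, or with $\chif(G_i)<1$ issues, cause no trouble. Every guest with at least one vertex has $a_i/b_i\ge1$, and guests hitting $v$ contribute at most $r$ each, so the bound holds.
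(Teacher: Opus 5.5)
Your proof is correct and follows the paper's argument step for step. It uses the product palette $[a_1]\times\dots\times[a_t]$, gives the full factor $[a_i]$ to vertices not hit by $G_i$, gets disjointness from the coordinate of a guest covering the edge, trims every color set to the minimum size, and bounds $\prod_{i\colon v\in G_i} a_i/b_i \le r^{k}$.
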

\begin{proof}[Proof of \cref{thm:fractional-chromatic-cover}]
	Consider a $k$-local $\calG$-cover $\varphi\colon G_1 \cupdot \dots G_t \to H$ of~$H$ with $k = \cn{l}{\calG}{H}$.
    As~$\chif(\calG) \leq r$, there is for each~$G_i$ an $\by{a_i}{b_i}$-coloring $\Phi_i \colon V(G_i) \to \binom{[a_i]}{b_i}$ with $\frac{a_i}{b_i} \leq \chi(G_i) \leq r$ (cf. \cref{thm:fract-chrom-rational} and \cref{obs:fract_chi_monotone}).
    We now construct a vertex-coloring~$\Phi$ of~$H$ where each vertex is assigned a subset of colors from~$A = [a_1] \times \cdot \times [a_t]$.
    A color~$(x_1, \dots, x_t) \in A$ lies in~$\Phi(v)$ for a vertex~$v \in V(H)$ if and only if for each~$i \in [t]$ the entry~$x_i$
    \begin{itemize}
    \item is a color of~$\Phi_i(v)$ when $v$ is hit by~$G_i$
    \item or any color of~$[a_i]$ otherwise,
    \end{itemize}
    see \cref{fig:example_fractional_chrom} for an example.
    Note that here the sizes of the sets~$\Phi(v)$ may differ.
    Yet, for each edge~$uv\in E(H)$ the color sets~$\Phi(u)$ and~$\Phi(v)$ are distinct.
    Indeed, as the edge~$uv$ is covered by some graph~$G_i$, we have $\Phi_i(u) \cap \Phi_i(v) = \varnothing$ while $u$ and~$v$ are both hit by~$G_i$.
    Thus, any two colors~$x \in \Phi(u)$ and~$y \in \Phi(v)$ differ in the $i$-th entry.
    That is, restricting all colors sets~$\Phi(v)$ to the same size yields an $\by{\abs{A}}{b}$-coloring~$\Phi'$ where $b$ is the smallest size among all sets~$\Phi(v)$.

	For a vertex $v \in V(H)$, consider the set $I_v = \{i \in [t] \mid v \in V(G_i)\}$.
	Note that by construction, we have $\abs{\Phi(v)} = \prod_{i \in [t] - I_v} a_i \prod_{i \in I_v} b_i$ and $\abs{A} = \prod_{i \in [t]} a_i$.
	Since the cover $\varphi$ is $k$-local and $\frac{a_i}{b_i} \leq r$ for every $i \in [t]$, we obtain
	\begin{align*}
		\frac{\abs{A}}{\abs{\Phi(v)}} = \prod_{i \in I_v} \frac{a_i}{b_i} \leq r^{\abs{I_v}} \leq r^k.
	\end{align*}
	As this holds in particular for the vertex with smallest color set, the $\by{\abs{A}}{b}$-coloring $\Phi'$ certifies $\chif(H) \leq r^k$ which implies ${\log_r(\chif(H)) \leq k = \cl{\calG}(H)}$.
\end{proof}

\begin{remark}
    The bound on the local covering number has first been shown by Schwebler in a preliminary work on which this paper is based \cite{schweblerGraphCoveringAlgorithnms2026}.
    Most recently, Gujgiczer, Marits, Ozeki independently obtained the same lower bound for the \emph{global} $\calG$-covering number~$\cg{\calG}(H)$ for the graph class~$\calG$ of graphs with fractional chromatic number at most~$r$ \cite{gujgiczerCoverNumbersGraph2026}. 
    As~$\cl{\calG}(H) \leq \cg{\calG}(H)$, the above theorem provides a strengthening.
\end{remark}
Note that \cref{thm:fractional-chromatic-cover-bip} directly follows from \cref{thm:fractional-chromatic-cover} as $\chif(G) \leq 2$ for every bipartite graph~$G$ (cf. \cref{obs:fract_chi_monotone}).
\fractionalChromaticCoverBip*
However, in contrast to the global setting, while the lower bound is tight (the complete graph~$K_n$ provides an example, see \cref{thm:local-compbip-kn}), in general it does not coincide with the local $\calB$-covering number (cf. \cref{thm:fractional-chromatic-cover-bip-not-tight}).

Fishburn and Hammer asked in \cite[p.\,147]{fishburn1996BipartiteDimension} whether $\cn{l}{\calB}{K_n} = \ceil{\log(n)}$ for every~$n$, a question which has previously been answered positively by Hansel \cite{hansel1964NombreMinimal} and Krichevskii \cite{Krichevskii1963ComplexityOC} and Radhakrishnan \cite{radhakrishnanen2001tropy}.
An explicit proof is presented by Dong and Liu \cite[Theorem~3.1]{dong2017decompositionIntoCompleteBipartite}, where it is proven explicitly for $\calB$-decompositions of~$K_n$ (the proof takes roughly two pages, but does not require any previous work or theory). The authors note that similar arguments yield the same result for covers.
In fact, \cref{thm:fractional-chromatic-cover-bip} also yields a proof for this result.
\DongLiuClCgCompbipKn*
\begin{proof}
	By \cref{thm:fract-chrom-clique} and \cref{obs:fract_chi_monotone}, we have $\chif(K_n) = n$, and thus \cref{thm:fractional-chromatic-cover-bip} yields $\ceil{\log_2(n)} \leq \cl{\calB}(K_n) \leq \cl{\calBc}(K_n)$ as each cover with complete bipartite graphs is also a bipartite cover.	
    To obtain the upper bound, we show that $\cg{\calBc}(K_n) = \cg{\calB}(K_n)$.
    The upper bound then follows from $\cl{\calBc}(K_n) \leq \cg{\calBc}(K_n) = \cg{\calB}(K_n) = \ceil{\log_2(n)}$ (see \cref{lem:cover_with_log_chi_bipartite_graphs}).

    Clearly, $\cg{\calB}(K_n) \leq \cg{\calBc}(K_n)$. 
    On the other hand, each $\calB$-cover~$\varphi\colon B_1 \cupdot \dots \cupdot B_t$ of~$K_n$ can be extended to a $\calBc$-cover by adding missing edges to each of the graphs~$B_i$.
    Indeed, the vertices of each graph~$B_i$ induce a complete bipartite graph~$B_i'$ of~$K_n$.
    That is, the graphs~$B_1', \dots, B_t'$ form a $\calBc$-cover of~$K_n$ and $\cg{\calB}(K_n) \geq \cg{\calBc}(K_n)$ follows.
\end{proof}
Note in particular, that above theorem shows that the lower bound of \cref{thm:fractional-chromatic-cover-bip} is tight as $\chif(K_n) = n$.

However, in general the lower bound obtained in \cref{thm:fractional-chromatic-cover} does not coincide with the local~$\calB$-covering.
To prove this, we show how the local $\calB$-covering number of a graph~$H$ changes when adding a \emph{universal vertex} to~$H$, that is by adding a vertex~$w$ connected to each vertex of~$H$.
We denote the resulting graph by~$H \join w$. 
\begin{lemma}
	\label{thm:clbip-join-k1}
    For every graph~$H$, we have~$\cl{\bip}(H \join w) \leq 2$ if and only if $\chi(H) \leq 3$.
\end{lemma}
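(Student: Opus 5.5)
We prove the two directions separately. The ``if'' direction is an explicit construction. The ``only if'' direction analyses the at most two guests that contain the universal vertex~$w$.

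\emph{If $\chi(H)\le 3$, then $\cl{\bip}(H\join w)\le 2$.} Let $V_1,V_2,V_3$ be the colour classes of a proper $3$-colouring of~$H$. We use two guests.
\begin{itemize}
\item Let $B_1$ be the subgraph of $H\join w$ with bipartition $(\{w\}\cup V_1,\; V_2\cup V_3)$, containing all edges of $H\join w$ between the two sides. It is bipartite, since we only take crossing edges.
\item Let $B_2$ be defined the same way with bipartition $(\{w\}\cup V_2,\; V_1\cup V_3)$.
\end{itemize}
Every edge of~$H$ joins two different classes, so it crosses $B_1$ or~$B_2$. Every edge $wv$ is covered as well: by $B_1$ if $v\in V_2\cup V_3$, and by $B_2$ if $v\in V_1$. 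Each vertex, including~$w$, lies in exactly these two guests. Hence the cover is $2$-local.

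\emph{If $\cl{\bip}(H\join w)\le 2$, then $\chi(H)\le 3$.} Fix a $2$-local $\bip$-cover~$\varphi$. We may assume $w$ lies in exactly two guests $G_1,G_2$; otherwise duplicate the single one. For $i\in\{1,2\}$, fix a $2$-colouring $c_i$ of~$G_i$ with $c_i(w)=0$.

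\emph{Step 1 (side condition).} Every edge $vw$ is covered by $G_1$ or~$G_2$. Hence each $v\in V(H)$ lies in some $G_i$ with $c_i(v)=1$.

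\emph{Step 2 (partition).} Split $V(H)$ into three sets:
\begin{itemize}
\item $X$: vertices that lie in $G_1$ with $c_1=1$ and not in~$G_2$;
\item $Y$: the symmetric set, lying in $G_2$ with $c_2=1$ and not in~$G_1$;
\item $Z$: vertices lying in both $G_1$ and~$G_2$.
\end{itemize}

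\emph{Step 3 (the set $X\cup Y$ is $2$-colourable).} Take an edge $uv$ inside $X\cup Y$. It cannot lie in~$G_1$: if both endpoints are in~$X$, they share the colour $c_1=1$, and vertices of $Y$ are not in~$G_1$. By symmetry it cannot lie in~$G_2$. So it is covered by a third guest. Every vertex of $X\cup Y$ lies in exactly one guest besides $G_1,G_2$. Therefore these third guests are vertex-disjoint on $X\cup Y$, and their union is bipartite. This gives a proper $2$-colouring $\psi$ of $H[X\cup Y]$.

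\emph{Step 4 (the set $Z$).} Vertices of~$Z$ use up both their hits on $G_1$ and~$G_2$. So every edge at a $Z$-vertex lies in $G_1\cup G_2$, and on~$Z$ the pair $(c_1,c_2)\in\{(1,0),(0,1),(1,1)\}$ is fully constrained.

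\emph{Step 5 (main obstacle: merging into three colours).} The remaining step is to combine $\psi$ on $X\cup Y$ with $(c_1,c_2)$ on~$Z$ into a proper $3$-colouring. The available facts are:
\begin{itemize}
\item an edge from~$Z$ to~$X$ lies in~$G_1$, so its $Z$-endpoint has $c_1=0$;
\item an edge from~$Z$ to~$Y$ lies in~$G_2$, so its $Z$-endpoint has $c_2=0$;
\item a vertex with $(c_1,c_2)=(1,1)$ has all its neighbours on the $w$-side of the respective guest.
\end{itemize}
My first attempt would be the following assignment:
\begin{itemize}
\item give all of $Z_{11}=\{z:(c_1,c_2)=(1,1)\}$ colour~$3$;
\item recolour $Z_{10}$ and $Z_{01}$ into the two $\psi$-colours, choosing signs so that conflicts are excluded by the facts above.
\end{itemize}
If this direct assignment fails, I would first normalise the cover: make each guest complete between its sides wherever $H\join w$ permits, then shift vertices of~$Z$ into $X$ or~$Y$ when they do not need both guests. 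After that the case analysis should become clean.
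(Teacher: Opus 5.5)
You have a genuine gap in the hard direction. Your ``if'' direction is correct; it is an explicit instance of the Harary--Hsu--Miller bound the paper invokes. Steps~1--4 are also sound, after two small repairs. First, if $w$ lies in only one guest you must not duplicate that guest, since this raises the hitcount of every other vertex in it, possibly to~$3$; take $G_2$ to be the one-vertex graph on~$w$ instead. Second, in Step~3 a vertex of $X\cup Y$ lies in \emph{at most} one further guest, not exactly one.

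The real problem is Step~5. It carries the entire content of the hard direction, it is left open, and the assignment you propose cannot work. Giving colour~$3$ only to $Z_{11}$ and two colours to everything else requires $H-Z_{11}$ to be bipartite, which fails in general. Take $H=K_3$ on $z,x_0,x_1$ and cover $K_4=H\vee w$ by:
\begin{itemize}
\item $G_1$, complete between $\{w,z\}$ and $\{x_0,x_1\}$;
\item $G_2$, the edge $wz$;
\item a third guest, the edge $x_0x_1$.
\end{itemize}
This cover is $2$-local, with $Z=Z_{01}=\{z\}$, $Z_{11}=\emptyset$, $X=\{x_0,x_1\}$ and $Y=\emptyset$. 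So your scheme asks for a $2$-colouring of a triangle. Your fallback normalisation changes nothing here. Both $G_1$ and $G_2$ are already complete between their sides, and $z$ genuinely needs both guests: $wz$ cannot lie in $G_1$, and $w$ has no hit to spare for a third guest.

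The missing idea is that colour~$3$ must be shared by $Z_{11}$ and one \emph{whole} $\psi$-class, while the other $\psi$-class is split according to $X$ versus $Y$. Concretely, use the classes $Z_{11}\cup\psi^{-1}(1)$, $Z_{10}\cup(X\cap\psi^{-1}(0))$ and $Z_{01}\cup(Y\cap\psi^{-1}(0))$. Each is independent using only facts you already have:
\begin{itemize}
\item There is no edge inside any $Z_{ab}$: such an edge lies in $G_1$ or $G_2$, but its endpoints agree in both $c_1$ and $c_2$.
\item By your first two bullets, $Z_{11}$ has no neighbour in $X\cup Y$, $Z_{10}$ has none in $X$, and $Z_{01}$ has none in $Y$.
\item $\psi$ is proper on $H[X\cup Y]$.
\end{itemize}

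This is the paper's argument in different form. It merges all third guests into one bipartite $B_3$ and records each vertex's side in $B_1,B_2,B_3$ as a label in $\{0,1,\star\}^3$. Only seven labels can occur, and the paper $3$-colours the seven-vertex label graph. In that graph, $Z_{01}$ together with the two $B_3$-sides of $X$ forms a triangle, which is exactly what rules out your assignment. Up to renaming colours and swapping the sides of $B_3$, the colouring above is the only one.
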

\begin{proof}
	We start with the easy direction: If $\chi(H) \leq 3$, then $\chi(H \join w) \leq 4$ and thus we get $\cl{\bip}(H \join w) \leq \cg{\bip}(H \join w) = \ceil{\log(\chi(H \join w))} \leq 2$ by \cref{lem:cover_with_log_chi_bipartite_graphs}.
	
	For the other direction, suppose that $H' = H \join w$ has $2$-local $\bip$-cover $\varphi\colon B_1 \cupdot \dots \cupdot B_t \to H'$.
	We need to show that $H$ has a $3$-coloring.
    As~$\varphi$ is $2$-local, we may assume that all edges incident to the universal~$w$ are covered by~$B_1$ and~$B_2$.
	That is, each vertex of $H'$ is hit by at least one of~$B_1$ and~$B_2$.
	Thus, the graph $H' - E(B_1) - E(B_2)$ has a $1$-local $\bip$-cover and is therefore bipartite.
	It follows that the edges of $H' - E(B_1) - E(B_2)$ can be covered by a single graph~$B_3$.
	Without loss of generality, we may thus assume that $\varphi$ consists only of the graphs~$B_1, B_2, B_3$ and that each vertex of $H'$ is hit by exactly two guests, see \cref{fig:universal_cover}.
	
	For each $i \in [3]$, let $X_i$ and $Y_i$ denote the parts of $B_i$ such that $w \in X_1$ and $w \in X_2$.
	To every vertex~$v$ of $H'$, we assign a label~$p(v) = (p_1(v),p_2(v),p_3(v))$ based on the membership to the parts of the graphs~$B_i$ where
	\begin{align*}
		p_i(v) = \begin{cases}
			0 & \text{if } v \in X_i \\
			1 & \text{if } v \in Y_i \\
			\star & \text{if } v \notin V(B_i).
		\end{cases}
	\end{align*}
	By definition, we have $p(w) = (0,0,\star)$ and for each edge $uv \in E(H')$ there is an $i \in [3]$ with ${\{p_i(u), p_i(v)\} = \{0, 1\}}$.
    Recall that each vertex~$v$ of~$H$ is hit exactly twice and connected with an edge to~$w$.
    That is, exactly one of the entries~$p_i(v)$ is a $\star$, and at least one of the entries~$p_1(v)$ and~$p_2(v)$ equals~$1$.
    Thus, there are seven possible labels for vertices of~$H$: $(0,1,\star)$, $(1,0,\star)$, $(1,1,\star)$, $(1,\star,0)$, $(1,\star,1)$, $(\star,1,0)$ and $(\star,1,1)$.
	Consider the graph $L$ with these seven labels as vertices and an edge between the labels $(v_1, v_2, v_3)$ and $(u_1, u_2, u_3)$ if there is some $i \in [3]$ with $\{v_i, u_i\} = \{0, 1\}$.
	This graph $L$ has a $3$-coloring $\Psi\colon V(L) \to [3]$ as illustrated in \cref{fig:universal_coloring_auxiliary_graph}.
	From this we obtain a $3$-coloring $\Psi'\colon V(H) \to [3]$ of $H$ by setting $\Psi'(v) = \Psi(p(v))$, see \cref{fig:universal_coloring} for an example.
	\begin{figure}
        \begin{subfigure}[t]{0.3\textwidth}
            \centering
            \includegraphics[page=1]{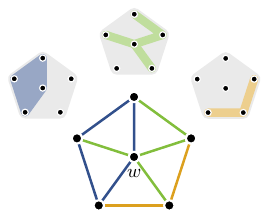}
            \caption{}
            \label{fig:universal_cover}
        \end{subfigure}\hfill
        \begin{subfigure}[t]{0.3\textwidth}
            \centering
            \includegraphics[page=2]{figures/example_universal.pdf}
            \caption{}
            \label{fig:universal_coloring}
        \end{subfigure}\hfill
        \begin{subfigure}[t]{0.3\textwidth}
            \centering
            \includegraphics[page=3]{figures/example_universal.pdf}
            \caption{}
            \label{fig:universal_coloring_auxiliary_graph}
        \end{subfigure}\hfill
		\caption{(\subref{fig:universal_cover}) A $2$-local cover of~$C_5 \join w$. (\subref{fig:universal_coloring}) The resulting proper $3$-vertex-coloring of~$C_5$ based on the labels of~$C_5 \join w$. (\subref{fig:universal_coloring_auxiliary_graph}) A proper $3$-vertex coloring of the graph~$L$ representing the labels.}
        \label{fig:universal_augmentation}
	\end{figure}
\end{proof}

\begin{corollary}
	\label{thm:fractional-chromatic-cover-bip-not-tight}
    There is a graph~$H$ such that $\cl{\bip}(H) > \ceil{\log(\chif(H))}$.
\end{corollary}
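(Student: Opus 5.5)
We need a graph $H$ with $\chi(H)=4$ but $\chif(H)\le 4$, i.e. $\ceil{\log_2(\chif(H))}\le 2$, together with $\cl{\bip}(H)\ge 3$. The plan is to take $H = G \join w$ with $\chi(G)=4$ and $\chif(G)$ small. By \cref{thm:clbip-join-k1}, $\chi(G)>3$ gives $\cl{\bip}(G\join w)\ge 3$. Adding a universal vertex raises the fractional chromatic number by exactly one, $\chif(G\join w)=\chif(G)+1$. One can see this by giving $w$ a fresh block of $b$ colors in an $\by{a}{b}$-coloring; the lower bound is not needed. So it suffices to find $G$ with $\chi(G)=4$ and $\chif(G)\le 3$. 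Then $\chif(H)\le 4$, so $\ceil{\log_2\chif(H)}\le 2<3\le\cl{\bip}(H)$.

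For $G$ take the Grötzsch graph $\mycielski(C_5)$. By \cref{lem:mycielski_fractional_chrom}, applied with $\chi(C_5)=3$, we get $\chi(\mycielski(C_5))=4$. For the fractional bound we need $\chif(C_5)=\tfrac52$. The upper bound comes from the $\by{5}{2}$-coloring in \cref{fig:examples_c5}. The lower bound follows from \cref{thm:fract-chrom-ind} with $\alpha(C_5)=2$. The same lemma then gives
\[
\chif(\mycielski(C_5)) = \tfrac52+\tfrac25=\tfrac{29}{10}\le 3 .
\]
Hence $\chif(H)\le \tfrac{39}{10}<4$ and $\ceil{\log_2\chif(H)}=2$. (Also $\chif(H)>2$, since $H$ contains a triangle, so the ceiling is exactly $2$.)

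The only genuine step is the join identity, and only its upper bound is required. Given an optimal $\by{a}{b}$-coloring of $G$, which exists by \cref{thm:fract-chrom-rational}, assign $w$ the color set $\{a+1,\dots,a+b\}$. This is an $\by{(a+b)}{b}$-coloring of $G\join w$, so $\chif(G\join w)\le \tfrac ab+1$. With that in hand, the corollary follows from \cref{thm:clbip-join-k1} and the observations above.
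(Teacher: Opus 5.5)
Your proof is correct and follows the paper's argument essentially step for step. You use the same host $\mycielski(C_5)\join w$, the same bound $\chif(H)\le\frac{39}{10}$ from giving $w$ a fresh block of colors on top of a $\by{29}{10}$-coloring, and the same application of \cref{thm:clbip-join-k1} with $\chi(\mycielski(C_5))=4$. One small slip: your opening sentence should ask for $\chi(G)=4$ (so $\chi(H)=5$), because $\chi(H)=4$ would force $\cl{\bip}(H)\le\cg{\bip}(H)=2$ by \cref{lem:cover_with_log_chi_bipartite_graphs}.
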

\begin{proof}
	Let $H = \mycielski(C_5) \join w$.
    As~$C_5$ admits a~$\by{5}{2}$-coloring (cf. \cref{fig:examples_c5}) and its largest independent set has size~$2$, we obtain $\chif(C_5) = \frac{5}{2}$ by \cref{thm:fract-chrom-ind}.
	By \cref{lem:mycielski_fractional_chrom}, we get $\chif(\mycielski(C_5)) = \frac{5}{2} + \frac{2}{5} = \frac{29}{10}$.
    Using ten new colors for the color set of~$w$ yields a $\by{39}{10}$-coloring of~$H$ certifying $\chif(H) \leq \frac{39}{10} \leq 4$.
	So we obtain a lower bound of $\ceil{\log(\chif(H))} \leq 2$.
	However, $\mycielski(C_5)$ is not $3$-colorable (cf. \cref{lem:mycielski_fractional_chrom}), thus \cref{thm:clbip-join-k1} yields $\cl{\bip}(H) > 2$.
\end{proof}
While above corollary shows that the lower bound obtained in \cref{thm:fractional-chromatic-cover-bip} is not tight, we do not know whether there exists an upper bound on the local $\calB$-covering number in terms of the fractional chromatic number, see \cref{discussion:fractional_chromatic} for a discussion.

In fact, the construction provided in \cref{thm:clbip-join-k1} directly yields $\NP$-hardness of the $2$-\probloc{$\calB$} by a reduction from the $3$-vertex coloring problem, which is known to be $\NP$-hard \cite{stockmeyer3-Colorability1973}.
$\NP$-membership follows from \cref{obs:NP-membership}.
\localBcoveringNPC*

\section{Complexity of covering problems with complete and complete bipartite graphs}
\begin{table}[ht]
	\ra{1.3}
	\def\smallDist{3}
	\def\largeDist{8}
	\small
	\centering
	\begin{subtable}[t]{\textwidth}
		\centering
		\begin{tabularx}{0.5\textwidth}{c *{2}{Y}}
        \toprule
		\arrayrulecolor{white}
			 & \multicolumn{1}{Y}{bicliques $\calBc$} & \multicolumn{1}{Y}{cliques $\compl$}  \\
			global & \multicolumn{1}{|c|}{\cellcolor{colorNP}\shortref{thm:global-clique-bc-np}, \cite{orlin1977contentment,mullerEdgePerfectnessClasses1996}} &
			\multicolumn{1}{|c|}{\cellcolor{colorNP}\shortref{thm:global-clique-bc-np}, \cite{orlin1977contentment,kouCoveringEdgesCliques1978}}  \\
			\cmidrule{1-3}
			local &  \multicolumn{1}{|c|}{\cellcolor{colorNP}\shortref{thm:local-bc-np}} &
			\multicolumn{1}{|c|}{\cellcolor{colorNPK}\shortref{thm:local-clique-np}, \cite{poljak1981complexityRepresentation}{\protect\footnotemark}}  \\
			\arrayrulecolor{black}\bottomrule
		\end{tabularx}
		
		\medskip
		\caption{\;\;  \textcolor{colorNP}{$\blacksquare$} weakly $\NP$-hard \;\; \textcolor{colorNPK}{$\blacksquare$} strongly $\NP$-hard }
	\end{subtable}
	\caption{Overview of the complexity results obtained in this chapter.
	The cells correspond to the \glob{$\calG$} and \loc{$\calG$} problem where $\calG$ depends on the column.
	Numbers $\langle X \rangle$ refer to the corresponding result.
	}
	\label{tab:partial}
\end{table}
\footnotetext{They show the stronger result that the $k$-\probloc{$\compl$} is $\NP$-hard for every fixed $k \geq 4$.}

The study of global and local $\calBc$- and $\calK$-covering numbers for the class~$\calBc$ of all complete bipartite and the class~$\calK$ of all complete graphs led Fishburn and Hammer to ask about the complexity of the corresponding decision problems \cite[p.\,148]{fishburn1996BipartiteDimension}.
While both global variants have subsequently been shown to be $\NP$-complete (cf. \cite[Theorem~8.1]{orlin1977contentment} \cite[Proposition~3]{kouCoveringEdgesCliques1978} for~$\calK$ and \cite[Theorem~6]{mullerEdgePerfectnessClasses1996}\cite[Theorem~8.1]{orlin1977contentment} for~$\calBc$), in the local setting, this has only been proven to be the case for the class~$\calK$ \cite[Theorem~2.1]{poljak1981complexityRepresentation}.
The main goal of this chapter is to prove that the same holds for the class $\biclique$ of all complete bipartite graphs.
\begin{restatable}{theorem}{localBicliqueNP}
	\label{thm:local-bc-np}
	The \probloc{$\biclique$} is \npc.
\end{restatable}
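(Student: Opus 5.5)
Membership in $\NP$ follows from \cref{obs:NP-membership}, since being complete bipartite can be checked in polynomial time; the work lies in $\NP$-hardness. Following the framework announced in the introduction, I would go in two stages. First I show that the \probpartloc{$\biclique$} is $\NP$-hard. Then I give a polynomial reduction from the partial problem to the ordinary \probloc{$\biclique$}, so that only the edges of $S$ are forced to be covered.

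For the first stage I would adapt Orlin's reduction for the global partial problem. A natural source is a problem on bipartite hosts where optional edges may be used freely by bicliques. Given a bipartite graph $H$, take $S=E(H)$ and make all "missing" pairs between the two sides optional. Then every biclique on any vertex subsets is available, and a $k$-local $S$-partial $\biclique$-cover corresponds to assigning each vertex at most $k$ "biclique-sides". A cleaner route is to reduce from the local clique problem, which is already known to be $\NP$-complete (Poljak, Rödl, Turzík, \cite{poljak1981complexityRepresentation}). Given $(G,k)$, build the bipartite double cover $H$ with parts $\{v_1\},\{v_2\}$. Let $S$ consist of the edges $u_1v_2$ for $uv\in E(G)$, together with the matching edges $v_1v_2$. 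Let the optional edges be all other pairs $u_1v_2$ with $u\neq v$ that we choose to allow, namely all of them.

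The matching edges are what make bicliques behave like cliques. A biclique $X_1\times Y_2$ may contain a nonedge of $G$ only on optional edges, and symmetrizing a clique $C$ of $G$ into $C_1\times C_2$ covers all $S$-edges inside $C$ with hitcount one per copy. Conversely, I would try to extract from a $k'$-local partial biclique cover of $H$ a local clique cover of $G$ with $k'$ related affinely to $k$. I expect this correspondence, that is, controlling the hitcounts in the backward direction, to be the main obstacle. If it fails, I would instead mimic Orlin's original gadget for the global case and check that its hitcounts are uniform.

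For the second stage, given $(H,S,k)$ with $H$ bipartite, I would replace each optional edge by a gadget that is "cheap" to cover. The gadget should not enlarge hitcounts of the original vertices beyond a fixed additive shift $c$. A plausible construction attaches to every vertex $v$ a private pendant structure: a large biclique $K_{m,m}$ with $m$ large, joined through $v$. This forces exactly $c$ guests at $v$ to be spent on the gadget while also permitting those guests to absorb optional edges. The main verification is the backward direction: from any $(k+c)$-local cover of the new host, we must recover a $k$-local $S$-partial cover of $H$. This requires an exchange argument showing that guests using gadget edges can be normalized. Combining both stages then gives $\NP$-hardness of the \probloc{$\biclique$}.
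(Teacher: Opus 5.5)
\textbf{There is a genuine gap in each stage.}

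\textbf{Stage~1.} Your construction is degenerate. Every pair $u_1v_2$ is declared either an $S$-edge or an optional edge, so the host is $K_{n,n}$. The single guest $V_1\times V_2$ is then a $1$-local $S$-partial $\calBc$-cover for every input $G$, and the reduction sends all instances to yes-instances. Your first idea collapses the same way: a bipartite host with all cross pairs optional is complete bipartite. Even if you drop the optional edges, the backward direction you flag as the main obstacle stays unproven, and the natural correspondence fails. Bicliques of the double cover need not have the form $C_1\times C_2$ (e.g.\ the stars $\{u_1\}\times N[u]_2$), so their hitcounts do not translate into hitcounts of a clique cover of $G$.

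The missing idea is to keep the host non-bipartite. For an arbitrary graph $H$ on $n$ vertices, take the instance $(K_n,E(H),2)$. Restricting a biclique of $K_n$ to $E(H)$ gives a bipartite subgraph of $H$. Conversely, the bipartition of any bipartite guest of $H$ spans a biclique of $K_n$ on the same vertices. Hence $2$-local $E(H)$-partial $\calBc$-covers of $K_n$ correspond exactly to $2$-local $\calB$-covers of $H$. Hardness then comes from the $2$-\probloc{$\calB$}, which is $\NP$-hard since $\cl{\calB}(H\join w)\le 2$ if and only if $\chi(H)\le 3$ (\cref{thm:clbip-join-k1}).

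\textbf{Stage~2.} This is where the real difficulty lies, and the pendant $K_{m,m}$ cannot handle it. It is itself a biclique, so it puts no pressure on the hitcount bound. Moreover, any guest containing a pendant edge at $v$ and an edge $vw$ of $H$ is a star centered at $v$. So pendant edges can be absorbed into guests that are needed anyway, and nothing forces $c$ extra hits at $v$. The obstruction also lives on the optional edges, not on the vertices. Each optional edge $vw$ must now be covered, costing a hit at both $v$ and $w$. Nothing in your construction stops a cover from absorbing many optional edges into a few large bicliques together with $S$-edges, so your exchange argument has nothing to rest on.

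The paper proceeds as follows:
\begin{itemize}
\item It pads $H$ with pendant optional edges so that every vertex of $H$ is incident to exactly $\ell-k$ optional edges.
\item It attaches to each optional edge $e$ an $\ell$-local $\calBc$-gadget $(F,e)$. Here $F$ has an $\ell$-local cover hitting the ends of $e$ only once, and every $\ell$-local cover of every extension contains a guest inside $F$ covering $e$. Removing these private guests lowers each hitcount in $H$ by at least $\ell-k$.
\item The gadget is the $\calBc$-blowup of a graph $H_0$ with $\cl{\calBc}(H_0)=\ell-1$ exactly. Its forcing property comes from $\cl{\calBc}(F-xy)\ge \ell+1$, proved with the pigeonhole lemma on star-blowups (\cref{lem:star-blowup-pigeon,lem:bc-local-gadget}).
\item Polynomial size comes from taking $H_0=G^{(p)}$, the $C_4$-free Erd\H{o}s--R\'enyi--S\'os graph. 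There every biclique is a star, so $\cl{\calBc}(G^{(p)})=\frac{p+3}{2}$. Bertrand's postulate then gives an admissible $\ell\ge k+\abs{V(H)}$.
\end{itemize}
What your sketch lacks is a gadget whose local covering number is tight, so that the hitcount bound itself forces a private guest onto each optional edge.
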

We present a framework-based approach within which we can reformulate Orlin's proof for the global $\calBc$- an $\calK$-covering problems using the same ideas as in their work \cite{orlin1977contentment}.
The framework evolves around \emph{$S$-partial $\calG$-covers}, see \cref{par:S-partial-covers} for a definition.
We prove $\NP$-hardness of the \textsf{global-} and \textsf{local-}$\calK$- and $\calBc$-\textsf{covering} problem via a reduction from the respective partial variant, see \cref{tab:partial} for an overview.
\thmGlobLocCompbipBip*

The approach unifies the proofs of the four results and the framework may be applicable to other graph classes~$\calG$.
Yet, note that the proofs given in \cite[Theorem~8.1]{orlin1977contentment} for showing $\NP$-completeness of the global $\calBc$- and~$\calK$-covering problem  (roughly one and a half pages), and in \cite[Theorem~2.1]{poljak1981complexityRepresentation} (roughly half a page) for the local $\calK$-covering problem are shorter.

Proving $\NP$-hardness of the \probloc{$\biclique$} turns out to be surprisingly difficult.
While our proof idea is essentially based upon Orlin's proof in the global setting, the local variant requires a more detailed analysis. 
To unify the proofs for the $\calK$- and the $\calBc$-covering problem, we use a framework approach for showing $\NP$-completeness of a $\calG$-covering problem which consists of two steps:
\begin{enumerate}[label=\enumstyle{(\roman*)}, ref=\roman*]
    \item proving $\NP$-completeness of the partial-$\calG$-covering problem (\cref{subsec:partial_covering_NP})
    \item constructing what we call a \emph{$\calG$-gadget} for the class~$\calG$ (\cref{subsec:bc-clique-global,subsec:bc-clique-local}).
\end{enumerate}
The properties of the gadget are such that attaching it to each optional edge~$e \in E(H)-S$ of a host graph~$H$ reduces an instance~$(H,S,k)$ of the partial-$\calG$-covering problem to an instance~$(H',k')$ of the $\calG$-covering problem.
The gadget ensures that in each $\calG$-cover of~$H'$ every optional edge~$e \in E(H)-S$ is covered by some guest~$G$ which covers no other edge of~$H$. 
We provide such a framework both for the global and the local setting.
To exemplify how the framework can be used and highlight the main proof ideas, we first consider the global setting (\cref{subsec:bc-clique-global}) before turning to the local setting (\cref{subsec:bc-clique-local}). 

\subsection{\texorpdfstring{$\bm{\NP}$}{NP}-hardness of partial covering problems}
\label{subsec:partial_covering_NP}
Orlin's $\NP$-hardness proofs of the global $\calBc$- and~$\calK$-covering problem both use their cor\-res\-pon\-ding partial variant as an intermediate step.
Indeed, it seems that showing $\NP$-hardness of the partial $\calG$-covering problem is easier.
In this section, we show that the \textsf{partial-global-} and \textsf{partial-local}-$\calK$ and~$\calBc$-\textsf{covering} problem are $\NP$-hard for the class~$\calK$ of all complete graphs and the class~$\calBc$ of all complete bipartite graphs both in the global and local setting. 

Recall that the \emph{clique-vertex-cover number} $\kappa(H)$ of a graph~$H$ is the smallest number of cliques in~$H$ that hit all vertices, i.e., $\kappa(H)$ is the minimum~$t$ such that there exists a (not necessarily edge-surjective) injective vertex-surjective graph homomorphism $\gamma\colon A_1 \cupdot \dots \cupdot A_t \to H$ with $A_i \in \calK$ for every~$i$ (called a \emph{clique-vertex-cover}).
As determining $\kappa(H)$ boils down to computing the chromatic number of its complement, deciding whether a graph~$H$ satisfies $\kappa(H) \leq k$ is $\NP$-complete.
We first show $\NP$-completeness of the global and local $\calK$-covering number.
The proof idea is similar to the $\NP$-hardness proof of the local $\calK$-covering problem provided by Poljak, Rödl and Turzík \cite[Theorem~2.1]{poljak1981complexityRepresentation}.

\begin{figure}[t]
	\centering
	\begin{subfigure}[t]{.45\linewidth}
		\centering
		\includegraphics[page=2]{figures/example_kappa.pdf}
		\caption{}
		\label{fig:proof-partial-clique-np-1}
	\end{subfigure}
	\begin{subfigure}[t]{.45\linewidth}
		\centering
		\includegraphics[page=1]{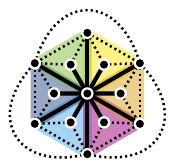}
		\caption{}
		\label{fig:proof-partial-clique-np-2}
	\end{subfigure}
	\caption{(\subref{fig:proof-partial-clique-np-1}) A clique-vertex-cover of a graph~$H$ certifying $\kappa(H) \leq 6$. (\subref{fig:proof-partial-clique-np-2}) An $S$-partial cover of~$H \join x$ where~$S$ consists of all edges incident to~$x$ (thick).}
	\label{fig:proof-partial-clique-np}
\end{figure}

\begin{lemma}
	\label{thm:partial-clique-cover-reduction}
	Let $H$ be a graph and $H' = H \join x$ be obtained by joining a universal vertex $x$ to $H$.
	Let $S = \{xv \mid v \in V(H)\}$ be the set of edges incident to the universal vertex $x$ and let $k \in \mathbb{N}$.
	The following statements are equivalent:
	\begin{enumerate}[label=\enumstyle{(\roman*)}, ref=\roman*]
		\item\label{itm:kappa} $\kappa(H) \leq k$,
		\item\label{itm:clique_local} $H'$ has a $k$-local $S$-partial $\mathcal{K}$-cover, and
		\item\label{itm:clique_global} $H'$ has a $k$-global $S$-partial $\mathcal{K}$-cover.
	\end{enumerate}
\end{lemma}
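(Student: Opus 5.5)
I will prove the cycle of implications (\ref{itm:kappa}) $\Rightarrow$ (\ref{itm:clique_global}) $\Rightarrow$ (\ref{itm:clique_local}) $\Rightarrow$ (\ref{itm:kappa}). The middle implication is immediate, since every $k$-global cover is in particular $k$-local; this holds for $S$-partial covers just as for ordinary covers.

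For (\ref{itm:kappa}) $\Rightarrow$ (\ref{itm:clique_global}), I start from a clique-vertex-cover $\gamma\colon A_1 \cupdot \dots \cupdot A_k \to H$. For each $i$, let $A_i'$ be the clique of $H'$ induced by $\gamma(V(A_i)) \cup \set{x}$. This is a clique because $x$ is universal. Every edge $xv \in S$ is covered: $v$ lies in some $\gamma(A_i)$, and then $xv \in E(A_i')$. The cliques $A_1', \dots, A_k'$ therefore form a $k$-global $S$-partial $\calK$-cover of $H'$. If $\kappa(H) < k$, the cover has fewer than $k$ guests, which is still allowed.

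The main step is (\ref{itm:clique_local}) $\Rightarrow$ (\ref{itm:kappa}). Let $\varphi\colon G_1 \cupdot \dots \cupdot G_t \to H'$ be a $k$-local $S$-partial $\calK$-cover. Let $I$ be the set of indices $i$ such that $x \in \varphi(V(G_i))$. Locality at the vertex $x$ gives $\abs{I} \leq k$; this is the key point, because only the hitcount of $x$ matters. Every edge $xv \in S$ is covered by some guest, and that guest must contain $x$, so $v \in \varphi(V(G_i))$ for some $i \in I$. For each $i \in I$, the set $\varphi(V(G_i)) - \set{x}$ induces a clique in $H$, and it is nonempty or can be discarded. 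These sets together cover all of $V(H)$. Hence $\kappa(H) \leq \abs{I} \leq k$.

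I do not expect a real obstacle. The only care needed is in the last implication: one must bound the number of guests through $x$ by the local condition at $x$, and must note that removing $x$ from a clique of $H'$ leaves a clique of $H$. Degenerate cases, such as $H$ without vertices or guests consisting only of $x$, are handled by discarding empty sets.
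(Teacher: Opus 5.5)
Your proof is correct and uses the same ingredients as the paper's: add $x$ to each clique of a clique-vertex-cover, bound the number of guests through $x$ by the locality condition at $x$, and delete $x$ to recover a clique-vertex-cover of $H$. The only difference is that you run the cycle as (i)$\Rightarrow$(iii)$\Rightarrow$(ii)$\Rightarrow$(i), merging the paper's steps (ii)$\Rightarrow$(iii) (discard guests avoiding $x$) and (iii)$\Rightarrow$(i) (restrict to $H$) into a single step.
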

\begin{proof} 
	To show that \eqref{itm:kappa} implies \eqref{itm:clique_local} consider a clique-vertex-cover~$\gamma\colon A_1 \cupdot \dots \cupdot A_k \to H$ of~$H$.
	Adding $x$ to each of the cliques~$A_i$ yields $k$~cliques $A_1', \ldots, A_k'$ which form an $S$-partial $\compl$-cover~$\gamma'$ of~$H'$, see \cref{fig:proof-partial-clique-np}.
    As~$\gamma'$ is $k$-global, it is in particular $k$-local.

    Removing all guests which do not contain the vertex~$x$ from a $k$-local $S$-partial $\calK$-cover of~$H'$ yields a $k$-global $S$-partial $\calK$-cover~$\gamma'$ as all edges of~$S$ are incident to~$x$.
    That is, \eqref{itm:clique_local} implies \eqref{itm:clique_global}.
	
	It remains to show that \eqref{itm:clique_global} implies \eqref{itm:kappa}.
    Restricting a $k$-global $S$-partial $\mathcal{K}$-cover $\varphi'\colon A_1' \cupdot \dots \cupdot A_k' \to H'$ of $H'$ to the graph~$H$ yields a clique-vertex-cover~$\varphi$ of~$H$.
    Indeed, as each vertex of~$H$ is an endpoint of some edge of~$S$, $\varphi$ hits each vertex of~$H$ and $\kappa(H) \leq k$ follows.
\end{proof}
As deciding whether a graph~$H$ satisfies $\kappa(H) \leq k$ reduces to finding a $k$-global (respectively $k$-local) $S$-partial $\calK$-cover of~$H \vee x$ where~$S$ corresponds to the edges incident to~$x$, $\NP$-complete\-ness of the partial-global and partial-local $\calK$-covering problem follow (cf. \cref{obs:NP-membership}).
\begin{corollary}
	\label{thm:partial-global-clique-cover-np}
	The \probpartglob{$\compl$} and the \probpartloc{$\compl$} are $\NP$-complete.
\end{corollary}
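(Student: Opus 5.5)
The corollary follows from \cref{thm:partial-clique-cover-reduction} by a polynomial-time reduction from the clique-vertex-cover problem. Membership in $\NP$ comes from \cref{obs:NP-membership}, since membership in $\compl$ can be checked in polynomial time. Concretely, we only need to check that a given subgraph is complete.

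For hardness, we start from the problem of deciding $\kappa(H) \leq k$. This problem is $\NP$-complete because $\kappa(H) = \chi(\overline{H})$, and deciding $\chi(G)\leq k$ is $\NP$-complete by \cite{karp1972reducibility}. Given an instance $(H,k)$, we construct $H' = H \join x$ and $S = \set{xv \given v \in V(H)}$. This takes time polynomial in $\abs{V(H)}$, as it adds one vertex and $\abs{V(H)}$ edges.

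By the equivalence \eqref{itm:kappa}$\Leftrightarrow$\eqref{itm:clique_global} of \cref{thm:partial-clique-cover-reduction}, the instance $(H,k)$ is a yes-instance exactly when $(H',S,k)$ is a yes-instance of the \probpartglob{$\compl$}. Likewise, \eqref{itm:kappa}$\Leftrightarrow$\eqref{itm:clique_local} gives the same correspondence for the \probpartloc{$\compl$}. The same map therefore reduces to both problems.

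There is no real obstacle here; the substantive work is already done in the lemma. The only points to state explicitly are that $\kappa$ is $\NP$-hard via complementation, and that $k$ may be passed through unchanged, since it is part of the input in the partial problems.
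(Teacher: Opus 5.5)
Your proposal is correct and follows the paper's argument: reduce from deciding $\kappa(H) \leq k$ (NP-hard via $\kappa(H) = \chi(\overline{H})$) by mapping $(H,k)$ to $(H \join x, S, k)$ with $S$ the edges at $x$, and apply the equivalences of \cref{thm:partial-clique-cover-reduction}. Membership in $\NP$ comes from \cref{obs:NP-membership}, as in the paper.
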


As for complete graphs, the vertex-clique-cover problem can be reduced to the \probpartglob{$\calBc$}.
\begin{lemma}[{Orlin \cite[Theorem 8.1]{orlin1977contentment}}]
	\label{thm:partial-global-biclique-cover-np}
	The \probpartglob{$\biclique$} is \nph.
\end{lemma}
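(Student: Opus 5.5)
We reduce from deciding whether $\kappa(G) \leq k$, which is $\NP$-complete as noted above (it amounts to coloring the complement of~$G$). The idea is the bipartite analogue of \cref{thm:partial-clique-cover-reduction}. We encode each vertex of~$G$ as a mandatory ``diagonal'' edge in a bipartite double cover of~$G$ with loops added. We then show that a single biclique can cover a set of diagonal edges exactly when the corresponding vertices form a clique in~$G$.

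\emph{Construction.} Given a graph~$G$ and $k \in \N$, let $V' = \set{v' \given v \in V(G)}$ be a disjoint copy of~$V(G)$. Let~$H$ be the bipartite graph on $V(G) \cup V'$ with edge set
\[
E(H) = \set{uv' \given u = v \text{ or } uv \in E(G)}.
\]
Set $S = \set{vv' \given v \in V(G)}$, so all edges $uv'$ with $u \neq v$ are optional. The instance $(H,S,k)$ has polynomial size. Membership in~$\NP$ follows from \cref{obs:NP-membership}, so only hardness remains.

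\emph{Forward direction.} Suppose $\kappa(G) \leq k$, witnessed by a clique-vertex-cover with cliques $C_1, \dots, C_k$. For each~$i$, take the subgraph of~$H$ with parts $C_i$ and $C_i' = \set{v' \given v \in C_i}$. It is complete bipartite in~$H$: for $u,v \in C_i$ we have either $u=v$, giving the diagonal edge $uv'$, or $uv \in E(G)$, giving $uv' \in E(H)$. This biclique covers $vv'$ for every $v \in C_i$. Since the~$C_i$ hit every vertex, these $k$~bicliques form a $k$-global $S$-partial $\calBc$-cover.

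\emph{Backward direction.} This is the step requiring care, but it is short. Take any complete bipartite subgraph~$B$ of~$H$. Since~$H$ is bipartite with sides $V(G)$ and~$V'$, we may write its parts as $X \subseteq V(G)$ and $Y' \subseteq V'$ whenever~$B$ has an edge; bicliques without edges can simply be discarded.

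The edges of~$S$ covered by~$B$ are exactly the edges $vv'$ with $v \in Z := X \cap Y$. For distinct $u,w \in Z$ we have $u \in X$ and $w' \in Y'$, so completeness of~$B$ gives $uw' \in E(H)$. By construction this forces $uw \in E(G)$, so~$Z$ is a clique in~$G$.

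Hence a $k$-global $S$-partial $\calBc$-cover of~$H$ yields at most~$k$ cliques $Z_1, \dots, Z_k$ of~$G$. Their union is all of $V(G)$, because every edge $vv' \in S$ is covered, so $\kappa(G) \leq k$.

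The only subtle points are allowing the parts of a biclique to be arbitrary subsets, and discarding edgeless guests. No gadget is needed here, because optional edges may be covered freely in the partial setting.
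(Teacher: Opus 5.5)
Your proof is correct and follows the route the paper indicates. The paper gives no proof of its own here; it only says that, as for complete graphs, the clique-vertex-cover problem reduces to the partial-global $\calBc$-covering problem (citing Orlin), and your reduction from $\kappa(G) \leq k$ is that reduction, using required diagonal edges $vv'$ in the bipartite graph with $uv' \in E(H)$ iff $u = v$ or $uv \in E(G)$, where a biclique with parts $X$ and $Y'$ covers exactly the diagonal edges indexed by the clique $X \cap Y$.
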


Now, we come to the \probpartloc{$\biclique$}.
This time, we do not reduce from the \textsf{clique-vertex-covering-problem}, but from the \probloc{$\bip$}.
Here, $\bip$ is the class of all bipartite graphs.

To show that the \probpartloc{$\biclique$} is $\NP$-complete, recall that the $2$-\probloc{$\bip$} is $\NP$-complete (cf. \cref{thm:local-bip-np}). 
Considering the graph~$H$ of an instance of the $2$-\probloc{$\bip$} as a subgraph of the complete graph~$K_n$ on the same number~$n$ of vertices yields an instance $(K_n,E(H),2)$ of the \probpartloc{$\biclique$}.
Clearly, every $E(H)$-partial $\calBc$-cover of~$K_n$ can be restricted to a $\calB$-cover of~$H$, and every $\calB$-cover of~$H$ can be extended (by adding missing optional edges) to an $E(H)$-partial $\calBc$-cover of~$K_n$, see \cref{fig:proof-partial-biclique-np} for an example.

\begin{figure}[t]
	\centering
	\begin{subfigure}[t]{.45\linewidth}
		\centering
		\includegraphics[page=1]{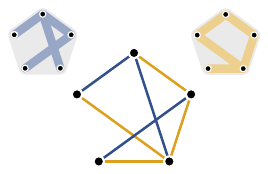}
		\caption{}
		\label{fig:proof-partial-biclique-np-1}
	\end{subfigure}
	\begin{subfigure}[t]{.45\linewidth}
		\centering
		\includegraphics[page=2]{figures/example_partial.pdf}
		\caption{}
		\label{fig:proof-partial-biclique-np-2}
	\end{subfigure}
	\caption{(\subref{fig:proof-partial-biclique-np-1}) A $2$-local $\calB$-cover of a graph~$H$ on five vertices. (\subref{fig:proof-partial-biclique-np-2}) The corresponding $E(H)$-partial $\calBc$-cover of the supergraph~$K_5$. Optional edges are represented by dotted lines.}
	\label{fig:proof-partial-biclique-np}
\end{figure}

\begin{lemma}
	\label{thm:local-partial-bc-np}
	The \probpartloc{$\biclique$} is $\NP$-complete.
\end{lemma}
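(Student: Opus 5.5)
Membership in $\NP$ follows from \cref{obs:NP-membership}, since complete bipartite graphs can be recognized in polynomial time; it therefore remains to show $\NP$-hardness. I will reduce from the $2$-\probloc{$\bip$}, which is $\NP$-complete by \cref{thm:local-bip-np}. Given an instance $H$ on $n$ vertices, I construct in polynomial time the instance $(K_n, E(H), 2)$ of the \probpartloc{$\biclique$}, where $K_n$ is the complete graph on the vertex set $V(H)$. The claim is that $H$ admits a $2$-local $\bip$-cover if and only if $K_n$ admits a $2$-local $E(H)$-partial $\biclique$-cover.

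For the forward direction, let $\varphi\colon B_1 \cupdot \dots \cupdot B_t \to H$ be a $2$-local $\bip$-cover with bipartitions $(X_i, Y_i)$ of $B_i$. Inside $K_n$ all edges between $\varphi(X_i)$ and $\varphi(Y_i)$ exist, so each $B_i$ can be replaced by the complete bipartite subgraph $B_i'$ of $K_n$ with parts $\varphi(X_i)$ and $\varphi(Y_i)$, exactly as in the proof of \cref{thm:local-compbip-kn}. Every edge of $H$ covered by $B_i$ is still covered by $B_i'$, and every vertex is hit by $B_i'$ precisely when it is hit by $B_i$. Hence all hitcounts are unchanged, and we obtain a $2$-local $E(H)$-partial $\biclique$-cover of $K_n$.

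For the backward direction, let $\psi\colon C_1 \cupdot \dots \cupdot C_t \to K_n$ be a $2$-local $E(H)$-partial $\biclique$-cover. For each $i$, let $B_i$ be the subgraph of $H$ with vertex set $\psi(V(C_i))$ and edge set $\psi(E(C_i)) \cap E(H)$. Each $B_i$ is a subgraph of the bipartite graph $\psi(C_i)$ and is therefore bipartite; isolated vertices may be kept or dropped. Since every edge of $H$ lies in $E(H)$ and is covered by some $C_i$, the graphs $B_i$ cover $E(H)$. The hitcounts do not increase, because $B_i$ hits a vertex only if $C_i$ does. So the $B_i$ form a $2$-local $\bip$-cover of $H$.

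No step here looks like a real obstacle. The only point needing care is that hitcounts are measured vertex by vertex, so both the completion and the restriction must preserve the vertex sets of the guests; the constructions above keep them identical, which is all that locality requires. Because $k=2$ is fixed in the reduction, this shows $\NP$-hardness of the $2$-\probpartloc{$\biclique$} in particular.
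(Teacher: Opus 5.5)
Your proposal is correct and follows the paper's own argument: the paper likewise reduces from the $2$-\probloc{$\bip$} via the instance $(K_n, E(H), 2)$. It completes each bipartite guest to the biclique on its bipartition and, conversely, restricts biclique guests to $E(H)$, and your version simply spells out the hitcount bookkeeping that the paper dismisses with ``clearly''.
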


\subsection{Global Covers}
\label{subsec:bc-clique-global}

\begin{figure}[t]
	\centering
	\begin{subfigure}[t]{.2\linewidth}
		\centering
		\includegraphics[page=3]{figures/examples_global_gadget.pdf}
		\caption{}
        \label{fig:global-gadget-bc-1}
	\end{subfigure}
	\begin{subfigure}[t]{.3\linewidth}
		\centering
		\includegraphics[page=2]{figures/examples_global_gadget.pdf}
		\caption{}
        \label{fig:global-gadget-bc-2}
	\end{subfigure}\hfill
	\begin{subfigure}[t]{.4\linewidth}
		\centering
		\includegraphics[page=1]{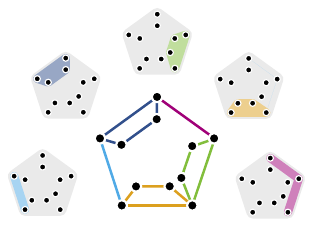}
		\caption{}
        \label{fig:global-gadget-bc-3}
	\end{subfigure}
	\caption{(\subref{fig:global-gadget-bc-1}) A global $\calBc$-gadget~$(C_4,e)$ for the class~$\calBc$ of all complete bipartite graphs. (\subref{fig:global-gadget-bc-2}) A $2$-global $S$-partial $\calBc$-cover of~$C_5$ with three optional edges (dotted). (\subref{fig:global-gadget-bc-3}) A $5$-global $\calBc$-cover of the graph~$H$ obtained from~$C_5$ by attaching the gadget~$(C_4,e)$ to each of the three optional edges.}
	\label{fig:global-gadget-bc}
\end{figure}

In this section we describe a framework for showing $\NP$-completeness of global $\calG$-covering problems.
The idea consists of reducing the partial variant to the general variant by \emph{attaching} a gadget~$(F,e)$ to each optional edge of a given partial instance.
A gadget~$(F,e)$ consists of a graph~$F$ and a (directed) edge~$e \in E(F)$.
Attaching~$(F,e)$ to a (directed) edge~$e' \in E(H')$ consists of taking the disjoint union of~$F$ and~$H'$ and identifying the edges~$e$ and~$e'$, see \cref{fig:global-gadget-bc} for an illustration.
The resulting graph is an \emph{extension} of~$(F,e)$.
As all gadgets~$(F,uv)$ we consider are symmetric with respect to~$uv$ (that is attaching~$(F,uv)$ or $(F,vu)$ to a graph~$H'$ yields the same extension), we omit the orientation of~$e$.
\begin{definition}
	\label{def:global-partial-reduction-gadget}
For a graph class~$\calG$, we call a gadget~$(F,e)$ a \emph{global $\calG$-gadget} if
\begin{myenum}{Glob-}
		\item\label{itm:part-glob-coverable} $F \in \calG$, and
		\item\label{itm:part-glob-useful} each $\calG$-cover $\varphi\colon G_1 \cupdot \dots \cupdot G_t \to H$ of every extension~$H$ of~$F$ contains a guest $G_i$ which is a subgraph of $F$.
\end{myenum}
\end{definition}

Using such gadgets, we reduce a global-partial $\calG$-covering problem to a global $\calG$-covering problem.

\begin{theorem}
	\label{thm:global-partial-reduction}
	Let $\calG$ be a hereditary graph class such that the \probpartglob{$\calG$} is \nph.
	If there is a global-partial-reduction-gadget $(F, e)$ for the class~$\calG$, then the \probglob{$\calG$} is \nph.
\end{theorem}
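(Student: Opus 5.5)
Given an instance $(H,S,k)$ of the \probpartglob{$\calG$}, I will build $H'$ by attaching a fresh copy $F_e$ of the gadget $(F,e)$ to each optional edge $e'\in E(H)-S$, identifying $e$ with $e'$. I will then set $k' = k + \abs{E(H)-S}$ and show that $H$ has a $k$-global $S$-partial $\calG$-cover if and only if $H'$ has a $k'$-global $\calG$-cover. The construction is clearly polynomial, since $F$ is fixed. Note that the gadget copies share with $H$ only the two endpoints of their attachment edge.

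\emph{Forward direction.} Take a $k$-global $S$-partial cover $G_1,\dots,G_k$ of $H$. Every optional edge $e'$, and every edge of its gadget copy $F_{e'}$, is covered by $F_{e'}$ itself, which lies in $\calG$ by \ref{itm:part-glob-coverable}. Together with the $G_i$, which are subgraphs of $H\subseteq H'$, this gives a $\calG$-cover of $H'$ with at most $k'$ guests.

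\emph{Backward direction.} Let $\varphi\colon G_1\cupdot\dots\cupdot G_t\to H'$ be a $\calG$-cover with $t\le k'$. For each optional edge $e'$, view $H'$ as an extension of $F_{e'}$. Property \ref{itm:part-glob-useful} then yields a guest whose image is a subgraph of $F_{e'}$. Distinct gadget copies overlap only in vertices of $H$, and a gadget copy has no edges inside $H$ other than $e'$. Since a guest has at least one edge (we may discard edgeless guests), a guest whose image lies in $F_{e'}$ cannot also lie in $F_{e''}$ for $e''\ne e'$. So these are $\abs{E(H)-S}$ distinct guests.

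Remove these guests and restrict the remaining at most $k$ guests to $H$, i.e.\ take $\varphi(G_i)\cap H$. Because $\calG$ is hereditary, each restriction is again in $\calG$. Every edge of $S$ lies in $H$ but in no gadget copy, so it was not covered by a removed guest. Hence it is covered by some remaining guest, and that guest's restriction to $H$ still contains it. This gives a $k$-global $S$-partial $\calG$-cover of $H$.

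\emph{Main obstacle.} The delicate point is distinctness of the gadget guests: I must make sure \ref{itm:part-glob-useful} gives a guest inside each specific copy. The definition says \emph{every} extension, and $H'$ is an extension of each copy $F_{e'}$ simultaneously, so a separate guest is obtained for each optional edge, and it lies in $F_{e'}$. The small issue that a guest consisting only of the attachment edge $e'$ lies in both $F_{e'}$ and $H$ is harmless. Such a guest lies in a unique copy, so removing it still leaves all of $S$ covered, since $e'\notin S$.
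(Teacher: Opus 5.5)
Your proposal is correct and follows the paper's proof essentially step for step. You attach a gadget copy to each optional edge, set $k' = k + \abs{E(H)} - \abs{S}$, cover each gadget copy by itself via (Glob-1), and use (Glob-2) together with heredity to remove one distinct gadget guest per optional edge and restrict the remaining guests to $H$; your handling of edgeless guests and of distinctness fills in details the paper leaves implicit.
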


Before proving above theorem, we illustrate how it can be applied to show~$\NP$-completeness for the global $\calBc$- and $\calK$-covering problem.
Both the class~$\calBc$ and the class~$\calK$ admit a global gadget.
In the case of complete bipartite graphs, this turns out to be $(C_4,ab)$ where $C_4$ is a cycle on four vertices~$a,b,c,d$ (in order), see \cref{fig:global-gadget-bc}.\footnote{Orlin uses the same gadget to reduce the global-partial $\calBc$-covering problem to the global problem \cite[Theorem 8.1]{orlin1977contentment}.}
Clearly, $C_4 \in \calBc$.
Further, no vertex of~$H-C_4$ is adjacent to~$c$ or~$d$. 
Thus, no complete bipartite graph~$G_i$ of a $\calBc$-cover~$\varphi\colon G_1 \cupdot \dots \cupdot G_t \to H$ of an extension~$H$ of~$(C_4,ab)$ covers both an edge in~$H-C_4$ and the edge~$cd$.
That is, the graph~$G_i$ that covers~$cd$ lies completely within~$C_4$ and \eqref{itm:part-glob-useful} is satisfied.
Similarly, we see that the triangle~$K_3$ is a global $\calK$-gadget with respect to the class~$\calK$ of all complete graphs.
Therefore, as both global-partial variants are $\NP$-complete (cf. \cref{thm:partial-global-clique-cover-np,thm:partial-global-biclique-cover-np}) and the global variants lie in~$\NP$ (cf. \cref{obs:NP-membership}), $\NP$-completeness follows for the global variants.

\begin{theorem}[{Orlin \cite[Theorem 8.1]{orlin1977contentment}}]
	\label{thm:global-clique-bc-np}
	The \textsf{global}-$\calK$- and $\calBc$-\textsf{covering} problem are $\NP$-complete for the class~$\calK$ of all complete graphs and the class~$\calBc$ of all complete bipartite graphs.
\end{theorem}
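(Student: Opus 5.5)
The plan is to combine \cref{thm:global-partial-reduction} with the two partial results \cref{thm:partial-global-clique-cover-np} and \cref{thm:partial-global-biclique-cover-np}, and to supply a global gadget for each of $\calK$ and $\calBc$. Membership in $\NP$ follows from \cref{obs:NP-membership}, since membership in $\calK$ or in $\calBc$ can be tested in polynomial time. It therefore remains to establish $\NP$-hardness. Both classes are hereditary in the sense needed for the reduction, because restricting a clique or a biclique to the subgraph it induces on a vertex subset gives again a clique, respectively a biclique. I would state explicitly that this closure under induced subgraphs is the form of heredity used in \cref{thm:global-partial-reduction}.

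For $\calBc$ I take the gadget $(C_4,ab)$ on the cycle $a,b,c,d$. Property \eqref{itm:part-glob-coverable} holds since $C_4\cong K_{2,2}$. For \eqref{itm:part-glob-useful}, let $H$ be an extension and let $G_i$ be a biclique of a cover that covers $cd$. I claim $\varphi(G_i)\subseteq C_4$. Since $c,d$ are adjacent in $G_i$, they lie in opposite parts of $G_i$. Every vertex in the part of $c$ must be adjacent to $d$ in $H$, so it lies in $N_H(d)=\{a,c\}$. Symmetrically, the part of $d$ lies in $N_H(c)=\{b,d\}$. Hence $G_i$ lives on $\{a,b,c,d\}$ and is a subgraph of $C_4$.

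For $\calK$ I take $(K_3,ab)$ with apex $c$. Any clique covering $ac$ (or $bc$) can only contain common neighbours, and $N_H(c)=\{a,b\}$. So that clique lies within the triangle, which establishes \eqref{itm:part-glob-useful}. Property \eqref{itm:part-glob-coverable} is immediate since $K_3\in\calK$.

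Applying \cref{thm:global-partial-reduction} with these gadgets then yields $\NP$-hardness, and together with membership, $\NP$-completeness of both global problems. The main point of care lies inside \cref{thm:global-partial-reduction}, namely that attaching gadgets to the optional edges preserves yes/no answers with $k'=k+\#\text{optional edges}$. Since that theorem is given, the only genuine checks here are the neighbourhood arguments above. Those arguments require that the gadget's extra vertices are adjacent to nothing outside the gadget, which holds by the definition of attaching.
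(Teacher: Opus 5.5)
Your proposal is correct and follows the paper's argument essentially verbatim. The paper likewise obtains $\NP$-membership from \cref{obs:NP-membership} and hardness of the partial variants from \cref{thm:partial-global-clique-cover-np,thm:partial-global-biclique-cover-np}. It verifies the gadgets $(C_4,ab)$ and $K_3$ using the fact that the new gadget vertices have no neighbours outside the gadget, and then applies \cref{thm:global-partial-reduction}. Your explicit part-containment check ($X\subseteq N(d)$, $Y\subseteq N(c)$) and your remark that only closure under induced subgraphs is needed (which $\calBc$ has, unlike monotonicity) are slightly more careful than the paper's wording, but not a different route.
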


It remains to prove~\cref{thm:global-partial-reduction}.
We show $\NP$-completeness of the global variant via a reduction from the partial-global variant.
\begin{proof}[Proof of \cref{thm:global-partial-reduction}]
Given an instance~$(H,S,k)$ of the \probpartglob{$\calG$}, consider the graph~$H'$ obtained from~$H$ by attaching a global $\calG$-gadget to each optional edge~$e \in E(H)-S$ of~$H$.
It now suffices to prove that~$H'$ admits a $k'$-global $\calG$-cover with $k' = k+\abs{E(H)}-\abs{S}$ if and only if~$H$ has a $k$-global $S$-partial $\calG$-cover, see \cref{fig:global-gadget-bc} for an illustration.
In every $t$-global $\calG$-cover~$\varphi\colon G_1 \cupdot \dots \cupdot G_{t} \to H'$ of~$H'$, each gadget~$G$ attached to an optional edge is partially covered with a distinct graph~$G_i$ that covers no edge of~$H'-E(G)$ by \eqref{itm:part-glob-useful}.
Thus, removing these cover graphs~$G_i$ and restricting the resulting partial cover to~$H$ yields a $(t-\abs{E(H)]}+\abs{S})$-global $S$-partial $\calG$-cover since~$\calG$ is hereditary.
Further, every $k$-global $S$-partial $\calG$-cover can be extended to a $k'$-global $\calG$-cover of~$H'$ by covering each attached gadget separately due to~\eqref{itm:part-glob-coverable}.
\end{proof}

\subsection{Local Covers}
\label{subsec:bc-clique-local}
Within this section, we adapt the framework for global $\calG$-covering problems developed in \cref{subsec:bc-clique-global} to the local setting.
In the global setting, attaching a gadget to on optional edge~$e$ is similar to turning this edge into a non-optional edge while increasing the number of guests in an~$(S \cup \set{e})$-partial $\calG$-cover of minimum size by one.
In the local setting we aim to increase the hitcount of the endpoints of~$e$ by~$1$ instead.
\begin{definition}
	\label{def:local-partial-reduction-gadget}
	For a graph class~$\calG$ and~$k \in \N$, we call a gadget~$(F,e)$ a \emph{$k$-local $\calG$-gadget} if 
	\begin{myenum}{Loc-}
		\item \label{itm:local-partial-reduction-gadget-coverable} There is a $k$-local $\calG$-cover of $F$ which hits the endpoints of $e$ only once, and
		\item \label{itm:local-partial-reduction-gadget-subgraph} each $k$-local $\calG$-cover $\varphi\colon G_1 \cupdot \dots \cupdot G_t \to H$ of every extension of~$F$ contains a guest $G_i \subseteq F$ covering~$e$.
	\end{myenum}
\end{definition}
Yet, in contrast to the global setting, the gadgets we use to reduce an instance $(H,S,k)$ of the \probpartloc{$\calG$} to the \probloc{$\calG$} depend upon~$k$.
To illustrate the definition, we first construct gadgets for the class~$\calK$ of all complete graphs, see \cref{fig:partial-local-K-gadget} for an illustration.
\begin{figure}
    \centering
    \begin{subfigure}[t]{0.25\textwidth}
        \centering
        \includegraphics[page=1]{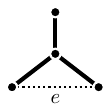}
        \caption{}
        \label{fig:partial-local-K-gadget-1}
    \end{subfigure}
    \begin{subfigure}[t]{0.3\textwidth}
        \centering
        \includegraphics[page=2]{figures/example_partial_gadget.pdf}
        \caption{}
        \label{fig:partial-local-K-gadget-2}
    \end{subfigure}
    \begin{subfigure}[t]{0.3\textwidth}
        \centering
        \includegraphics[page=3]{figures/example_partial_gadget.pdf}
        \caption{}
        \label{fig:partial-local-K-gadget-3}
    \end{subfigure}
    \caption{(\subref{fig:partial-local-K-gadget-1}) A $2$-local $\calK$-gadget~$(S_2,e)$ for the class~$\calK$ of all complete graphs. (\subref{fig:partial-local-K-gadget-2}) A $2$-local $S$-partial $\calK$-covering of a graph~$H$. Optional edges are represented by dotted lines. (\subref{fig:partial-local-K-gadget-3}) Attaching to each optional edge of~$H$ the gadget represented in (\subref{fig:partial-local-K-gadget-1}) yields a graph~$\hat{H}$ with a $3$-local $\calK$-covering. Note that the hitcount of each vertex incident to an optional edge of~$H$ increased by~$1$.}
    \label{fig:partial-local-K-gadget}
\end{figure}
\begin{lemma}
	\label{thm:gadget-clique-local-partial-reduction}
    For~$k \in \N$ the pair~$(S_k,e)$ where~$S_k$ is the graph obtained by joining two leaves of the star~$K_{1,k+1}$ with an edge~$e$ is a $k$-local $\calK$-gadget for the class~$\calK$ of all complete graphs.
\end{lemma}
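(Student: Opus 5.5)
We check the two conditions of the definition of a $k$-local $\calK$-gadget directly; both follow from counting the guests at the centre of the star. We fix notation first. Let $c$ be the centre of $K_{1,k+1}$ and let $u,v,\ell_1,\dots,\ell_{k-1}$ be its leaves, with $e=uv$. Then $S_k$ consists of the triangle $cuv$ together with $k-1$ pendant edges $c\ell_i$.

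In any extension $H$, only $u$ and $v$ are identified with host vertices. Hence $N_H(c)=\set{u,v,\ell_1,\dots,\ell_{k-1}}$ and $N_H(\ell_i)=\set{c}$ for every $i$.

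\textbf{Property \eqref{itm:local-partial-reduction-gadget-coverable}.} We cover $S_k$ by the triangle on $\set{c,u,v}$ and by the $k-1$ edges $c\ell_i$, each used as a $K_2$. Then $c$ has hitcount $1+(k-1)=k$, and every other vertex, in particular $u$ and $v$, is hit exactly once. So this is a $k$-local $\calK$-cover of $S_k$ that hits the endpoints of $e$ only once.

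\textbf{Property \eqref{itm:local-partial-reduction-gadget-subgraph}.} Fix a $k$-local $\calK$-cover $\varphi$ of an extension $H$.

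1. Each pendant edge $c\ell_i$ must be covered by some guest. Since $\ell_i$ has degree $1$ in $H$, any clique containing both $c$ and $\ell_i$ is exactly the edge $c\ell_i$.
2. Hence there are $k-1$ distinct guests at $c$ that are single edges $c\ell_i$. None of them contains $u$ or $v$.
3. Since $\hit{\varphi}{c}\le k$, at most one further guest $G$ contains $c$. Both edges $cu$ and $cv$ must be covered, and they can only be covered by guests containing $c$ other than those in step 2. So $G$ covers both $cu$ and $cv$.
4. Being a clique, $G$ therefore contains the edge $uv=e$. Its vertex set lies in $N_H[c]\subseteq V(S_k)$. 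It contains no $\ell_i$, since $\ell_i$ is not adjacent to $u$. So $G$ is exactly the triangle $cuv$, which is a subgraph of $S_k$ covering $e$.

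The only point needing care is step 4, namely that $G$ is a subgraph of $F=S_k$. This holds because the newly added vertex $c$ has its entire neighbourhood inside the gadget, so any clique through $c$ stays inside the gadget.
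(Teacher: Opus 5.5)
Your proof is correct and follows the paper's argument. For \eqref{itm:local-partial-reduction-gadget-coverable} you use the same cover (the triangle plus single edges). For \eqref{itm:local-partial-reduction-gadget-subgraph} you use the same counting at the centre $c$ (degree $k+1$ but hitcount at most $k$), together with the fact that the triangle on $c$ and the endpoints of $e$ is the only clique with at least three vertices containing $c$. The paper applies pigeonhole directly to the $k+1$ edges at $c$ rather than first accounting for the pendant edges, but that difference is only cosmetic.
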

\begin{proof}
    Clearly, covering the triangle induced by the center~$c$ of~$K_{1,k+1}$ and the endpoints of~$e$ and covering all other edges separately with a~$K_2$ yields a $\calK$-cover of~$S_k$ certifying \eqref{itm:local-partial-reduction-gadget-coverable}.

    To show that~$(S_k,e)$ also satisfies \eqref{itm:local-partial-reduction-gadget-subgraph} consider a $k$-local $\calK$-cover~$\varphi$ of an extension~$H$ of~$S_k$. 
    Note that the triangle~$T$ formed by the center~$c$ and the endpoints of~$e$ is the only clique of size at least~$3$ in~$H$ containing $c$. 
    While~$c$ has degree~$k+1$, it is only hit at most~$k$ times. 
    Thus, $T$ is part of the cover~$\varphi$ and \eqref{itm:local-partial-reduction-gadget-subgraph} follows.
\end{proof}
For the class $\biclique$ of complete bipartite graphs the construction of a $k$-local $\calBc$-gadget turns out to be more complicated.
In fact, the construction presented in \cref{subsec:local_bc_covers} is algebraic and only works for certain $k$.
Thus, we shall not require the existence of a $k$-local $\calG$-gadget for every $k$ in order to reduce the \probpartloc{$\calG$} to the \probloc{$\calG$}.
We only need to ensure that a $k$-local $\calG$-gadget exists for sufficiently many $k$. 
\begin{definition}
	Let $\calG$ be a graph class.
	For a set $K \subseteq \mathbb{N}$, we call $\set{(F_k, e_k) \given k \in K}$ \emph{$K$-local $\calG$-gadgets} if the following properties are satisfied for some~$\beta \in \mathbb{N}$:
	\begin{myenum}{LF-}
		\item\label{itm:partial-local-family-gadget} $(F_k, e_k)$ is a $k$-local $\calG$-gadget for every $k \in K$.
		\item\label{itm:partial-local-family-poly-size} Given $k \in K$, it is possible to construct $(F_k, e_k)$ in $\mathcal{O}(k^\beta)$ time.
		\item\label{itm:partial-local-family-poly-k} Given $\ell \in \mathbb{N}$, it is possible to find a $k \geq \ell$ in $\mathcal{O}(\ell^\beta)$ time which satisfies $k \in K$ and $k \in \mathcal{O}(\ell^\beta)$.
	\end{myenum}
\end{definition}

\begin{theorem}
	\label{thm:local-partial-reduction}
	Let $\calG$ be a hereditary guest class such that the \probpartloc{$\calG$} is \nph.
	If there are $K$-local $\calG$-gadgets~$\set{(F_k, e_k) \given k \in K}$ for some $K \subseteq N$, then the \probloc{$\calG$} is \nph.
\end{theorem}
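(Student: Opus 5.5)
Given an instance $(H,S,\ell)$ of the \probpartloc{$\calG$}, I would reduce it to an instance $(\hat H,k)$ of the \probloc{$\calG$}, mirroring the proof of \cref{thm:global-partial-reduction}. The obstacle compared to the global setting is that the gadgets only raise the hitcount at the endpoints of optional edges. Different vertices of $H$ are therefore affected differently, while the threshold $k$ must be uniform. I would repair this by padding. Using \eqref{itm:partial-local-family-poly-k}, pick $k \in K$ with $k \geq \ell + \Delta$, where $\Delta = \abs{V(H)}$ (any bound on the number of optional edges at a vertex suffices). By \eqref{itm:partial-local-family-poly-size}, $(F_k,e_k)$ can be built in time polynomial in $\abs{H}$.

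\textbf{Construction.} Attach a copy of $(F_k,e_k)$ to every optional edge $e\in E(H)-S$, and write $d(v)$ for the number of optional edges at $v$. Each vertex $v$ would then carry $\ell + d(v)$ hits in the intended cover, and we need to raise this to exactly $k$. To do so, attach $k-\ell-d(v)$ further copies of $(F_k,e_k)$ at $v$: for each copy, add a new vertex $u$ and a pendant edge $vu$, then attach a gadget to $vu$. Each such copy forces one guest at $v$ lying inside the gadget. The resulting graph $\hat H$ has polynomial size, since $k$ is polynomial in $\abs{H}$ and each gadget has size $\mathcal{O}(k^\beta)$. The question is whether $\hat H$ has a $k$-local $\calG$-cover. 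The copies of $F_k$ are pairwise internally disjoint and meet $H$ or the pendant vertices only in the endpoints of $e_k$, so $\hat H$ is an extension of each copy.

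\textbf{Correctness, forward direction.} Take an $\ell$-local $S$-partial $\calG$-cover of $H$. Every guest is a subgraph of $H$, so since $\calG$ is hereditary we may restrict each guest to the edges of $S$ (or simply keep it as is). Then cover every gadget copy by the cover from \eqref{itm:local-partial-reduction-gadget-coverable}, which hits the endpoints of $e_k$ only once. Each original vertex $v$ is then hit at most $\ell + d(v) + (k-\ell-d(v)) = k$ times. Pendant vertices and interior gadget vertices are hit at most $k$ times, because the gadget cover is $k$-local and a pendant vertex lies in a single gadget. All edges are covered: optional edges are covered inside their gadgets, and the edges $vu$ are copies of $e_k$.

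\textbf{Correctness, backward direction (main step).} Take a $k$-local $\calG$-cover $\varphi$ of $\hat H$. By \eqref{itm:local-partial-reduction-gadget-subgraph}, each gadget copy contains a guest $G_i \subseteq F_k$ covering its $e_k$. These guests are pairwise distinct because distinct copies share no edges, and each hits both endpoints of its $e_k$. So at each $v\in V(H)$ at least $d(v) + (k-\ell-d(v)) = k-\ell$ hits come from guests lying entirely inside gadgets. Discard all guests that lie inside some gadget copy, and restrict the remaining guests to $H$; this is allowed because $\calG$ is hereditary. The result hits each $v$ at most $\ell$ times. It still covers every edge of $S$: an edge of $S$ is not contained in any gadget copy, so the guest covering it was not discarded.

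The subtle point is that a guest inside a gadget might also cover edges of $H$. This cannot happen, since $G_i \subseteq F_k$ and the only edge of $F_k$ in $H$ is the optional edge $e_k$. I expect the main care to be needed in checking that the extensions used are valid for \eqref{itm:local-partial-reduction-gadget-subgraph} even with many gadgets attached at the same vertex. The definition quantifies over every extension of $F$, which should cover this, but it has to be verified that the attached gadgets are all internally disjoint.
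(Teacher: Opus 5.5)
Your proof is correct and follows the paper's argument: you pick a gadget index in $K$ at least $\ell+\abs{V(H)}$ via \eqref{itm:partial-local-family-poly-k}, and pad every vertex with pendant optional edges so that it meets exactly $k-\ell$ optional edges (in your naming). You then attach a gadget to every optional edge, extend via \eqref{itm:local-partial-reduction-gadget-coverable} in one direction, and delete the guests guaranteed by \eqref{itm:local-partial-reduction-gadget-subgraph} and restrict to $H$ in the other. One small caveat: heredity does not license restricting guests to the edges of $S$ (for instance, $\calBc$ is not closed under edge deletion), so in the forward direction you must keep the guests as they are, which your parenthetical already permits.
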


Before proving above theorem, note that \cref{thm:gadget-clique-local-partial-reduction} and \cref{obs:NP-membership} now yield an alternative proof of $\NP$-completeness for the \probloc{$\calK$}.
\begin{corollary}
	\label{thm:local-clique-np}
	The \probloc{$\compl$} is $\NP$-complete.
\end{corollary}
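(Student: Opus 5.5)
The statement is Corollary~\ref{thm:local-clique-np}, so it follows by plugging earlier results into Theorem~\ref{thm:local-partial-reduction}. Membership in $\NP$ is given by Observation~\ref{obs:NP-membership}, since recognizing whether a graph is complete takes polynomial time. For $\NP$-hardness, I apply Theorem~\ref{thm:local-partial-reduction} with $\calG = \compl$ and $K = \N$. This needs three inputs: that $\compl$ is hereditary, that the \probpartloc{$\compl$} is $\NP$-hard, and that $\N$-local $\compl$-gadgets exist.

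The class $\compl$ is hereditary in the sense used in the reduction. Every subgraph of a complete graph that we keep after restricting a cover to $H$ can be replaced by complete induced subgraphs, and restricting the vertex set of a clique again gives a clique. If the proof of Theorem~\ref{thm:local-partial-reduction} needs closure under arbitrary edge deletion, then strictly speaking cliques are only closed under vertex deletion. In that case I would note that restricting a guest $K_m \subseteq H'$ to $V(H)$ yields a clique of $H$, which is all that is needed. $\NP$-hardness of the \probpartloc{$\compl$} is exactly Corollary~\ref{thm:partial-global-clique-cover-np}.

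For the gadget family I take $(F_k,e_k) = (S_k,e)$ from Lemma~\ref{thm:gadget-clique-local-partial-reduction}, which is a $k$-local $\compl$-gadget for every $k \in \N$. This gives \eqref{itm:partial-local-family-gadget}. The graph $S_k$ has $k+2$ vertices and $k+2$ edges, so it can be built in $\mathcal{O}(k)$ time, which gives \eqref{itm:partial-local-family-poly-size} with $\beta = 1$. Since $K = \N$, given $\ell$ we simply return $k = \ell$, which gives \eqref{itm:partial-local-family-poly-k}.

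Theorem~\ref{thm:local-partial-reduction} then yields $\NP$-hardness, and together with $\NP$-membership this gives $\NP$-completeness. The only point needing care is the hereditary requirement discussed above; everything else is immediate.
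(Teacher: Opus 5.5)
Your proposal is correct and follows the paper's argument. The paper obtains the corollary by applying Theorem~\ref{thm:local-partial-reduction} with the gadgets $(S_k,e)$ of Lemma~\ref{thm:gadget-clique-local-partial-reduction} for every $k$, the $\NP$-hardness of the partial-local problem from Corollary~\ref{thm:partial-global-clique-cover-np}, and Observation~\ref{obs:NP-membership}; your explicit checks of \eqref{itm:partial-local-family-gadget}--\eqref{itm:partial-local-family-poly-k} and your remark that closure of $\compl$ under vertex deletion suffices (as $H$ is induced in the constructed graph $L$) fill in details the paper leaves implicit.
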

However, the result presented in \cite[Theorem~2.1]{poljak1981complexityRepresentation}\footnote{The proof given by Poljak, Rödl and Turzík is very short (it requires half a page). It reduces the determination of the chromatic number to the \probloc{$\calK$}: Given a graph~$G$ we construct a graph~$H$ that it obtained from the complement of~$G$ by adding~$\abs{V(G)}$ new independent vertices~$y_1, \dots, y_n$, and adding a universal vertex~$x$ connected to all of~$V(G)$ and each vertex~$y_i$. It then suffices to observe that $\cl{\calK}(H) = \chi(G) + n$.} provides a generalization as Poljak, Rödl and Turzík show that the $k$-\probloc{$\calK$} is $\NP$-complete for \emph{every fixed}~$k \geq 4$ while it admits a polynomial time solution for $k = 2$.

The proof idea of~\cref{thm:local-partial-reduction} is very similar to the global version (cf. \cref{thm:global-partial-reduction}). 
Our aim is to transform an instance~$(H,S,k)$ of the \probpartloc{$\calG$} into an instance~$(L,\ell)$ of the \probloc{$\calG$} by attaching gadgets to optional edges.
Local gadgets increase the hitcount of the edge they are attached to by one.
Thus, as we want to increase the hitcount of every edge by exactly the same amount, we first add optional edges to ensure that each vertex of~$H$ is incident to the same number of optional edges.
The instance~$(L,\ell)$ then arises by attaching a gadget to each optional edge.

\begin{proof}[Proof of~\cref{thm:local-partial-reduction}]
    Given an instance~$(H,S,k)$ of the \probpartloc{$\calG$}, we construct an instance~$(L,\ell)$ of the \probloc{$\calG$} such that $H$ admits a $k$-local $S$-partial $\calG$-cover if and only if~$L$ has an $\ell$-local $\calG$-cover.
    $\NP$-hardness of the \probloc{$\calG$} then follows as the partial variant is $\NP$-hard by assumption.

    \proofsubparagraph*{Construction.} By \eqref{itm:partial-local-family-poly-k}, we can compute in polynomial time an integer~$\ell \geq k + \abs{V(H)}$ with $\ell \in K$ and whose size is polynomial in $k+\abs{V(H)}$.
    Note that each vertex of~$H$ is incident to at most~$\abs{V(H)} \leq \ell - k$ optional edges.
    We now turn~$H$ into an instance~$(H',S,\ell)$ of the \probpartloc{$\calG$} with $H \subseteq H'$ where each vertex of~$H$ is incident to exactly $\ell- k$ of optional edges.
    This can be achieved by adding up to~$\ell-k$ leaf vertices for each vertex~$v \in V(H)$ and joining them with an optional edge to~$v$.  
    We now obtain the graph~$L$ by attaching a copy of the gadget~$(F_{\ell},e_{\ell})$ to each optional edge of~$H'$, that is to all optional~$E(H)-S$ and the newly added edges~$E(H')-E(H)$.
    Note that the construction can be performed in polynomial time by \eqref{itm:partial-local-family-poly-size}.

    \proofsubparagraph*{Correctness.} It remains to prove that~$H$ admits a $k$-local $S$-partial $\calG$-cover if and only if~$L$ has an $\ell$-local $\calG$-cover.
    First assume that $H$ admits a $k$-local $S$-partial $\calG$-cover~$\gamma$. 
    By \eqref{itm:local-partial-reduction-gadget-coverable}, the gadget~$(F_{\ell},e_{\ell})$ has an $\ell$-local $\calG$-cover which hits the endpoints of~$e_{\ell}$ only once.
    Recall that each vertex~$v \in V(H)$ is incident to exactly $\ell-k$~optional edges.
    Thus, covering each copy of the gadget separately, we can extend~$\gamma$ to an $\ell$-local $\calG$-cover of~$L$.

    Now suppose~$L$ has an~$\ell$-local $\calG$-cover~$\varphi\colon G_1 \cupdot \dots \cupdot G_t \to L$.
    By \eqref{itm:local-partial-reduction-gadget-subgraph} there is for every optional edge~$e \in E(H')$ a guest~$G_i$ of~$\varphi$ covering~$e$ which lies within the copy~$F_e$ of the gadget attached to~$e$. 
    Removing all these graphs from~$\varphi$ and restricting the remaining guests to~$H$ yields an $S$-partial $\calG$-cover~$\gamma$ of of~$H$ as~$\calG$ is hereditary.
    As each vertex of~$H$ is incident to exactly $\ell-k$~optional edges in~$H'$, we reduce the hitcount of vertices in~$H$ by~$\ell-k$. 
    Thus, $\gamma$ is $k$-local.
\end{proof}

\subsubsection{Local Covers with Complete Bipartite Graphs}
\label{subsec:local_bc_covers}

The goal of this section is to construct $K$-local $\calBc$-gadgets for the class $\biclique$ of all complete bipartite graphs.
With \cref{thm:local-partial-reduction} and \cref{thm:local-partial-bc-np} $\NP$-completeness of the \probloc{$\calBc$} then follows.
\localBicliqueNP*

The construction of such a family~$\calF$ of gadgets relies in particular on increasing the hitcount of vertices by one.
This can be achieved with \emph{star-blowups}.
The \emph{$s$-star-blowup}~$S$ of a graph~$H$ is the graph consisting of the disjoint union of $s$~copies~$H_1, \dots, H_s$ of~$H$ and a universal vertex~$x$ joined with edges to all vertices of the graphs~$H_i$, see \cref{fig:star-blowup} for an example. 
We call~$x$ the \emph{center} of~$S$ and the graphs~$H_i$ its \emph{leaf-blocks}.
\begin{figure}
    \centering
        \centering
        \includegraphics[page=1]{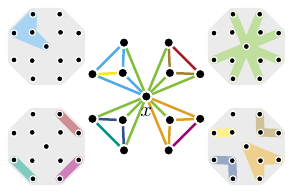}
        \caption{A $3$-local $\calBc$-cover of the $4$-star-blowup of~$K_3$ with center~$x$. Note that there is a leaf-block that is edge-disjoint from all guests hitting~$x$.}
        \label{fig:star-blowup}
\end{figure}

\begin{lemma}
	\label{lem:star-blowup-pigeon}
    Let $S$ be an $s$-star-blowup with center~$x$ of a graph~$H$ and assume~$S$ is an induced subgraph of a graph~$F$.
    In every $k$-local $\biclique$-cover~$\varphi\colon B_1 \cupdot \dots \cupdot B_t \to F$ of $F$ there are at least $s-k$ leaf-blocks of $S$ which are edge-disjoint from the guests~$G_i$ hitting $x$.
\end{lemma}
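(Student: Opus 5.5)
Since $\varphi$ is $k$-local, the center $x$ is hit by at most $k$ guests, say $B_{i_1},\dots,B_{i_m}$ with $m\le k$. The plan is to show that each such guest can share edges with at most one leaf-block of $S$. Then at most $m\le k$ leaf-blocks meet guests hitting $x$ in an edge, and at least $s-k$ leaf-blocks are edge-disjoint from all of them.

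Fix a guest $B=B_{i_j}$ hitting $x$, and suppose for contradiction that $B$ covers an edge $u_1v_1$ of leaf-block $H_a$ and an edge $u_2v_2$ of leaf-block $H_b$ with $a\neq b$. Let $X,Y$ be the parts of $B$; complete bipartiteness means every vertex of $X$ is adjacent in $F$ to every vertex of $Y$. Since $S$ is an induced subgraph of $F$, there are no edges in $F$ between distinct leaf-blocks $H_a$ and $H_b$.

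Without loss of generality $u_1,u_2\in X$ and $v_1,v_2\in Y$, where we may swap the names within each covered edge. The two covered edges have endpoints on both sides, so $B$ contains the edge $u_1v_2$. This is an edge of $F$ between $H_a$ and $H_b$, contradicting inducedness. Hence each guest covers edges of at most one leaf-block.

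I used only that $B$ is complete bipartite and not that $B$ contains $x$. So the argument gives the counting bound for the guests hitting $x$ directly, and the pigeonhole count completes the proof.

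The only subtle point is the orientation step: which endpoint of each covered edge lies in $X$. It is resolved by relabeling within each edge, since every edge of a bipartite graph has exactly one endpoint in each part. No other obstacle is expected.
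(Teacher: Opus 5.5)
Your proof is correct and follows the paper's argument. Because distinct leaf-blocks are non-adjacent in $F$ (since $S$ is induced), each complete bipartite guest covers edges of at most one leaf-block, and since $x$ is hit at most $k$ times, at most $k$ leaf-blocks share an edge with a guest hitting $x$; you also spell out the step the paper leaves implicit, namely that covering $u_1v_1\in E(H_a)$ and $u_2v_2\in E(H_b)$ would force the cross edge $u_1v_2$ into $F$.
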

\begin{proof}
    As there are no edges joining two distinct leaf-blocks of~$S$, each complete bipartite graph~$B_i$ of a $k$-local cover~$\varphi\colon B_1 \cupdot \dots B_t \to F$ of~$F$ only covers edges of at most one leaf-block.
    Yet, there are $s$ leaf-blocks in total and the vertex~$x$ is hit at most $k$~times.
	Thus, there are at least $s-k$ leaf-blocks whose edges are not covered by any guest hitting~$x$.
\end{proof}

\begin{lemma}
	\label{thm:bc-gad1}
    For the $(k+1)$-star-blowup~$S$ of a graph~$H$ with $\cl{\biclique}(H) = k$, we have $\cl{\biclique}(S) = k+1$.
\end{lemma}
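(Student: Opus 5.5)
We prove the two inequalities $\cl{\biclique}(S) \leq k+1$ and $\cl{\biclique}(S) \geq k+1$ separately. Throughout, $x$ denotes the center of $S$ and $H_1, \dots, H_{k+1}$ its leaf-blocks.

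\emph{Upper bound.} Take a $k$-local $\biclique$-cover of each leaf-block $H_i$; such a cover exists since $\cl{\biclique}(H) = k$. It remains to cover the edges incident to $x$. We add the single star $K_{1,\abs{V(S)}-1}$ with center $x$ and all vertices of the leaf-blocks as leaves. This star is a complete bipartite graph contained in $S$. It hits $x$ exactly once and hits every other vertex exactly once. Hence every vertex of a leaf-block has hitcount at most $k+1$, and $x$ has hitcount $1 \leq k+1$. This gives $\cl{\biclique}(S) \leq k+1$.

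\emph{Lower bound.} Suppose for contradiction that $S$ has a $k$-local $\biclique$-cover $\varphi$. We apply \cref{lem:star-blowup-pigeon} with $F = S$ and $s = k+1$. It yields at least $s - k = 1$ leaf-block $H_j$ that is edge-disjoint from all guests hitting $x$. Let $v \in V(H_j)$ be a vertex with at least one incident edge in $H_j$.

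\begin{itemize}
\item The edge $xv$ is covered by some guest $B$. This guest hits $x$, so by the choice of $H_j$ it covers no edge of $H_j$.
\item The guests covering the edges of $H_j$ therefore do not hit $x$. Restricted to $H_j$, they form a $\biclique$-cover of $H_j$. Since $\biclique$ is hereditary under passing to induced sub-bicliques, the restriction of a biclique to $V(H_j)$ is again a biclique.
\item Because $\cl{\biclique}(H_j) = k$, some vertex $v$ of $H_j$ must be hit at least $k$ times by these guests.
\item That $v$ is additionally hit by the guest $B$ covering $xv$. The guest $B$ is distinct from the others, since they do not hit $x$.
\end{itemize}

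So $v$ has hitcount at least $k+1$, contradicting $k$-locality.

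\emph{Main obstacle.} The delicate point is the restriction step: guests covering edges of $H_j$ might also contain $x$. This is exactly excluded by \cref{lem:star-blowup-pigeon}, which is why the lemma is needed. We also need the maximal-hitcount vertex of $H_j$ to be adjacent to $x$, which holds since $x$ is universal. If $H$ has no edges, then $k = 0$, and the star alone shows the value is exactly $1$.
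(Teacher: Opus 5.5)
Your proof is correct and follows essentially the same route as the paper. The upper bound uses a single star centered at $x$. The lower bound applies \cref{lem:star-blowup-pigeon} to find a leaf-block untouched by the guests hitting $x$, so each of its vertices needs an extra guest for its edge to $x$; your count (some vertex hit $k$ times inside $H_j$, plus once for its edge to $x$) is just the contrapositive of the paper's remark that restricting to that leaf-block would give a $(k-1)$-local cover.
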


\begin{proof}
    Covering all edges incident to the center~$x$ of~$S$ with a single star, we can extend a $k$-local $\calBc$-cover of the leaf-blocks (each of which is a copy of~$H$) to a $(k+1)$-local $\calBc$-cover of~$S$, in particular we have $\cl{\biclique}(S) \leq k+1$.
	
	To obtain the lower bound, suppose there is a $k$-local $\calBc$-cover~$\varphi\colon B_1 \cupdot \dots \cupdot B_t$ of~$S$. 
    By \cref{lem:star-blowup-pigeon}, there is a leaf-block $\hat{H}$ of $H'$ which is edge-disjoint from all guests~$B_i$ hitting $x$.
    Yet, the edge~$vx$ of each vertex~$v \in V(\hat{H})$ is covered by a guest~$B_i$ which hits both~$x$ and~$v$.
    Thus, restricting~$\varphi$ to the leaf-block~$\hat{H}$ yields a $(k-1)$-local $\calBc$-cover of~$\hat{H}$, a contradiction to $\cl{\biclique}(H) = k$.
\end{proof}
\begin{corollary}
\label{cor:k-local-Bc-hard-then-k+1}
    If the $k$-\probloc{$\calBc$} is $\NP$-hard for some~$k \in \N$, then it is $\NP$-hard for all $k' \geq k$.
\end{corollary}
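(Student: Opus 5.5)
The plan is to give a polynomial-time reduction from the $k$-\probloc{$\calBc$} to the $(k+1)$-\probloc{$\calBc$}; iterating it $k'-k$ times (a constant number of steps, since $k$ and $k'$ are fixed) yields the corollary. Given an instance $H$ of the $k$-\probloc{$\calBc$}, I would output the $(k+1)$-star-blowup $S$ of $H$. It consists of $k+1$ copies of $H$ plus one center vertex, so it has size polynomial in $|V(H)|$ for fixed $k$. It remains to show that $\cl{\biclique}(H) \leq k$ if and only if $\cl{\biclique}(S) \leq k+1$.

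For the forward direction I would reuse the upper-bound argument from the proof of \cref{thm:bc-gad1}. Take a $k$-local $\calBc$-cover of each leaf-block, which is a copy of $H$. Add one star $K_{1,(k+1)|V(H)|}$ centred at $x$ covering all edges incident to $x$. Every vertex of a leaf-block is hit at most $k+1$ times, and $x$ is hit once.

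For the backward direction I would adapt the lower-bound argument of \cref{thm:bc-gad1}. The key point is that this argument never used the exact value $\cl{\biclique}(H) = k$, only an upper bound on the hitcount.
\begin{itemize}
\item Suppose $\varphi$ is a $(k+1)$-local $\calBc$-cover of $S$. Apply \cref{lem:star-blowup-pigeon} with $F = S$ and locality $k+1$. This gives at least $(k+1)-(k+1) = 0$ leaf-blocks, which is useless.
\item So I would first enlarge the blowup: use the $(k+2)$-star-blowup instead, which is still polynomial in size. By \cref{lem:star-blowup-pigeon} there is then a leaf-block $\hat H$ that is edge-disjoint from every guest hitting $x$.
\item Each $v \in V(\hat H)$ still has its edge $vx$ covered by some guest. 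That guest hits both $v$ and $x$, and it covers no edge of $\hat H$.
\item Restricting $\varphi$ to $\hat H$, that is, dropping such guests and intersecting the rest with $\hat H$, gives a $\calBc$-cover of $\hat H \cong H$ in which every vertex is hit at most $(k+1)-1 = k$ times.
\end{itemize}

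Restricting a guest to $\hat H$ must keep it complete bipartite. This holds because $\hat H$ together with $x$ is induced and no edges run between leaf-blocks. Hence a guest meeting $\hat H$ in edges, but not hitting $x$, lies entirely inside $\hat H$, since its vertex set must induce a connected complete bipartite piece; isolated vertices can simply be discarded. The reduction is therefore: map $H$ to the $(k+2)$-star-blowup of $H$, and it is correct in both directions.

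The main obstacle is exactly this off-by-one in the pigeonhole count. The forward direction works for any number of copies, so choosing $k+2$ copies settles both directions.
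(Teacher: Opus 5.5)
Your proof is correct and is essentially the paper's argument. The corollary is stated as a consequence of the star-blowup construction in \cref{thm:bc-gad1}, whose lower-bound proof is exactly your restriction to a leaf-block avoided by every guest hitting the center, obtained via \cref{lem:star-blowup-pigeon}. Your switch to $k+2$ copies is a sensible fix: \cref{thm:bc-gad1} is stated only for hosts with $\cl{\biclique}(H)=k$ exactly, and for a $(k+1)$-local cover of the $(k+1)$-star-blowup the pigeonhole bound $s-k$ of \cref{lem:star-blowup-pigeon} guarantees no leaf-block at all.
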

However, we only show that the \probloc{$\calBc$} is $\NP$-hard, that is when~$k$ is part of the input (cf. \cref{thm:local-bc-np}).

\cref{thm:bc-gad1} illustrates how star-blowups may increase the hitcount, but does not provide the final construction of the gadgets for~$\calBc$ which involves \emph{$\calBc$-blowups}. Let~$X'$ and~$Y'$ be the disjoint union of~$k+2$ and~$2k+3$ copies of a graph~$H$ respectively. The $k$-$\calBc$-blowup~$F$ is obtained from $X'$ and~$Y'$ by adding two vertices~$x,y$ and joining all vertices of~$X',x$ with edges to all vertices of~$Y',y$, see \cref{fig:bc-gad2} for an illustration.
\begin{figure}
		\centering
        \begin{subfigure}[t]{0.5\textwidth}
		\includegraphics[page=2]{figures/bc-blowup.pdf}
        \caption{}
        \label{fig:bc-gad2}
        \end{subfigure}\quad\quad
        \begin{subfigure}[t]{0.4\textwidth}
		\includegraphics[page=4]{figures/bc-blowup.pdf}
        \caption{}
        \label{fig:bc-gad-xy}
        \end{subfigure}
		\caption{(\subref{fig:bc-gad2}) A $k$-$\calBc$-blowup~$F$ of a graph~$H$ with support~$xy$ for~$k=1$. Contracting each of the copies of~$H$ to a single vertex yields a complete bipartite graph where one part consists of~$y$ and~$2k+3$ other vertices, the other part of~$x$ and~$k+2$ other vertices. (\subref{fig:bc-gad-xy}) In every~$(k+1)$-local $\calBc$-cover of~$F-xy$, there is a vertex~$x'$ in~$X'$ that is hit~$k$ times by guests that do not cover the edge~$yx'$, and a vertex~$y'$ in~$Y'$ that is hit~$k$ times by guests that neither cover~$x'y'$ nor~$y'x$.}
\end{figure}
We call the edge~$xy$ the \emph{support} of~$F$.
Note that here the graph induced by~$y$ and~$X'$, as well as the graph induced by any vertex~$x' \in V(X')$ and a copy~$Y_i'$ of~$H$ within~$Y'$ forms a star-blowup of~$H$. 
\begin{lemma}
\label{lem:bc-local-gadget}
    Let~$F$ be the~$k$-$\calBc$-blowup of an $n$-vertex graph~$H$ with support~$xy$.
    If~$\cl{\calBc}(H) = k$, then $(F,xy)$ is a $(k+1)$-local $\calBc$-gadget for the class~$\calBc$ of all complete bipartite graphs.
    Given~$H$, the graph~$F$ can be constructed in time~$\mathcal{O}(k^2n^2)$ and has order~$\mathcal{O}(kn)$.
\end{lemma}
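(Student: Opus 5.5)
The plan is to verify \eqref{itm:local-partial-reduction-gadget-coverable} by an explicit cover, and \eqref{itm:local-partial-reduction-gadget-subgraph} by applying \cref{lem:star-blowup-pigeon} twice along the path $y \to x' \to y' \to x$. The size bounds come directly from the construction.

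\textbf{Property \eqref{itm:local-partial-reduction-gadget-coverable}.} Take a single guest $B_0$: the complete bipartite graph with parts $\{x\}\cup V(X')$ and $\{y\}\cup V(Y')$. It covers every edge of $F$ except those inside the copies of $H$, and it hits every vertex of $F$ exactly once. Cover each of the $3k+5$ copies of $H$ separately by a $k$-local $\calBc$-cover, which exists since $\cl{\calBc}(H)=k$. The result is a $(k+1)$-local $\calBc$-cover of $F$, and $x,y$ are hit only once (by $B_0$).

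\textbf{Property \eqref{itm:local-partial-reduction-gadget-subgraph}.} Let $L$ be an extension of $F$ and $\varphi$ a $(k+1)$-local $\calBc$-cover of $L$. Every vertex of $F$ other than $x,y$ is adjacent only to vertices of $F$. A useful fact throughout is that an induced subgraph of a complete bipartite graph is complete bipartite, so restricting $\varphi$ to any copy of $H$ gives a $\calBc$-cover of that copy.

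\emph{First star-blowup.} The subgraph induced by $y$ and $X'$ is a $(k+2)$-star-blowup of $H$ with center $y$. By \cref{lem:star-blowup-pigeon}, some copy $X'_i$ is edge-disjoint from all guests hitting $y$. Hence the guests covering edges of $X'_i$ avoid $y$. Restricting them to $X'_i$ gives a $\calBc$-cover of $X'_i\cong H$, so some vertex $x'\in V(X'_i)$ is hit at least $k$ times by guests not containing $y$. Some further guest must cover $yx'$. As $x'$ is hit at most $k+1$ times, exactly one guest $G$ hits $x'$ and contains $y$; it is the unique guest covering $yx'$.

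\emph{Second star-blowup.} Now $x'$ together with $Y'$ forms a $(2k+3)$-star-blowup with center $x'$, and $x$ together with $Y'$ forms one with center $x$. Applying \cref{lem:star-blowup-pigeon} to both centers (each hit at most $k+1$ times) excludes at most $2k+2$ copies. So some copy $Y'_j$ is edge-disjoint from all guests hitting $x'$ or $x$. Arguing as before, some $y'\in V(Y'_j)$ is hit $k$ times by guests containing neither $x'$ nor $x$. Therefore a single guest $G'$ covers both $x'y'$ and $xy'$.

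\emph{Identifying the guest.} Since $G'$ contains $x'$ and $y$'s neighbour edge structure forces uniqueness, $G'$ hits $x'$ and covers $x'y'$. By the structure at $x'$ (one guest besides the $k$ internal ones), $G'=G$. So $G$ contains $x,x'$ on one side and $y,y'$ on the other, and in particular covers $xy$. Any vertex of $L-V(F)$ in $G$ would have to be adjacent to $y'$ or to $x'$, which is impossible. Hence $G\subseteq F$.

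\textbf{Size and construction time.} $F$ has $(3k+5)n+2=\mathcal{O}(kn)$ vertices and $\mathcal{O}(k^2n^2)$ edges, and it is built by straightforward enumeration in that time.

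\textbf{Main obstacle.} The delicate step is the counting at $x'$ and $y'$. One must justify that the $k$ hits from the internal cover really come from guests avoiding $y$ (respectively avoiding $x$ and $x'$). This is exactly what the edge-disjointness in \cref{lem:star-blowup-pigeon} provides, and it is why $Y'$ needs $2k+3$ copies: enough that one copy survives the exclusions for both centers $x$ and $x'$.
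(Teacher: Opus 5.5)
Your proof is correct. It uses the same ingredients as the paper: \cref{lem:star-blowup-pigeon} on the star-blowups centred at $y$, $x$ and $x'$, and counting hits at $x'$ and $y'$. The difference is how the argument is organized.

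The paper first proves the auxiliary bound $\cl{\calBc}(F-xy)\ge k+2$ by contradiction. It finds $x'$ and $y'$ as you do, then shows that in a cover of $F-xy$ no guest can cover both $x'y'$ and $y'x$, so $y'$ would be hit $k+2$ times. Only then does it derive \eqref{itm:local-partial-reduction-gadget-subgraph}. A $(k+1)$-local cover of an extension cannot restrict to a $(k+1)$-local cover of $F-xy$, so some guest contains a $4$-cycle $y x' y' x$ with $x'\in V(X')$ and $y'\in V(Y')$, and that guest lies inside $F$.

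You instead argue directly in the extension. You name the required guest explicitly: the unique guest through $x'$ and $y$, which you show coincides with the unique non-internal guest at $y'$. You also replace the paper's nested use of the pigeonhole lemma on $Y'$ (first with centre $x$, then with centre $x'$ on the $k+2$ surviving copies) by a single union bound over both centres, which is slightly cleaner. Your route is shorter and avoids the restriction step. The paper's route isolates a statement about $F$ alone, independent of the extension, at the cost of that extra transfer argument.

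One step needs an extra line: $G'=G$. From the first step, the guests at $x'$ are exactly $G$ and $k$ guests that each cover an edge inside $X'_i$. You must rule out that $G'$ is one of those $k$ guests.

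This holds because $G'$ contains $x$, and $x$ has no neighbour in $X'$. In a complete bipartite guest containing $x$, every edge has an endpoint on the side opposite $x$, and that endpoint must be adjacent to $x$. So $G'$ covers no edge inside $X'_i$, and hence $G'=G$. The paper uses the same observation in its step (iii).

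Your sentence about ``$y$'s neighbour edge structure forcing uniqueness'' justifies nothing and should be replaced by this argument.
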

\begin{proof}
    Let~$X_1', \dots, X_{k+2}'$ and~$Y_1',\dots, Y_{2k+3}'$ denote the copies of~$H$ in~$X'$ and~$Y'$ respectively.
    Clearly, $F$ is a graph on~$\mathcal{O}(kn)$~vertices and can be constructed in time~$\mathcal{O}(k^2n^2)$ from~$H$.
    
    To show that~$(F,xy)$ satisfies \eqref{itm:local-partial-reduction-gadget-coverable}, we need to construct a $(k+1)$-local $\biclique$-cover hitting the vertices $x$ and $y$ only once.
    Covering each copy~$X_i'$ and~$Y_j'$ separately with a $k$-local cover of~$H$ yields a $k$-local $\calBc$-cover~$\varphi'\colon B_1 \cupdot \dots \cupdot B_t \to X' \cup Y'$ of~$X' \cup Y'$.
    The remaining edges form a complete bipartite graph~$B$, see \cref{fig:bc-gad2}.
    That is, extending $\varphi'$ by adding~$B$ as guest yields a $(k+1)$-local $\calBc$-cover $\varphi\colon B \cupdot B_1 \cupdot \dots \cupdot B_t \to F$ of~$F$ certifying \eqref{itm:local-partial-reduction-gadget-coverable}.
	
	To prove \eqref{itm:local-partial-reduction-gadget-subgraph}, we show that $\cl{\calBc}(F-xy) \geq k+2$.
    \eqref{itm:local-partial-reduction-gadget-subgraph} then follows.
    Indeed, consider a $(k+1)$-local-$\biclique$-cover~$\varphi\colon B_1 \cupdot \dots \cupdot B_t \to Q$ of an extension~$Q$ of~$F$.
    As~$F$ is an induced subgraph of~$Q$, the restriction~$\varphi_{\mid F}$ of~$\varphi$ to $F$ is a $\biclique$-cover.
    If each of the complete bipartite graphs~$\varphi(B_i) \cap F$ of~$\varphi_{\mid F}$ induces a complete bipartite graph in~$F-xy$, the restriction of~$\varphi$ to~$F-xy$ would yield a $(k+1)$-local $\calBc$-cover, a contradiction to $\cl{\calBc}(F-xy) \geq k+2$.
    Thus, there is some guest~$B_i$ such that~$\varphi(B_i) \cap F$ is a complete bipartite graph, but $\varphi(B_i) \cap (F-xy)$ is not.
    That is,~$B_i$ covers a cycle on vertices~$y,x',y',x$ (in order) with $x' \in V(X')$ and $y' \in V(Y')$. 
    As neither~$x'$ nor~$y'$ is adjacent to any vertex of~$Q-F$, the guest~$B_i$ lies completely within~$F$ and \eqref{itm:local-partial-reduction-gadget-subgraph} is satisfied.

    It remains to prove that $\cl{\calBc}(F-xy) \geq k+2$.
    Suppose there is a $(k+1)$-local $\calBc$-cover~$\varphi\colon B_1 \cupdot \dots \cupdot B_t \to F-xy$ for some~$\ell \in \N$.
    We show that
    \begin{enumerate}[label=\enumstyle{(\roman*)}, ref=\roman*]
        \item\label{prop:x'} there exists an index~$r \in [k+2]$ and a vertex~$x' \in V(X_r')$ such that $x'$ is hit at least~$k$~times by guests~$B_i$, each of which covers an edge within~$X_r'$ and does not cover the edge~$yx'$,
        \item\label{prop:y'} there exists an index~$s \in [2k+3]$ and a vertex~$y' \in V(Y_s')$ such that~$y'$ is hit at least~$k$ times by guests~$B_i$ that cover neither the edge~$x'y'$ nor~$y'x$,
        \item\label{prop:no_guest_hitting_x_x'} no guest~$B_i$ hits both~$x'$ and~$x$, 
    \end{enumerate}
    see \cref{fig:bc-gad-xy} for an illustration.
    As there is no guest that covers both the edge~$x'y'$ and~$y'x$ by \eqref{prop:no_guest_hitting_x_x'}, it now follows with \eqref{prop:y'} that $y'$ is hit at least~$k+2$~times, a contradiction to~$\varphi$ being $(k+1)$-local.

    To show \eqref{prop:x'}, observe that the graph~$F'$ induced by~$y$ and~$X'$ is a $(k+2)$-star-blowup of~$H$.
    The restriction of~$\varphi$ to~$F'$ is also a $(k+1)$-local $\calBc$-cover.
    Thus, by \cref{lem:star-blowup-pigeon}, there is a copy~$X_r'$ such that no guest~$B_i$ covers both edges within~$X_r'$ and edges connecting~$y$ to~$X_r'$.
    Since~$\cl{\calBc}(X_r') = \cl{\calBc}(H) = k$, there exists a vertex~$x' \in X_r'$ that is hit at least~$k$ times by guests~$B_i$ that cover edges within~$X_r'$, and none of which covers the edge~$yx'$.

    We now prove \eqref{prop:y'}. 
    Note that the graph induced by~$Y'$ and~$x$ is a $(2k+3)$-star-blowup of~$H$.
    The same argument as above shows that there is a set~$S \subseteq [2k+3]$ with~$\abs{S} = k+2$ such that no guest~$B_i$ covers both edges within~$Y_j'$ and edges connecting~$Y_j'$ to~$x'$ for any~$j \in S$.
    Similarly, as the graph induced by~$x'$ and the copies~$Y_j'$ with~$j \in S$ is a $(k+2)$-star-blowup of~$H$, there is an index~$s \in S$ such that no guest~$B_i$ covers both edges within~$Y_s'$ and the edge~$x'y'$. 
    Thus, as~$\cl{\calBc}(Y_s') = \cl{\calBc}(H) = k$, there is a vertex~$y' \in V(Y_s')$ that is hit at least $k$~times by guests~$B_i$ none of which covers the edge~$x'y'$ or~$y'x$.

    To prove \eqref{prop:no_guest_hitting_x_x'}, suppose some guest~$B_j$ covers both~$x'y'$ and~$y'x$. 
    As~$B_j$ is a complete bipartite graph, it cannot also cover the edge~$yx'$ since~$\varphi(B_j) \subseteq F-xy$, nor any edge within~$X_r'$.
    That is, by \eqref{prop:x'}, the vertex~$x'$ is hit at least~$k+2$ times: once by the guest~$B_j$, once by a guest covering the edge~$yx'$ and~$k$ times by other guests~$B_i$, a contradiction.
\end{proof}

\cref{thm:bc-gad1} provides a construction of graphs~$H$ with $\cl{\biclique}(H) = k$ for every~$k \in \N$, which results with \cref{lem:bc-local-gadget} in $(k+1)$-local $\calBc$-gadgets. 
Yet, their size grows exponentially with~$k$.
To obtain $K$-local $\calBc$-gadgets we need such graphs~$H$ of polynomial size \eqref{itm:partial-local-family-poly-size}. 
Erd\H{o}s, R\'enyi and S\'os \cite[Theorem~1]{erdosProblemGraphTheory1966}\footnote{The construction provided by Erd\H{o}s, R\'enyi and S\'os yields for every prime power~$p^d$ a $C_4$-free dense graph where every vertex has degree~$p^d$ or~$p^d+1$. When~$p$ is prime, there also exists a vertex of degree~$p+1$. While both properties on the degree are not part of the theorem statement, Erd\H{o}s, R\'enyi and S\'os state them explicitly within the proof.} construct for every prime~$p$ a dense graph~$G^{(p)}$ which contains no $4$-cycle~$C_4$ as a subgraph. 
Every $\calBc$-cover of such a graph~$G^{(p)}$ is a cover which only consists of stars.
Yet, stars only contain a linear number of edges.
Thus, if~$G^{(p)}$ is dense, the local $\calBc$-covering number of~$G^{(p)}$ is large.
\begin{lemma}[{Erd\H{o}s, R\'enyi, S\'os \cite[Theorem~1]{erdosProblemGraphTheory1966}}]
	\label{thm:c4-free}
	For every prime $p$, there exists a graph~$G^{(p)}$ on $p^2+p+1$~vertices such that 
	\begin{enumerate}[label=\enumstyle{(\roman*)}, ref=\roman*]
		\item\label{itm:c4-free-c4} $G^{(p)}$ does not contain a $C_4$, and
		\item\label{itm:c4-free-deg} every vertex $v \in V(G^{(p)})$ has degree $\deg(v) \in \{p-1, p\}$ and there is at least one vertex of degree $p$.
	\end{enumerate}
\end{lemma}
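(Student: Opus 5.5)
We take $G^{(p)}$ to be the Erd\H{o}s--R\'enyi polarity graph and then delete a suitable set of edges. Deleting edges can never create a $C_4$, so the only work after the polarity graph is adjusting the degrees.

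\textbf{Step 1: the polarity graph.} Let the vertices be the $p^2+p+1$ one-dimensional subspaces of $\mathbb{F}_p^3$. Join distinct $u,v$ whenever $u\cdot v=0$.
\begin{itemize}
\item For distinct $u,w$, the points orthogonal to both form exactly one point (spanned by $u\times w$). Hence no two vertices have two common neighbours, and the graph contains no $C_4$.
\item Every point has exactly $p+1$ orthogonal points, since its polar is a projective line. Call $v$ \emph{absolute} if $v\cdot v=0$.
\item An absolute vertex therefore has degree $p$, and a non-absolute vertex has degree $p+1$.
\item The quadratic form $x_1^2+x_2^2+x_3^2$ has $p+1$ projective zeros, so absolute vertices exist. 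In particular some vertex already has degree $p$.
\end{itemize}

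\textbf{Step 2: reduce to a $\{1,2\}$-factor.} Let $N$ be the set of non-absolute vertices and let $G_N$ be the subgraph induced by $N$. We will delete only edges of $G_N$, so the absolute vertices keep degree exactly $p$. It then suffices to find a spanning subgraph $D\subseteq G_N$ in which every vertex has degree $1$ or $2$. Deleting $E(D)$ leaves every vertex of $N$ with degree $p$ or $p-1$, as required.

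Each $v\in N$ has at most $p+1$ absolute neighbours. In fact it has $0$ or $2$ of them for odd $p$: its polar line is secant, tangent or external to the conic. So $G_N$ has minimum degree at least $p-1\ge 1$ and maximum degree at most $p+1$. It also inherits $C_4$-freeness.

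\textbf{Step 3: existence of $D$ (the main obstacle).} We invoke the Amahashi--Kano/Las Vergnas criterion: a graph has a spanning subgraph with all degrees in $\{1,2\}$ iff $i(G-S)\le 2|S|$ for every vertex set $S$. Here $i(\cdot)$ counts isolated vertices.

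Fix $S$ and let $I$ be the set of isolated vertices of $G_N-S$. Each $v\in I$ has at least $p-1$ neighbours, all lying in $S$. Since $G_N$ is $C_4$-free, every pair of vertices of $S$ has at most one common neighbour in $I$. Counting such pairs gives
\[ |I|\binom{p-1}{2}\le\binom{|S|}{2}. \]
Using $|S|\le p^2+p+1$, this yields $|I|\le |S|\,\tfrac{p^2+p+1}{(p-1)(p-2)}$, which is at most $2|S|$ once $p\ge 7$. This is the step we expect to be the real obstacle.

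For the small primes $p\in\{2,3,5\}$ the counting bound is not strong enough. There we would refine the count using the exact degrees (external versus internal points), or simply exhibit $D$ directly. The graphs have $7$, $13$ and $31$ vertices. For $p=2$ the absolute points are the three points with exactly two nonzero coordinates, and a perfect-ish matching plus one path on the four remaining points is found by inspection.

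\textbf{Step 4: conclusion.} With $D$ in hand, let $G^{(p)}$ be the polarity graph minus $E(D)$. It has $p^2+p+1$ vertices and is $C_4$-free as a subgraph of a $C_4$-free graph. All its degrees lie in $\{p-1,p\}$. Every absolute vertex is untouched, so it still has degree $p$, which gives the required vertex of degree $p$.
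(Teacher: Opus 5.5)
Your route differs from the paper's. The paper gives no argument and only cites Erd\H{o}s--R\'enyi--S\'os. Their polarity graph has degrees $p$ and $p+1$, as the paper's own footnote says, so it does not satisfy the stated degree set $\{p-1,p\}$ as it stands. Your deletion of a $\{1,2\}$-factor of $G_N$ is exactly the extra step needed to get there. For $p\ge 7$ your argument is complete and correct:
\begin{itemize}
\item deleting edges preserves $C_4$-freeness;
\item the Las Vergnas/Amahashi--Kano criterion for $[1,2]$-factors applies;
\item for odd $p$ a non-absolute point has $0$ or $2$ absolute neighbours, because its polar cannot be tangent (tangents are the polars of conic points);
\item the pair-counting bound gives $|I|\le 2|S|$.
\end{itemize}

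\textbf{The gap: the small primes are not proved, and your $p=2$ claim is false.} Over $\mathbb{F}_2$ the form is $(x_1+x_2+x_3)^2$, so the three absolute points are exactly the line $(1,1,1)^\perp$. The non-absolute vertex $(1,1,1)$ therefore has all three of its neighbours absolute. Thus $G_N$ is a triangle on $(1,0,0),(0,1,0),(0,0,1)$ plus the isolated vertex $(1,1,1)$. No spanning $D\subseteq G_N$ with degrees in $\{1,2\}$ exists, and your claim $\delta(G_N)\ge p-1$ fails here. For $p=2$ you must either delete an edge at an absolute vertex, or use another graph:
\begin{itemize}
\item Deleting at an absolute vertex is allowed, since degree $p-1$ is permitted and the other absolute vertices keep degree $p$. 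For example, remove $(1,1,1)(1,1,0)$ and two edges of the triangle.
\item Alternatively, simply take $C_7$.
\end{itemize}

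For $p\in\{3,5\}$ you only announce a refinement, but it does go through. For odd $p$ you have $|S|\le |N|=p^2$, so $|I|\le |S|\frac{|S|-1}{(p-1)(p-2)}\le |S|\frac{p+1}{p-2}\le 2|S|$ once $p\ge 5$. For $p=3$, combine two bounds: $|I|\le\binom{|S|}{2}$ suffices when $|S|\le 5$, and $|I|\le |N|-|S|=9-|S|$ suffices when $|S|\ge 3$. With these repairs the statement holds for every prime. The paper only uses $p\ge 5$, so the $p=2$ slip does not affect its application.
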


Knauer and Ueckerdt show that the \emph{star arboricity}~$\cl{\calS}(H)$ (i.e., the local $\calS$-covering number for the class~$\calS$ of all stars) is bounded from above and below in terms of what is known as the \emph{pseudoarboricity}~$p(H) = \max_{X \subseteq V(H)}\frac{\abs{E(H[X])}}{\abs{X}}$ for every graph~$H$ where $H[X]$ denotes the subgraph of~$H$ induced by the vertex set~$X$.\footnote{In fact, pseudoarboricity coincides with \emph{arboricity} (that is the $\calF$-covering number for the class~$\calF$ of all forests \cite[Theorem~8]{knauer2016threeways}).}
They prove that $p(H) \leq \cl{\calS}(H) \leq p(H)+1$ \cite[Theorem~9]{knauer2016threeways}.
As $ \delta(H) \cdot \abs{X} \leq 2 \cdot \abs{E(H[X])} \leq \Delta(H) \cdot \abs{X}$ for every~$X \subseteq V(H)$ we thus obtain a lower and an upper bound on~$\cl{\calS}(H)$ in the minimum and maximum degree~$\delta(H)$ and $\Delta(H)$, from which we compute~$\cl{\calBc}(G^{(p)})$.
\begin{lemma}[{Knauer, Ueckerdt \cite{knauer2016threeways}}]
	\label{thm:star-half-deg}
	For every graph $H$, we have \[\ceil{\frac{\delta(H)}{2}} \leq \cl{\stars}(H) \leq \ceil{\frac{\Delta(H)}{2}} + 1.\]
\end{lemma}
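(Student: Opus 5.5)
The plan is to derive both inequalities from the pseudoarboricity bound $p(H) \leq \cl{\stars}(H) \leq p(H)+1$ of Knauer and Ueckerdt quoted just before the statement. We then estimate $p(H)=\max_{X \subseteq V(H)} \frac{\abs{E(H[X])}}{\abs{X}}$ through the degree sum, using $\delta(H)\abs{X} \leq 2\abs{E(H[X])} \leq \Delta(H)\abs{X}$. For the lower bound we pick the single set $X = V(H)$. For the upper bound we need the estimate uniformly over all $X$. Since the quoted bound is stated for $p(H)$ rather than $\ceil{p(H)}$, I would also sketch self-contained arguments for both directions so that the rounding is handled correctly.

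\emph{Lower bound.} Take $X=V(H)$. Then $p(H) \geq \frac{\abs{E(H)}}{\abs{V(H)}} \geq \frac{\delta(H)}{2}$, because $2\abs{E(H)} = \sum_v \deg(v) \geq \delta(H)\abs{V(H)}$. Since $\cl{\stars}(H)$ is an integer, $\cl{\stars}(H) \geq p(H)$ upgrades to $\cl{\stars}(H) \geq \ceil{\delta(H)/2}$.

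A direct check avoids relying on the cited result. Fix a $k$-local $\stars$-cover and assign each edge to a star covering it. In that star the edge has at least one endpoint that is a leaf, and we charge the edge to that leaf. A leaf of a star is incident to exactly one edge of that star, and each vertex is hit by at most $k$ stars. Hence each vertex receives at most $k$ charges, so $\abs{E(H)} \leq k\abs{V(H)}$. Together with the degree sum this gives $k \geq \delta(H)/2$.

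\emph{Upper bound.} For every $X$ we have $\abs{E(H[X])} \leq \frac{\Delta(H[X])\abs{X}}{2} \leq \frac{\Delta(H)\abs{X}}{2}$, so $p(H) \leq \Delta(H)/2$. Then $\cl{\stars}(H) \leq p(H)+1 \leq \Delta(H)/2 + 1 \leq \ceil{\Delta(H)/2}+1$.

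To be self-contained I would instead argue via orientations. By Hakimi's theorem (or an Euler-tour argument directly giving indegree at most $\ceil{\Delta(H)/2}$), $H$ has an orientation with all indegrees at most $\ceil{\Delta(H)/2}$; the Euler-tour version takes each component, pairs up odd-degree vertices with dummy edges, and orients along an Euler tour. For every vertex $v$, take the star consisting of all edges oriented out of $v$. These stars cover $E(H)$. A vertex $v$ is hit by its own star and by the stars of its in-neighbours, so its hitcount is at most $1+\ceil{\Delta(H)/2}$.

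The only step needing care is the rounding in the upper bound, that is, obtaining indegree $\ceil{\Delta/2}$ rather than something larger. The Euler-tour orientation handles this cleanly, and everything else is a routine degree count.
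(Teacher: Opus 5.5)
Your primary route, the Knauer--Ueckerdt pseudoarboricity bound combined with the degree-sum estimate (taking $X = V(H)$ for the lower bound), is exactly how the paper obtains the lemma, and your self-contained leaf-charging and Euler-tour orientation arguments are both correct. Your caution about rounding is well founded: with the paper's unrounded $p(H)$ the inequality $\cl{\stars}(H) \leq p(H)+1$ actually fails. For $K_4$ one has $p(K_4)=\frac{3}{2}$ but $\cl{\stars}(K_4)=3$, since a $2$-local star cover would need $6+t\leq 8$, hence at most two stars, and two stars cover at most five edges. So your first chain $p(H)+1\leq \Delta(H)/2+1$ passes through a false step, and the upper bound should be read via $\ceil{p(H)}\leq\ceil{\Delta(H)/2}$ or via your orientation argument.
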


\begin{lemma}
	\label{thm:bc-p-cover}
	If $p \geq 5$ is prime, then $\cl{\biclique}(G^{(p)}) = \frac{p+3}{2}$.
\end{lemma}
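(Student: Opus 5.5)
The plan is to reduce the statement to star covers and then prove the two bounds separately. The upper bound comes from \cref{thm:star-half-deg}; the lower bound needs a direct counting argument.

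\emph{Reduction to stars.} By \cref{thm:c4-free}~\eqref{itm:c4-free-c4}, $G^{(p)}$ contains no $C_4$. Every complete bipartite graph with at least two vertices on each side contains a $C_4$. Hence every guest of a $\calBc$-cover of $G^{(p)}$ is a star, and conversely every star lies in $\calBc$. So we get $\cl{\calBc}(G^{(p)}) = \cl{\stars}(G^{(p)})$.

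\emph{Upper bound.} By \cref{thm:c4-free}~\eqref{itm:c4-free-deg} we have $\Delta(G^{(p)}) = p$. Since $p$ is odd, \cref{thm:star-half-deg} gives
\[\cl{\stars}(G^{(p)}) \leq \ceil{\tfrac{p}{2}} + 1 = \tfrac{p+1}{2} + 1 = \tfrac{p+3}{2}.\]

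\emph{Lower bound.} The lower bound of \cref{thm:star-half-deg}, and even the pseudoarboricity bound, only give about $\frac{p+1}{2}$, so a sharper argument is needed; this is the main step. Suppose there is a $\frac{p+1}{2}$-local star cover $\varphi$. We normalise it in three ways:
\begin{itemize}
\item merge all stars with the same center (this only lowers hitcounts), so every vertex is the center of at most one star;
\item assign each edge to exactly one star covering it;
\item treat each single edge $K_2$ as a star with a chosen center.
\end{itemize}
Now orient each edge from the center of its assigned star towards the leaf. For every vertex $v$ we get
\[\hit{v} \geq [v \text{ is a center}] + \indegree(v).\]
If $v$ is not a center, then all its incident edges point into $v$, so $\hit{v} \geq \deg(v) \geq p-1$. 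For $p \geq 5$ this exceeds $\frac{p+1}{2}$, which is impossible. Hence every vertex is a center, and so $\indegree(v) \leq \frac{p+1}{2} - 1 = \frac{p-1}{2}$ for all $v$. Summing over all $n = p^2+p+1$ vertices gives
\[\abs{E(G^{(p)})} = \sum_{v} \indegree(v) \leq n\cdot\tfrac{p-1}{2}.\]
On the other hand, by \cref{thm:c4-free}~\eqref{itm:c4-free-deg} every vertex has degree at least $p-1$ and at least one vertex has degree $p$. Therefore
\[2\abs{E(G^{(p)})} \geq n(p-1) + 1 > n(p-1),\]
which contradicts the previous inequality. Thus $\cl{\calBc}(G^{(p)}) \geq \frac{p+3}{2}$, and together with the upper bound this proves the statement.

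The only delicate point is the normalisation step. Merging stars with a common center must keep each guest a star and must not increase any hitcount, and a $K_2$ guest must be assigned a center. Both are routine, since a union of stars sharing a center is again a star.
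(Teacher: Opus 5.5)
Your proof is correct and follows essentially the same route as the paper: reduce to star covers via $C_4$-freeness, take the upper bound from \cref{thm:star-half-deg}, and for the lower bound use $\delta(G^{(p)}) = p-1 > \frac{p+1}{2}$ to force every vertex to be a star center, then finish with an edge count. Your orientation bookkeeping $\operatorname{hit}(v) \geq 1 + \indegree(v)$ is a more explicit form of the paper's count $\sum_i \abs{V(S_i)} \geq \abs{E(G^{(p)})} + t$ with $t \geq \abs{V(G^{(p)})}$.
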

\begin{proof}
    Recall that every $\calBc$-cover of~$G^{(p)}$ is an $\calS$-cover for the class~$\calS$ of star forests as~$G^{(p)}$ is $C_4$-free (cf. \cref{thm:c4-free}).
    It thus suffices to prove $\cl{\calS}(G^{(p)}) = \frac{p+3}{2}$.

    As no prime $p \geq 5$ is even and $\Delta(G^{(p)}) = p$ by \cref{thm:c4-free}\;\eqref{itm:c4-free-c4}, we obtain $\cl{\calS}(G^{(p)}) \leq \frac{p+3}{2}$ by \cref{thm:star-half-deg}.
    
    For the lower bound, let $\varphi\colon S_1 \cupdot \dots \cupdot S_t \to G^{(p)}$ be a $k$-local $\calS$-cover for some~$k \in \N$.
    If~$p-1 \leq k$, we clearly have $k \geq \frac{p+3}{2}$ since $p \geq 5$.
    Thus, $\cl{\calS}(G^{(p)}) \geq \frac{p+3}{2}$.
    If $p-1 > k$, each vertex of~$G^{(p)}$ is the center of some distinct star~$S_i$ as $\delta(G^{(p)}) = p-1$ (cf. \cref{thm:c4-free}\;\eqref{itm:c4-free-deg}).
    That is $t \geq \abs{V(G^{(p)})}$. 
    As $\delta(G^{(p)}) = p-1$ and there exists a vertex of degree~$d$ by \cref{thm:c4-free}\;\eqref{itm:c4-free-deg}, we obtain
    \begin{align*}
    k \cdot \abs{V(G^{(p)})} &\geq \sum_{v \in V(H)} \hit{\varphi}{v} = \sum_{i = 1}^t \abs{V(S_i)} \geq \sum_{i = 1}^t (\abs{E(S_i)} + 1) \\
    &\geq \abs{E(G^{(p)})} + t > \frac{p-1}{2} \abs{V(G^{(p)})} + t.
    \end{align*}
    Since $t \geq \abs{V(G^{(p)})}$ this yields $k \geq \frac{p+3}{2}$.
\end{proof}

Thus, by \cref{lem:bc-local-gadget}, there exists a $\frac{p+5}{2}$-local $\calBc$-gadget whose size is polynomial in~$p$ for every prime~$p \geq 5$.

\begin{lemma}
	\label{thm:bc-fam}
	There are $K$-local $\calBc$-gadgets for the class~$\calBc$ of all complete bipartite graphs.
\end{lemma}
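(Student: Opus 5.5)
Our plan is to take $K = \set{\frac{p+5}{2} \given p \geq 5 \text{ prime}}$ and, for $k = \frac{p+5}{2} \in K$, to let $(F_k, e_k)$ be the $\frac{p+3}{2}$-$\calBc$-blowup of $G^{(p)}$ with support $e_k = xy$. By \cref{thm:bc-p-cover} we have $\cl{\calBc}(G^{(p)}) = \frac{p+3}{2}$. Applying \cref{lem:bc-local-gadget} with $H = G^{(p)}$ and $k-1 = \frac{p+3}{2}$ then shows that $(F_k,e_k)$ is a $k$-local $\calBc$-gadget, which gives \eqref{itm:partial-local-family-gadget}.

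For \eqref{itm:partial-local-family-poly-size}, given $k \in K$ we set $p = 2k-5$. We build $G^{(p)}$ via the Erd\H{o}s--R\'enyi--S\'os construction, which is the polarity graph on the points of the projective plane $\mathrm{PG}(2,p)$: its vertices are the one-dimensional subspaces of $\mathbb{F}_p^3$, and two of them $u,v$ are adjacent if $u^\top v = 0$ and $u \neq v$. Listing the $p^2+p+1$ normalized vectors and testing all pairs takes $\mathcal{O}(p^4)$ arithmetic operations over $\mathbb{F}_p$. By \cref{lem:bc-local-gadget}, the blowup can then be built in $\mathcal{O}(k^2 n^2)$ time with $n = p^2+p+1 = \mathcal{O}(k^2)$. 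The total is polynomial, so some fixed $\beta$ (for instance $\beta = 6$) suffices.

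For \eqref{itm:partial-local-family-poly-k}, given $\ell$ we must find a prime $p \geq \max\{5, 2\ell-5\}$ with $p = \mathcal{O}(\ell)$. Bertrand's postulate guarantees a prime in $[m, 2m]$ for $m = \max\{5, 2\ell-5\}$. We find it by trial division over the candidates, which takes $\mathcal{O}(\ell \cdot \sqrt{\ell}) \subseteq \mathcal{O}(\ell^2)$ time. The resulting $k = \frac{p+5}{2}$ satisfies $k \geq \ell$ and $k = \mathcal{O}(\ell)$.

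We expect the main obstacle to be \eqref{itm:partial-local-family-poly-size}. \Cref{thm:c4-free} only asserts that $G^{(p)}$ exists, so we have to argue that it is explicitly constructible in polynomial time and has the stated properties. We would settle this by citing the explicit polarity-graph construction and noting that it only needs enumeration of $\mathbb{F}_p^3$ and dot-product tests. The remaining steps are direct combinations of earlier lemmas.
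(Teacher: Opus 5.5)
Your proposal is correct and follows the paper's proof essentially step for step: the same $K = \set{\frac{p+5}{2} \given p \geq 5 \text{ prime}}$, the same $\frac{p+3}{2}$-$\calBc$-blowup of $G^{(p)}$ via \cref{thm:bc-p-cover,lem:bc-local-gadget}, and the same Bertrand-postulate search, and your explicit polarity-graph construction usefully supplies the constructibility of $G^{(p)}$ that the paper leaves implicit. One small point: the polarity graph has degrees $p$ and $p+1$ (as the paper's footnote on the Erd\H{o}s--R\'enyi--S\'os construction says), not $p-1$ and $p$ as stated in \cref{thm:c4-free}, but rerunning the argument of \cref{thm:bc-p-cover} with $\delta = p$ and $\Delta = p+1$ still yields $\cl{\calBc}(G^{(p)}) = \frac{p+3}{2}$, so nothing else changes.
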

\begin{proof}
    For a prime~$p$ with~$p \geq 5$, there is a graph~$G^{(p)}$ on $\mathcal{O}(p^2)$~vertices with~$\cl{\calBc}(G^{(p)}) \leq \frac{p+3}{2}$ by \cref{thm:c4-free,thm:bc-p-cover}.
    Thus, there exists a $k$-local $\calBc$-gadget~$(G_k,e_k)$ on $\mathcal{O}(p^3)$~vertices with $k = \frac{p+5}{2}$ by \cref{lem:bc-local-gadget} that can be constructed in time~$\mathcal{O}(p^6)$.
    That is, the gadgets~$(G_k,e_k)$ with $k \in K = \set{\frac{p+5}{2} \given \text{$p$ is prime with~$p \geq 5$}}$ satisfy \eqref{itm:partial-local-family-gadget} and \eqref{itm:partial-local-family-poly-size}.
    
    It remains to verify property~\eqref{itm:partial-local-family-poly-k}.
    By Betrand's postulate there is for every integer~$n \in \N$ a prime in the interval~$[n,2n]$.
    Testing for every integer~$t \in [\ell,2\ell]$ whether~$2t-5$ is not divided by any~$t'$ with $1 < t' < 2t-5$ (i.e., whether~$2t-5$ is prime) yields an $\mathcal{O}(\ell^2)$-time algorithm for finding an integer~$k \in K$ with~$k \geq \ell$ and~$k \in \mathcal{O}(\ell)$.
    Thus, \eqref{itm:partial-local-family-poly-k} holds and the gadgets~$(G_k,e_k)$ are $K$-local $\calBc$-gadgets.
\end{proof}

As the \probpartloc{$\calBc$} is $\NP$-hard for the class~$\calBc$ of complete bipartite graphs and there are~$K$-local $\calBc$-gadgets for the class~$\calBc$, the \probloc{$\calBc$} is $\NP$-complete (cf. \cref{obs:NP-membership,thm:local-partial-reduction}).

\localBicliqueNP*

\section{Covers with finite guest classes}

In this section we study $\calG$-covering problems where the class~$\calG$ is finite.
This problem is closely related to the \emph{\probdecomp{$G$}}, which consists of deciding whether the edges of a given graph~$H$ can be decomposed into copies of~$G$, a problem studied by Holyer \cite{holyerNPCompletenessEdgePartitionProblems1981}.
They show that the \probdecomp{$K_n$} is $\NP$-hard for every~$n \geq 3$. 
As it is known that the $G$-decomposition problem can be solved in polynomial time when~$\abs{E(G)} \leq 2$, Holyer conjectured that deciding whether a graph~$H$ admits a $G$-decomposition is~$\NP$-hard if and only if~$G$ is a graph on at least three edges.
Yet, when~$G$ is disconnected, this does not necessarily hold. 
Brouwer and Wilson, and independently Alon, characterize graphs which admit $G$-decompositions when~$G$ is a matching.
Their characterization yields a polynomial time algorithm for the $G$-decomposition problem \cite{brouwerDecompositionGraphsLadder1980,alonNoteDecompositionGraphs1983}.
However, when some component of~$G$ contains at least three edges, the problem is $\NP$-hard.
\begin{restatable}[{Dor, Tarsi, Bryś and Lonc \cite{dorGraphDecompositionNPComplete1997,brysPolynomialCasesGraph2009a}}]{theorem}{holyerConj}
	\label{thm:holyer-np-p}
    Let~$G$ be a graph.
    \begin{enumerate}[label=\enumstyle{(\roman*)}, ref=\roman*]
		\item If each component of~$G$ consists of at most two edges, then the $G$-decomposition problem has a polynomial time solution.
		\item Otherwise, the problem is $\NP$-complete.
	\end{enumerate}
\end{restatable}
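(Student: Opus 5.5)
This statement is cited from Dor and Tarsi together with Bryś and Lonc, and I would prove it by combining their two results rather than from scratch. The plan has two parts, one for each item of \cref{thm:holyer-np-p}.

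For the polynomial part, let $G$ be a graph each of whose components has at most two edges. Isolated vertices of $G$ can be ignored, since they only require $H$ to have enough vertices. So $G$ is a disjoint union of $a$ copies of $P_3$ (two-edge paths) and $b$ copies of $K_2$.
\begin{itemize}
\item If $a=0$, then $G$ is a matching, and the characterization of Brouwer--Wilson and Alon gives a direct test: $H$ is decomposable iff $b$ divides $\abs{E(H)}$ and $\chi'(H)$-type degree and size conditions hold, namely $\Delta(H)\le \abs{E(H)}/b$. This can be checked in polynomial time.
\item If $a\ge 1$, I would invoke Bryś and Lonc, who reduce the problem to matching-type conditions. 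First they decide whether $E(H)$ splits into $a\abs{E(H)}/(2a+b)$ two-edge paths plus a matching part. Then they use the fact that the edges of a graph with an even number of edges in each component decompose into $P_3$'s. The real work is a flow/matching argument, which I would take from their paper.
\end{itemize}

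For the hardness part, suppose some component $C$ of $G$ has at least three edges. In the connected case ($G=C$), Dor and Tarsi prove Holyer's conjecture. Their method reduces from $3$-dimensional matching or from Holyer's $K_3$-decomposition hardness via gadgets that force copies of $G$ into rigid positions. For disconnected $G$, one pads instances with the other components: to each copy of a large-component gadget one attaches disjoint copies of the remaining components, so that every decomposition must pair them canonically. Membership in $\NP$ is immediate, since a decomposition is a certificate of polynomial size.

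The main obstacle is the general connected case of Dor--Tarsi, whose gadget construction is long and relies on a case analysis over the structure of $G$ (trees versus graphs with cycles, and handling a leaf or bridge). Since the paper treats \cref{thm:holyer-np-p} as a known result, the intended "proof" is simply to cite both papers.
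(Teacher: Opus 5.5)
Citing Dor--Tarsi and Bry\'s--Lonc is all the paper does, so your overall plan matches it. The problem is the one concrete criterion you add, which is false.

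\textbf{The matching case.} For $G=bK_2$ it is not true that $b\mid\abs{E(H)}$ together with $\Delta(H)\le\abs{E(H)}/b$ suffices. Take $K_3$ plus a disjoint edge with $b=2$: both conditions hold. Yet each copy of $2K_2$ contains at most one of the three pairwise intersecting triangle edges, and only two copies are available. Likewise, the Petersen graph with $b=5$ satisfies both conditions, but a decomposition would be a $1$-factorization, which it lacks.

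The correct characterization (Alon; Brouwer--Wilson) is $b\mid\abs{E(H)}$ and $\chi'(H)\le\abs{E(H)}/b$. It uses that a proper edge colouring can be made equitable by recolouring alternating paths. With this criterion, polynomial-time decidability is no longer automatic. By Vizing, only the tight case $\abs{E(H)}=b\,\Delta(H)$ matters, and deciding $\chi'(H)=\Delta(H)$ is $\NP$-complete in general.

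For fixed $b$ the tight case can be settled. At most $2b$ vertices have degree $\Delta$, so by Vizing's adjacency lemma a $\Delta$-critical subgraph would have minimum degree at least $\Delta-2b+1$. Together with having at most $b\Delta$ edges, this forces $\Delta<4b$, i.e.\ a bounded instance. This argument has to be supplied or taken from the citation; it cannot be replaced by a degree test.

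\textbf{The hardness part.} For (ii), the padding step is unnecessary. The cited result already covers every $G$ with some component of at least three edges, which is how the paper states it. If you want to derive the disconnected case from the connected one yourself, replace ``every decomposition must pair them canonically'' by a counting argument, and choose the component carefully:
\begin{enumerate}
\item Take $C$ with the maximum number of edges among the components of $G$, and write $G=kC\cup R$ with no component of $R$ isomorphic to $C$.
\item Pad an instance $D$ with $N\abs{E(C)}$ edges by $(k-1)N$ disjoint copies of $C$ and $N$ disjoint copies of $R$.
\item No copy of $C$ fits into a component of $R$. Hence at least $N$ of the $kN$ copies of $C$ in any $G$-decomposition lie in $D$, and these already exhaust $E(D)$, giving a $C$-decomposition of $D$.
\end{enumerate}
For an arbitrary component with at least three edges, step 3 fails, because copies of $C$ may sit inside larger padding components.

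\textbf{The Bry\'s--Lonc case.} Your account of the case $a\ge 1$ is only a paraphrase. The real difficulty is grouping the $P_3$'s and $K_2$'s into vertex-disjoint copies of $aP_3\cup bK_2$, so that part rests entirely on the citation. That is acceptable here, since the paper does the same.
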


We make use of the $\NP$-completeness result for $G$-decompositions to show $\NP$-completeness for the \probglob{$\calG$} when~$\calG$ is finite and there is a unique edge-maximal graph, and discuss how this result might be generalized (see \cref{sec:finite_global}) and to which extent an analogue might hold in the local setting (see \cref{sec:finite_local}).
Another application of \cref{thm:holyer-np-p} yields an example of a finite, monotone graph class~$\calG$ where the \probloc{$\calG$} is $\NP$-complete, but $\cg{\calG}(H)$ can be determined in constant time for every graph~$H$ (see \cref{sec:global-easy_local-hard}).

\subsection{Global Covers}
\label{sec:finite_global}
Using a simple counting argument \cref{thm:holyer-np-p} directly yields $\NP$-completeness of the \probglob{$\calG$} when~$\calG$ is finite and there is a unique edge-maximal graph.
\begin{theorem}
	\label{thm:fin-glob-maxe-np}
    Let~$\calG$ be a finite graph class with a single graph~$G \in \calG$ with maximum number of edges in~$\calG$.
    If some component of~$G$ has at least three edges, then the \probglob{$\calG$} is $\NP$-complete.
\end{theorem}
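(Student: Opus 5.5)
We reduce from the $G$-decomposition problem, which is $\NP$-complete by \cref{thm:holyer-np-p} since some component of~$G$ has at least three edges. Membership in~$\NP$ follows from \cref{obs:NP-membership}, because membership in the finite class~$\calG$ can be checked in constant time. Let $m = \abs{E(G)}$. Every other graph in~$\calG$ then has strictly fewer than~$m$ edges, since~$G$ is the unique graph in~$\calG$ with the maximum number of edges.

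Given a host graph~$H$ of an instance of the $G$-decomposition problem, we first check that $m$ divides $\abs{E(H)}$; if it does not, we output a trivial no-instance. Otherwise we output the instance $(H, k)$ with $k = \abs{E(H)}/m$ of the \probglob{$\calG$}. The reduction is clearly polynomial.

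For correctness, first suppose $H$ admits a $G$-decomposition. This decomposition consists of exactly $\abs{E(H)}/m = k$ copies of~$G \in \calG$, so it is a $k$-global $\calG$-cover of~$H$.

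Conversely, let $\varphi\colon G_1 \cupdot \dots \cupdot G_t \to H$ be a $\calG$-cover with $t \leq k$. Double counting the edges gives
\[
\abs{E(H)} \leq \sum_{i=1}^t \abs{E(G_i)} \leq t\cdot m \leq k\cdot m = \abs{E(H)} .
\]
Hence all inequalities are equalities. Equality in the middle inequality forces every $G_i$ to have exactly $m$ edges, so by uniqueness $G_i \cong G$ for every~$i$. Equality in the first inequality forces the images $\varphi(G_i)$ to be pairwise edge-disjoint. Together with injectivity, this means the $G_i$ are edge-disjoint copies of~$G$ covering~$H$, that is, a $G$-decomposition of~$H$.

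The only subtle point is that uniqueness of the edge-maximal graph is needed precisely in the equality step. If there were a second graph $G' \in \calG$ with $m$ edges, a tight cover could mix copies of~$G$ and~$G'$, and the argument would break down. Apart from that, the proof is a direct counting argument.
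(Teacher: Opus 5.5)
Your proposal is correct and follows essentially the same approach as the paper: a reduction from the $G$-decomposition problem with $k = \abs{E(H)}/\abs{E(G)}$, together with the counting chain $\abs{E(H)} \leq \sum_i \abs{E(G_i)} \leq k\cdot\abs{E(G)}$, whose tightness forces every guest to be a copy of $G$ and the guests to be edge-disjoint. Your explicit divisibility check, which maps to a trivial no-instance when $\abs{E(G)} \nmid \abs{E(H)}$, is a small detail that the paper leaves implicit.
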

\begin{proof}
    As $\calG$ is finite the \probglob{$\calG$} clearly lies in~$\NP$ by \cref{obs:NP-membership}.
	To show $\NP$-hardness, we reduce from the \probdecomp{$G$} which is $\NP$-complete by \cref{thm:holyer-np-p}.
    It suffices to observe that a graph~$H$ admits a $G$-decomposition if and only if~$H$ has a $k$-global $\calG$-cover where $k = \frac{\abs{E(H)}}{\abs{E(G)}}$. 
    Indeed, every $G$-decomposition is such a cover.
    Conversely, for every $k$-global $\calG$-cover~$\varphi\colon G_1 \cupdot \dots \cupdot G_k \to H$ of~$H$ we have $\abs{E(H)} \leq \sum_{i=1}^k \abs{E(G_i)} \leq k \cdot \abs{E(G)}$
    where equality only holds if each~$G_i$ is a copy of~$G$ and no two copies share an edge.
    That is, $\varphi$ is a $G$-decomposition of~$H$.
\end{proof}

Above theorem yields $\NP$-completeness of the \probglob{$\calG$} for many natural finite graph classes, such as paths, cycles, stars and complete graphs of bounded size.
Yet, can we drop the assumption that there is a unique graph $G \in \calG$ with maximum number of edges?
The condition turns out to be necessary: let~$\ebound{s}$ be the class of graphs with at most~$s$ edges and no isolated vertices.
Clearly, determining the global $\ebound{s}$-covering number~$\cg{\ebound{s}}(H)$ of~$H$ merely consists of counting the number of edges of~$H$.
\begin{proposition}
	\label{thm:all-edge-bounded-global-cover}
	For every $s \in \mathbb{N}$ and every graph~$H$, we have $\cg{\ebound{s}}(H) = \ceil{\frac{\abs{E(H)}}{s}}$.
    In particular, the \probglob{$\ebound{s}$} can be solved in constant time.
\end{proposition}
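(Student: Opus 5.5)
We prove the two inequalities $\cg{\ebound{s}}(H) \geq \ceil{\frac{\abs{E(H)}}{s}}$ and $\cg{\ebound{s}}(H) \leq \ceil{\frac{\abs{E(H)}}{s}}$ separately. The constant-time claim then follows, since the answer depends only on $\abs{E(H)}$ and $s$: given $(H,k)$, we only compare $k$ with $\ceil{\abs{E(H)}/s}$.

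\textbf{Lower bound.} This is a counting argument. Let $\varphi\colon G_1 \cupdot \dots \cupdot G_t \to H$ be any $t$-global $\ebound{s}$-cover. Every edge of $H$ is covered, so
\[\abs{E(H)} \leq \sum_{i=1}^t \abs{E(G_i)} \leq t\cdot s,\]
and integrality of $t$ gives $t \geq \ceil{\abs{E(H)}/s}$.

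\textbf{Upper bound.} We construct an explicit cover.
\begin{itemize}
\item Fix an arbitrary enumeration $e_1, \dots, e_m$ of $E(H)$, where $m = \abs{E(H)}$.
\item Partition it into $\ceil{m/s}$ consecutive blocks, each containing at most $s$ edges.
\item Let $G_j$ be the subgraph of $H$ formed by the edges of block $j$ together with their endpoints only. Then $G_j$ has at most $s$ edges and no isolated vertices, so $G_j \in \ebound{s}$.
\end{itemize}
The inclusion maps of the $G_j$ into $H$ give an injective, edge-surjective homomorphism, i.e.\ a $\ceil{m/s}$-global $\ebound{s}$-cover.

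The only subtlety is the degenerate case $E(H) = \varnothing$. There the empty cover with zero guests is valid, so $\cg{\ebound{s}}(H) = 0 = \ceil{0/s}$, consistent with the convention that guests may be absent. This is also why the definition of $\ebound{s}$ excludes isolated vertices: isolated vertices in $H$ never need to be hit, so they cause no problem. There is no real obstacle; the lower bound is the only step that uses anything, and it is a one-line count.
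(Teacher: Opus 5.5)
Your proof is correct and is exactly the counting argument the paper has in mind. The paper gives no separate proof; it only remarks that $\cg{\ebound{s}}(H)$ is obtained by counting edges. That remark amounts to your lower bound $\abs{E(H)} \leq t\cdot s$ together with your partition of $E(H)$ into $\ceil{\abs{E(H)}/s}$ blocks of at most $s$ edges.

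As a minor aside, your stated reason for excluding isolated vertices from $\ebound{s}$ is off: this is done to keep the guest class finite (the setting of that section), not because of isolated vertices of $H$.
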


We believe that if each graph of a graph class~$\calG$ is connected, then the assumption of a unique edge maximal graph can be dropped in \cref{thm:fin-glob-maxe-np}.
\begin{restatable}{conjecture}{conjFiniteGlobalConnected}
    \label{conj:holyer-glob}
	If~$\calG$ is a finite class of connected graphs such that $\max_{G \in \calG} \abs{E(G)} \geq 3$ then the \probglob{$\calG$} is $\NP$-hard.
\end{restatable}

\subsection{Local Covers}
\label{sec:finite_local}

For the global setting, the \probglob{$\calG$} is $\NP$-complete when there is a unique edge-maximal graph~$G \in \calG$ and one component of~$G$ has at least three edges. 
Does the same hold in the local setting?
The class~$\calS_{\leq d}$ of stars of maximum degree~$d$ provides a counter-example.
\begin{theorem}
	\label{thm:fin-local-bounded-star}
    For an integer~$d \in \N$, the \probloc{$\calS_{\leq d}$} can be decided in polynomial time.
\end{theorem}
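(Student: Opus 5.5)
The plan is to reduce the \probloc{$\calS_{\leq d}$} to finding an orientation of the host graph with prescribed upper bounds on the in-degrees, which is a bipartite $b$-matching (flow) problem solvable in polynomial time.

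\textbf{Step 1: pass to decompositions.} Since $\calS_{\leq d}$ is monotone, deleting edges that are covered several times from all but one guest (and discarding guests that become empty or isolated vertices) never increases hitcounts. So a graph~$H$ has a $k$-local $\calS_{\leq d}$-cover if and only if it has a $k$-local $\calS_{\leq d}$-decomposition. In a decomposition, designate a center for each star; for a single edge $K_2$, either endpoint may be chosen. Orient every edge from the center of its star towards the leaf. This gives an orientation~$D$ of~$H$.

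\textbf{Step 2: express hitcounts through the orientation.} Two edges of one star entering the same vertex would have to share both endpoints, which is impossible. Hence every in-edge of~$v$ in~$D$ comes from a distinct star in which $v$ is a leaf. The out-edges of~$v$ lie in stars centered at~$v$, and each such star has at most~$d$ edges. Therefore
\[\hit{v}{} \geq \indegree_D(v) + \ceil{\tfrac{\deg(v)-\indegree_D(v)}{d}}.\]
Conversely, given any orientation~$D$, greedily group the out-edges of each vertex into $\ceil{\mathrm{out}(v)/d}$ stars of at most $d$ edges each. This yields an $\calS_{\leq d}$-decomposition attaining equality at every vertex. So $H$ is a yes-instance if and only if some orientation satisfies $g_v(\indegree_D(v)) \leq k$ for all~$v$, where $g_v(i) = i + \ceil{(\deg(v)-i)/d}$.

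\textbf{Step 3: turn the condition into an in-degree bound.} Increasing $i$ by one lowers the ceiling term by at most one, so $g_v$ is nondecreasing in~$i$. Thus the condition $g_v(\indegree_D(v)) \leq k$ is equivalent to $\indegree_D(v) \leq f(v)$, where $f(v)$ is the largest $i \in \{0,\dots,\deg(v)\}$ with $g_v(i) \leq k$. If no such $i$ exists, i.e.\ $\ceil{\deg(v)/d} > k$, we answer no. Each $f(v)$ is computable in polynomial time even if $k$ is given in binary; we may also cap $k$ at $\abs{E(H)}$ by \cref{thm:covering-small-witness}.

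\textbf{Step 4: solve the orientation problem.} Deciding whether there is an orientation with $\indegree(v) \leq f(v)$ for every~$v$ is standard. Build the bipartite graph with one side $E(H)$ and the other side $V(H)$, join each edge to its two endpoints, and ask for a $b$-matching saturating $E(H)$ with capacity $f(v)$ at each vertex~$v$. This is a maximum-flow computation and runs in polynomial time.

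\textbf{Main obstacle.} The delicate step is Step~2, in particular justifying the exact formula for the minimal hitcount. One must check that single-edge stars are handled by the free choice of orientation, and that no cleverer arrangement lets a vertex be a leaf of fewer stars than its in-degree. Both points follow from the observation that two edges entering the same vertex cannot belong to the same star.
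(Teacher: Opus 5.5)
Your proposal is correct and is essentially the paper's argument: pass to a decomposition, orient each star consistently, write the hitcount as leaf memberships plus the ceiling of the center edges divided by $d$, and decide whether such a degree-constrained orientation exists by a flow computation. The paper orients towards the center and gets equivalent lower bounds on in-degrees rather than your upper bounds. Your monotonicity formulation of the per-vertex condition also avoids the division by $d-1$ in the paper's closed-form bound, so it covers $d=1$ as well.
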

We show above theorem by adapting a proof given by Knauer and Ueckerdt, who compute local $\calS$-covering numbers for the class~$\calS$ of stars in polynomial time \cite[Theorem~25]{knauer2016threeways}.
\begin{proof}
 It suffices to show that a graph~$H$ satisfies $\cl{\calS_{\leq d}}(H) \leq k$ if and only if there exists an orientation of~$H$ such that $\frac{d \cdot (\deg(v)-k)}{d-1} \leq \indegree(v)$ for every vertex~$v \in V(H)$.
The existence of such an orientation can then be checked with a flow-algorithm in polynomial time, see \cite{frankHowOrientEdges1978}.

Orienting all edges of each guest~$S_i$ of a $k$-local $\calS_{\leq d}$-cover~$\varphi\colon S_1 \cupdot \dots \cupdot S_t \to H$ towards its center yields an orientation of~$H$ (we may assume that~$\varphi$ is a decomposition), see \cref{fig:star-local-orientation}. 
For every vertex $v \in V(H)$ we have
\begin{align*}
\label{eq:star_orientation}
    (\deg(v) - \indegree(v)) + \ceil*{\frac{\indegree(v)}{d}} \leq k
    &\iff (\deg(v) - \indegree(v)) + \frac{\indegree(v)}{d} \leq k \\
    &\iff \frac{d \cdot (\deg(v)-k)}{d-1} \leq \indegree(v) \tag{$\star$}.
\end{align*}
Conversely given an orientation of~$H$ where $\frac{d \cdot (\deg(v)-k)}{d-1} \leq \indegree(v)$ for every vertex~$v \in V(H)$, decomposing the incoming edges at every vertex into $\ceil{\frac{\indegree(v)}{d}}$ stars yields a (not necessarily unique) $k$-local $\calS_{\leq d}$-cover of~$H$ by \eqref{eq:star_orientation}.
\begin{figure}
    \centering
    \includegraphics[page=1]{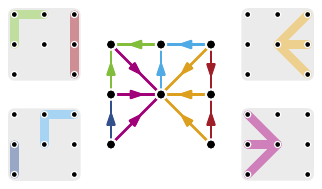}
    \caption{A $3$-local $\calS_{\leq 3}$-cover of a graph~$H$ and its corresponding orientation.}
    \label{fig:star-local-orientation}
\end{figure}
\end{proof}

While the \probloc{$\calG$} may be solvable in polynomial time even when some components of graphs in~$\calG$ contain many edges, we believe that if~$\calG$ consists of a single graph a local analogue of Holyer's conjecture may hold. 
\begin{restatable}{question}{conjFiniteHolyerLocal}
	\label{conj:holyer-local}
	Let $\calG = \set{G}$ be a graph class consisting of a single graph.
	\begin{enumerate}[label=\enumstyle{(\roman*)}, ref=\roman*]
		\item\label{itm:holyer-local-paths} Has the \probloc{$\calG$} a polynomial time solution if each component of~$G$ consists of at most two edges?
		\item Is the \probloc{$\calG$} otherwise $\NP$-complete?
	\end{enumerate}
\end{restatable}

\subsection{Computing Local Covering Numbers Can Be \texorpdfstring{$\bm{\NP}$}{NP}-Hard While Computing Global Covering Numbers Only Needs Constant Time}
\label{sec:global-easy_local-hard}
In general it seems that global covering problems are at least as hard as their respective local variant.
Indeed, this is the case for $\calG$-covering problems where $\calG$ is the class of bipartite graphs, complete bipartite graphs, cliques, interval graphs, forests and star forests respectively, see \cref{tab:overview} for an overview. 
Knauer and Ueckerdt ask whether in general there exists a graph class~$\calH$ such that the local $\calG$-covering number~$\cl{\calG}(H)$ can be computed in polynomial time for every graph~$H \in \calH$ while computing global $\calG$-covering numbers is $\NP$-hard for the class~$\calH$ \cite[Question~26]{knauer2016threeways}.
This turns out to be the case.
Stumpf provides examples of 
\begin{itemize}
    \item graph classes~$\calH$ and~$\calG$ such that it is undecidable whether~$\cl{\calG}(H) \leq k$ for $H \in \calH$ while the global $\calG$-covering numbers~$\cl{\calG}(H)$ can be computed in constant time \cite[p.\,38-40]{stumpf2015CoveringNumbersDifferent}, and
    \item graph classes~$\calH$ and~$\calG$ such that determining whether~$\cg{\calG}(H) \leq k$ for $H \in \calH$ is $\NP$-complete while~$\cg{\calG}(H)$ can be computed in constant time \cite[Section~6.2]{stumpf2015CoveringNumbersDifferent}.
\end{itemize}
In the first case, neither the guest class~$\calG$ nor the host class~$\calH$ are natural.
In the latter case, the guest class~$\calG$ they consider is the class~$\calI$ of all interval graphs, yet their construction heavily relies upon restricting the host graphs~$\calH$ (in particular to compute~$\cl{\calG}(H)$ in polynomial time):
the host class they consider consists of disjoint unions of graphs~$H \cupdot L_{k+1}$ where $L_{k+1}$ is a fixed graph with $\cl{\calG}(L_{k+1}) = 2$ and $\cg{\calG}(L_{k+1}) = k+1$.

We provide an example where both the host class~$\calH$ and guest class~$\calG$ are natural graph classes, more precisely $\calH$ consists of all graphs and~$\calG$ is monotone and finite. 

\thmGlobalEasyLocalHard*
\begin{proof}
    Note that \eqref{itm:global-easy} is a special case of \cref{thm:all-edge-bounded-global-cover}.
    To prove \eqref{itm:local-hard}, first observe that the \probloc{$\etri$} lies in~$\NP$ by \cref{obs:NP-membership}. 
    To show $\NP$-hardness, we reduce from the \probdecomp{$K_3$} which is $\NP$-hard by \cref{thm:holyer-np-p}.
    Let~$D$ be a graph of maximum degree~$\Delta(D)$ that is an instance of the decomposition problem.
    We may assume that all vertex degrees of~$D$ are even as otherwise $D$ admits no $K_3$-decomposition.
    Let~$H$ be the graph obtained from~$D$ by adding $3 \cdot \frac{(\Delta(D) - \deg_D(v))}{2}$ leaves to every vertex~$v \in V(D)$ and set $k = \frac{\Delta(D)}{2}$.
    We call the edges~$E(H)-E(D)$ \emph{new} and all other edges of~$H$ \emph{old}.
    It suffices to prove that~$D$ admits a $K_3$-decomposition if and only if~$\cl{\etri}(H) \leq k$.

    If~$D$ admits a $K_3$-decomposition, we can cover the old edges of~$H$ with triangles such that each vertex~$v \in V(H)$ is hit $\frac{\deg_D(v)}{2}$ times. 
    For each vertex~$v \in V(H)$, the remaining $3 \cdot \frac{(\Delta(D) - \deg_D(v))}{2}$ incident edges can be partitioned into $\frac{(\Delta(D) - \deg_D(v))}{2}$ stars on three edges.
    That is, we obtain an $\etri$-cover of~$H$ where every vertex of~$D$ is hit $k = \frac{\Delta(D)}{2}$ times and all other vertices once, i.e., $\cl{\etri}(H) \leq k$.

    Now suppose~$H$ admits a $k$-local $\etri$-cover $\varphi\colon G_1 \cupdot \dots \cupdot G_t \to H$. 
    We may assume that each~$G_i$ covers at least one edge of~$H$.
    In order to prove that the restriction of~$\varphi$ to~$D$ is a $K_3$-decomposition we assign weights~$w(G_i)$ and costs~$c(G_i)$ to the graphs~$G_i$ such that the larger the weight~$w(G_i)$, the larger the portion of~$D$ covered by~$G_i$, and the larger the cost~$c(G_i)$ the more of the vertices of~$D$ are hit by~$G_i$.
    Old edges~$e \in E(D)$ have weight~$w(e)=1$, new edges~$e \in E(H-D)$ a weight of~$w(e) = \frac{1}{3}$. 
    The cost~$c(G_i)$ of a graph~$G_i$ corresponds to the number of vertices of~$D$ hit by~$G_i$ and its weight~$w(G_i)$ to the sum of edge weights it covers, i.e., $c(G_i) = \abs{V(D) \cap V(\varphi(G_i))}$, and $w(G_i) = \sum_{e \in E(G_i)} w(\varphi(e))$. 
    As~$\varphi$ is a $k$-local cover of~$H$, we have
    \begin{align*}
    \label{eq:weight_cost_local_hard}
		\sum_{i \in [t]} w(G_i) &\geq \sum_{e \in E(H)} w(e) = \abs{E(D)} + \frac{1}{3}\cdot \abs{E(H) - E(D)} \\
        &= \sum_{v \in V(D)}\frac{\deg_D(v)}{2} + \sum_{v \in V(D)} \frac{\Delta(D) - \deg_D(v)}{2} 
        = k \cdot \abs{V(D)} \geq \sum_{i \in [t]} c(G_i) \tag{$\star$}.
	\end{align*}

    We show that 
    \begin{enumerate}[label=\enumstyle{(\roman*)}, ref=\roman*, start=3]
        \item\label{itm:cost-weight-equality}  $c(G_i) \geq w(G_i)$ for every~$i$ and that equality only holds when $G_i$ is a triangle of old edges or a star on three new edges.
    \end{enumerate}
    It then follows with \eqref{eq:weight_cost_local_hard} that each edge of~$H$ is covered exactly once and that $c(G_i) = w(G_i)$ for every $i$.
    In particular the old edges are decomposed into triangles, that is $\varphi$ induces a $K_3$-decomposition of~$D$.

    It remains to prove \eqref{itm:cost-weight-equality}.
    Consider a graph~$G_i$ with $i \in [t]$.
	\begin{itemize}
		\item If $c(G_i) = 0$, then $G_i$ does not cover any edge of $H$ since every edge of $H$ has an endpoint in $D$, a contradiction.
		\item If $c(G_i) = 1$, then $G_i$ contains only one vertex of $D$. Thus, $G_i$ only covers new edges, each of which has weight~$\frac{1}{3}$. Since $\abs{E(G_i)} \leq 3$ we obtain $w(G_i) \leq c(G_i)$ where equality holds if and only if $G_i$ is a star on three new edges.
		\item If $c(G_i) = 2$, then $G_i$ contains two vertices of $D$ and thus at most one edge of $E(D)$, i.e., $w(G) \leq 1 + \frac{2}{3} < 2 = c(G)$.
		\item If $c(G_i) \geq 3$, we have $w(G_i) \leq c(G_i)$ since $w(G_i) \leq 3$. Equality holds if and only if $G_i$ consists of three old edges.  In this case, $G$ is a $K_3$ since this is the only graph on three vertices with three edges.\qedhere
	\end{itemize}
\end{proof}

\section{Discussion}

Within this work, we provide a lower bound on the local $\calB$-covering number in terms of the fractional chromatic number (where $\calB$ denotes the class of all bipartite graphs).
In \cref{discussion:fractional_chromatic} we discuss open questions with regard to the relationship between these two parameters. 
Further, we show $\NP$-completeness of the \textsf{local}-$\calB$- and $\calBc$-\textsf{covering} problem (where $\calBc$ denotes the class of all complete bipartite graphs), yet the complexity of the corresponding \textsf{local}-$k$-$\calB$- and $k$-$\calBc$-\textsf{covering} problems remains open for most $k$, see \cref{discussion:strong_np} for a discussion.
Finally, we provide a finite graph class~$\calG$ such that the \probloc{$\calG$} is $\NP$-complete while the global $\calG$-covering number can be determined in constant time.
When $\calG$ is finite, the covering problem is closely related to the $G$-decomposition problem. 
We discuss how results from the field of $G$-decompositions might translate to global and local covers with finite guest classes~$\calG$ in \cref{discussion:finite_guest_class}.

\subsection{An upper bound on the local bipartite covering number?}
\label{discussion:fractional_chromatic}
We provide a logarithmic lower bound on the local $\calB$-covering number~$\cl{\calB}$ (where~$\calB$ is the class of all bipartite graphs) in terms of the fractional chromatic number~$\chif$, see \cref{thm:fractional-chromatic-cover-bip}.
Yet, can $\cl{\calB}$ also be bounded from above in terms of~$\chif$?
While the global $\calB$-covering number is tied to the chromatic number (\cref{lem:cover_with_log_chi_bipartite_graphs}), we do not believe the same holds for the local variant and the fractional chromatic number.
\begin{question}
\label{conj:local-bip-upper-bound}
    \begin{enumerate}[label=\enumstyle{(\roman*)}, ref=\roman*]
    \item\label{conj:fract-chrom-unbounded-bip} Is there a function~$f$ for the graph class~$\calB$ of all bipartite graphs such that~$\cl{\bip}(H) \leq f(\chif(H))$ for every graph~$H$?
    \item\label{conj:fract-chrom-unbounded} Is there for every graph class~$\calF_r = \{G \mid \chif(G) \leq r\}$ with~$r \in \mathbb{R}_{\geq 2}$ a function~$f_r$ such that~$\cl{\calF_r}(H) \leq f_r(\chif(H))$ for every graph~$H$?
    \end{enumerate}
\end{question}

In fact, \cref{conj:local-bip-upper-bound}\,\eqref{conj:fract-chrom-unbounded-bip} is a special case of \eqref{conj:fract-chrom-unbounded} as $\calF_2 = \calB$ since odd cycles have fractional chromatic number greater than~$2$.
Indeed, any~$\by{2b}{b}$-coloring corresponds to a proper $2$-vertex-coloring as all neighbors of each vertex receive the same color set.

In order to answer \cref{conj:local-bip-upper-bound} in the negative, it suffices to prove that there exists a graph class~$\calH$ of bounded fractional chromatic number with $\cl{\calB}(\calH) = \infty$.
Note that such a family in particular needs to satisfy $\chi(\calH) = \infty$ as $\cl{\calB}(H) \leq \cg{\calB}(H) = \log(\chi(H))$ for every graph~$H$ (cf. \cref{lem:cover_with_log_chi_bipartite_graphs}).
We believe that a subfamily of \emph{Kneser graphs} provides such an example. 
The \emph{Kneser graph}~$\kneser{a}{b}$ is the graph whose vertex set consists of all $b$-element-subsets of $[a]$ and where two vertices are connected if their subsets are disjoint.
Note that~$\kneser{a}{b}$ has no edges if $a < 2b$.
Kneser graphs appear naturally in the study of fractional colorings and play a similar role to complete graphs with respect to the chromatic number:
the Kneser graph~$\kneser{a}{b}$ with $a \geq 2b$ satisfies~$\chif(\kneser{a}{b}) = \frac{a}{b}$ (and also $\chi(\kneser{a}{b}) = a-2b+2$)  \cite[Proposition~3.2.4 and~3.2.6]{scheinermanFractionalGraphTheory2011}.
In fact, for every graph~$H$ with~$\chif(H) \leq \frac{a}{b}$ there exists a graph homomorphism $\gamma \colon H \to \kneser{a}{b}$ (yet $\gamma$ might not be injective) \cite[Proposition~3.2.4]{scheinermanFractionalGraphTheory2011}.
We believe that there exists some~$s \in \mathbb{R}$ such that the graph class~$\calH_s = \set{\kneser{a}{b} \given \frac{a}{b} \leq s}$ satisfies~$\cl{\bip}(\calH_s) = \infty$, giving a negative answer to \eqref{conj:fract-chrom-unbounded-bip} and further that \eqref{conj:fract-chrom-unbounded} can be answered in a similar way.

\subsection{Strong \texorpdfstring{$\bm{\NP}$}{NP}-completeness}
\label{discussion:strong_np}
The $k$-\probloc{$\calB$} for the class~$\calB$ of all bipartite graphs is clearly in~$\P$ for~$k=1$ as it suffices to check whether the input is a bipartite graph.
For~$k=2$, we show $\NP$-completeness (cf. \cref{thm:local-bip-np}), yet the problem remains open for all $k \geq 3$.
\begin{question}
\label{question:local_bip_NP}
    Is the $k$-\probloc{$\calB$} $\NP$-complete for every $k \geq 2$?
\end{question}
We believe this to be true.
As we have $\NP$-completeness for~$k=2$, any polynomial-size construction which increases the local $\calB$-covering number by one would provide a positive answer to \cref{question:local_bip_NP}.
The \emph{join}~$H \join H$ of a graph~$H$ with itself (that is the graph consisting of two copies~$H',H''$ of~$H$ where all vertices of~$H'$ are connected to all vertices of~$H''$) might provide such a construction.

Similarly, while we show $\NP$-completeness of the \probloc{$\calBc$} for the class~$\calBc$ of all complete bipartite graphs (cf. \cref{thm:local-bc-np}), the complexity of the $k$-\probloc{$\calBc$} remains open for every $k \geq 2$.
\begin{question}
\label{question:local_bc_NP}
    Is the $k$-\probloc{$\calBc$} $\NP$-complete for every $k \geq 2$?
\end{question}
Note that it suffices to prove $\NP$-completeness for~$k=2$ by \cref{cor:k-local-Bc-hard-then-k+1}.

\subsection{Complexity of the global and local \texorpdfstring{$\bm{\calG}$}{G}-covering problem for finite graph classes~\texorpdfstring{$\bm{\calG}$}{G}}
\label{discussion:finite_guest_class}

Holyer conjectured that the $G$-decomposition problem is $\NP$-complete if~$G$ consists of at least three edges \cite{holyerNPCompletenessEdgePartitionProblems1981}.
While the statement does not hold in its entirety, a small adaptation turns out to be true.
\holyerConj*

Similarly, if there is a unique edge-maximal graph in~$\calG$, the \probglob{$\calG$} is $\NP$-complete (cf. \cref{thm:fin-glob-maxe-np}).
Yet, the condition of a single edge-maximal graph cannot be dropped. 
Indeed, the \probglob{$\ebound{s}$} is clearly solvable in polynomial time for the graph class~$\ebound{s}$ consisting of all graphs on at most $s$~edges (cf. \cref{thm:all-edge-bounded-global-cover}).
However, many graphs in~$\ebound{s}$ are disconnected.

\conjFiniteGlobalConnected*

On the contrary, the \probloc{$\calG$} may be solvable in polynomial time even when some components of graphs in~$\calG$ contain many edges: the \probloc{$\calS_{\leq d}$} is solvable in polynomial time for the class~$\calS_{\leq d}$ of all stars with at most $d$~edges (cf. \cref{thm:fin-local-bounded-star}).
Yet, does a local analogue to Holyer's conjecture hold if~$\calG$ consists of a single graph? 
\conjFiniteHolyerLocal*
Schwebler shows this to be true for connected $r$-regular graphs~$G$ on at least three edges and for~$G$ being a star with at least three leaves \cite[Theorem~5.13 and~5.15]{schweblerGraphCoveringAlgorithnms2026}.
However, in particular the case where $G=K_{1,2}$ (i.e., $G$ corresponds to a path on three vertices) remains open, even though we know the \probloc{$\calG$} to be solvable in polynomial time for~$\calG = \set{K_2, K_{1,2}}$ by \cref{thm:fin-local-bounded-star}.

A full answer to \cref{conj:holyer-local} seems quite difficult since the proof of \cref{thm:holyer-np-p} is very technical and the ideas might not be directly applicable to local covers.
In the global setting, all $G$-decompositions are equally good since they require the same number of copies of~$G$.
However, this is not true in the local setting: it matters how often individual vertices are hit.
Thus, a modification of the ideas used to prove the $\NP$-completeness result of \cref{thm:holyer-np-p} presented in \cite{dorGraphDecompositionNPComplete1997} might require rebuilding some of the machinery.
In particular Dor and Tarsi make use of Wilson's theorem, which provides a complete characterization of graphs~$G$ such that~$K_n$ admits a $G$-decomposition for large enough~$n$.
\begin{theorem}[Wilson's theorem \cite{wilsonDecomposition1976}]
    For every graph~$G$ and large enough~$n$, the complete graph~$K_n$ admits a $G$-decomposition if and only if
	\begin{enumerate}[label=\enumstyle{(\roman*)}, ref=\roman*]
		\item $\abs{E(G)}$ divides $\abs{E(K_n)}$, and
		\item the greatest common divisor of the values~$\deg_G(v)$ with $v \in V(G)$ divides $(n-1)$.
	\end{enumerate}
\end{theorem}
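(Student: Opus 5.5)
The statement splits into the easy necessity of the two divisibility conditions and the hard sufficiency for large~$n$. We do not intend to reprove Wilson's theorem from scratch inside this paper. The plan below is the route we would follow; the sufficiency part reproduces a known, long argument.

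\emph{Necessity.} Suppose $K_n$ admits a $G$-decomposition into copies $G_1, \dots, G_m$. Since the copies are edge-disjoint and cover $E(K_n)$, we get $\abs{E(K_n)} = m\abs{E(G)}$, which is condition (i). For condition (ii), fix a vertex $v$ of $K_n$. Every copy $G_j$ containing $v$ contributes $\deg_G(u)$ edges at $v$, where $u$ is the preimage of $v$ in $G_j$. These contributions partition the $n-1$ edges at $v$. Hence $n-1$ is a sum of vertex degrees of $G$, and so it is divisible by $\gcd\{\deg_G(u) \mid u \in V(G)\}$.

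\emph{Sufficiency.} We would follow the iterative absorption method (Glock, Kühn, Lo, Osthus), rather than Wilson's original route via PBD-closed sets and designs.

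First, \emph{absorbers.} For every $G$-divisible graph $R$ (edge count divisible by $\abs{E(G)}$ and all degrees divisible by the gcd), we build a graph $A_R$ of bounded size such that both $A_R$ and $A_R \cup R$ admit $G$-decompositions. We would do this by composing "transformers" that exchange $R$ for a canonical disjoint union of small cycles or gadgets, and then for copies of $G$. This is where the gcd hypothesis enters.

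Second, we reserve a sparse random collection of vertex-disjoint absorbers inside $K_n$, one for each possible small leftover configuration on a designated vertex set. We also fix a nested vortex $V(K_n) = U_0 \supseteq U_1 \supseteq \dots \supseteq U_\ell$, with $\abs{U_\ell}$ bounded.

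Third, \emph{approximate decomposition and cover-down.} At each level we use a Rödl-nibble / Frankl--Rödl approximate decomposition of the dense quasirandom remainder. We combine it with a cover-down lemma: all leftover edges not inside $U_{i+1}$ get covered using copies of $G$ that meet $U_{i+1}$ in few vertices. This keeps the remainder quasirandom at the next level.

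Finally, the leftover graph $R$ lies inside $U_\ell$. It is $G$-divisible, since $K_n$ satisfies (i) and (ii) and we removed only copies of $G$. So $R$ is one of the boundedly many configurations, and it is absorbed by its reserved absorber.

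The main obstacle is the cover-down lemma together with the absorber construction. Guaranteeing that every leftover is exactly $G$-divisible, while the degree gcd (rather than $\abs{E(G)}$) controls the local conditions, is precisely where the large-$n$ assumption and (ii) are used. For the absorbers we would first try to reduce a general divisible $R$ to a union of edge-disjoint copies of a fixed "$\gcd$-cycle" gadget, via switching arguments.
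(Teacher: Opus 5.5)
The paper does not prove this theorem. It cites Wilson and only remarks that necessity is easy and sufficiency highly non-trivial.

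Your necessity argument is complete and is exactly that easy direction. Condition (i) is the edge count. For (ii), the copies through a vertex $v$ partition its $n-1$ incident edges into blocks of sizes $\deg_G(u)$, so $\gcd_{u\in V(G)}\deg_G(u)$ divides $n-1$.

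For sufficiency you, like the paper, are really pointing to the literature, but to a different proof. Wilson's argument rests on his existence theory for pairwise balanced designs (PBD-closure plus explicit algebraic constructions). You instead sketch the iterative-absorption proof (Barber--K\"uhn--Lo--Osthus for graphs, Glock--K\"uhn--Lo--Osthus in general). That route buys a stronger result: it decomposes every sufficiently large $G$-divisible host with minimum degree at least $(1-c_G)n$ for some $c_G>0$, not only $K_n$. Its vertex-by-vertex handling of divisibility is also arguably closer in spirit to the hitcount-sensitive strengthening raised in the paper's closing discussion. Citing Wilson buys brevity, which suffices here since the theorem only serves as motivation.

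Two steps of your outline are, however, stated incorrectly and would fail as written.

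\emph{The absorbers cannot be pairwise vertex-disjoint.} Take $G$ connected and a $G$-divisible $R\subseteq K_n[U_\ell]$ that is not itself $G$-decomposable. Then some copy in the decomposition of $A_R\cup R$ contains both an edge of $R$ and an edge of $A_R$. Being connected, that copy does so at a common vertex, so $A_R$ meets $V(R)\subseteq U_\ell$. There are far more than $|U_\ell|$ such configurations; for $G=K_3$ the $6$-cycles in $K_n[U_\ell]$ alone suffice. Hence their absorbers must share vertices. What you need is a family of pairwise \emph{edge}-disjoint absorbers avoiding $E(K_n[U_\ell])$. This is achievable because there are only boundedly many configurations, each of bounded size.

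\emph{Your cover-down step is inverted.} At level $i$, every edge of $K_n[U_i]$ not inside $U_{i+1}$ must be covered exactly, so the only slack is the edge set inside $U_{i+1}$. Copies meeting $U_{i+1}$ in few vertices would consume edges outside $U_{i+1}$, where there is no slack. The correct scheme has two steps:
\begin{itemize}
\item Each leftover edge with both ends in $U_i\setminus U_{i+1}$ is covered by a copy of $G$ whose other vertices all lie in $U_{i+1}$.
\item The remaining edges from each $x\in U_i\setminus U_{i+1}$ into $U_{i+1}$ are then covered by copies in which $x$ is the only vertex outside $U_{i+1}$.
\end{itemize}
What must stay small is the maximum degree of the set of used edges inside $U_{i+1}$, not the number of vertices a copy has there.

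The second step is where (ii) genuinely enters. The number of remaining edges at $x$ is a multiple of $\gcd_{u}\deg_G(u)$, because everything removed so far is $G$-decomposable. Once this number is guaranteed to be large, it is a nonnegative integer combination of the degrees $\deg_G(u)$, and that combination prescribes the roles $x$ plays in these copies.
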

It is easy to see that these two conditions are necessary, but it is highly non-trivial that they are sufficient when~$n$ is large enough.
To show $\NP$-completeness of the $G$-decomposition problem, Dor and Tarsi use Wilson's theorem to construct gadgets \cite{dorGraphDecompositionNPComplete1997}.
Since Wilson's theorem does not guarantee the existence of a $G$-decomposition hitting each vertex of $K_n$ equally often, a modification of their proof idea to  local $G$-covers might require a stronger version of Wilson's theorem.

\bibliographystyle{plainurl}
\bibliography{references}
\end{document}